\documentclass[11pt, twoside]{amsart}
\usepackage{amssymb,amsmath,amsfonts,amsthm}
\usepackage{mathrsfs}
\usepackage{bbm}
\usepackage[normalem]{ulem}
\usepackage[colorlinks, citecolor=blue,pagebackref,hypertexnames=false]{hyperref}

\usepackage{tikz}
\usetikzlibrary{arrows.meta,positioning,decorations.pathreplacing}
\usepackage{xcolor}
\usepackage{pgfplots}
\usetikzlibrary{3d, calc}
\pgfplotsset{compat=1.18}

\allowdisplaybreaks
\theoremstyle{plain}
\newtheorem{thm}{Theorem}[section]
\newtheorem{prop}[thm]{Proposition}
\newtheorem{lem}[thm]{Lemma}
\newtheorem{cor}[thm]{Corollary}

\theoremstyle{definition}
\newtheorem{defn}[thm]{Definition}
\newtheorem{remark}[thm]{Remark}

\begin{document}

\title[Haar bases for multi-parameter twisted structures]{\bf\Large  Haar bases for multi-parameter twisted structures}
 \author{Ji Li}
\address{Ji Li, School of Mathematical and Physical Sciences, Macquarie University, NSW 2109, Australia}
\email{ji.li@mq.edu.au}

\author{Chong-Wei Liang}
\address{Chong-Wei Liang, Department of Mathematics, National Taiwan University, Taiwan}
\email{d10221001@ntu.edu,tw}

\author{Brett D. Wick}
\address{Brett D. Wick, Department of Mathematics, Washington University - St. Louis, St. Louis, MO 63130-4899 USA}
\email{wick@math.wustl.edu}

\author{Liangchuan Wu}
\address{Liangchuan Wu, School of Mathematical Science, Anhui University, Hefei, 230601,  P.R.~China}
\email{wuliangchuan@ahu.edu.cn}

\author{Qingyan Wu}
\address{Qingyan Wu, School of Mathematics and Statistics, Linyi University, Linyi 276000,   P.R.~China}
\email{wuqingyan@lyu.edu.cn}
\date{\today}

\begin{abstract}
Motivated by the Cauchy--Szeg\H{o} projections on a broad class of Siegel domains and the geometric quotient structures of nilpotent Lie groups observed by Nagel, Ricci, and Stein, we develop a martingale and Haar wavelet framework for twisted multi-parameter geometries. We introduce twisted dyadic filtrations and construct adapted Haar bases on Euclidean spaces $\mathbb{R}^{2m}$. 
Each of the resulting dyadic systems forms a complete orthonormal basis of $L^2(\mathbb R^{2m})$, and their union yields a tight frame with frame bound $3$.
We establish $L^p$-equivalences for the associated discrete twisted Littlewood--Paley square functions. 

Furthermore, we extend this discrete real-variable theory to the non-abelian setting of a nilpotent Lie group of step two, $\mathscr{N}$, which serves as the Shilov boundary of certain fundamental Siegel domains. By projecting product fractal tiles from a lifting group of Heisenberg products, we define twisted dyadic shards and construct twisted nilpotent Haar  frames.  More precisely, we first introduce raw projected shards that reflect the quotient geometry, and then pass to analytic dyadic shards which are exactly rectifiable and remain uniformly comparable to the raw quotient structure in the relevant scale regimes.  
This yields a discrete framework adapted to twisted quotient geometries in both the Euclidean and nilpotent settings, providing a basic dyadic infrastructure for further developments in twisted real-variable theory.

\end{abstract}

\subjclass[2020]{42B20, 42B25, 43A80}
\keywords{twisted multiparameter structures, Haar bases, dyadic filtrations, quotient geometry, Heisenberg tiles}
\maketitle

\section{Introduction and Definitions}
\setcounter{equation}{0}

The development of multi-parameter harmonic analysis has historically been driven by the geometric complexities of boundary value problems in several complex variables. While classical one-parameter Calder\'on-Zygmund theory provides the necessary tools for strictly pseudoconvex domains, domains with product structures require multi-parameter frameworks. The classical product theory on $\mathbb{R}^n$, pioneered by S.-Y. A. Chang, R. Fefferman, Gundy, Journ\'e, Pipher, and Stein \cite{CF80, CF85, Feff-S, J, P, St98}, successfully characterize operators with independent scaling parameters. Subsequently, the study of the $\bar{\partial}$-Neumann problem on the Heisenberg group led to the discovery of multiparameter flag structures, introduced by M\"uller, Ricci, and Stein \cite{MRS} and extensively developed in recent decades \cite{CCLLO, HLLW, NRSW1, NRSW2}.

Despite these successes, a significant geometric and analytic challenge remains when passing to quotient spaces. As acutely observed by Nagel, Ricci, and Stein \cite{NRS}, the class of standard product operators is not closed under passage to a quotient subgroup. A quintessential example is the Cauchy--Szeg\H{o} projection on a wide class of quadratic surfaces of higher codimension in $\mathbb{C}^n$. The underlying geometry is naturally modeled by appropriate quotients of products of Heisenberg groups. However, projecting a well-behaved product kernel down to the quotient space couples the independent singularities, creating a highly complex, ``twisted'' singular integral kernel that is singular along intersecting hyperplanes. These operators explicitly fall outside the scope of established product and flag theories.

In recent related works \cite{FLLWW,LLWW}, continuous real-variable methods for these twisted multiparameter singular integrals have been initiated, including the development of continuous tube maximal functions, continuous Littlewood--Paley area functions, and atomic decompositions for twisted Hardy spaces on $\mathbb{R}^{2m}$. However, a discrete framework, specifically a dyadic martingale theory and an adapted Haar wavelet basis, has not yet been provided.  In more classical settings, dyadic Haar systems and martingale structures are by now well understood on spaces of homogeneous type and on product spaces thereof; see, for instance, \cite{KLPW}. 

Discrete dyadic structures are highly desirable in harmonic analysis. By constructing exact orthonormal bases   for the individual dyadic systems, and hence tight frames from their unions,  we can isolate orthogonal oscillations at every scale and location, effectively bypassing the technical complexities inherent in continuous singular integral bounds.


 The present work is not merely a dyadic counterpart of the continuous twisted theory. In the classical one-parameter and product settings, dyadic filtrations, martingale differences, and Haar expansions play a basic role in square function estimates, $H^1$--BMO theory, paraproducts, distribution inequalities, and endpoint arguments; see, for example, \cite{KLPW} and the references therein. In the twisted quotient setting, however, such a dyadic structure is not available from the outset, because the quotient procedure couples the originally independent singular directions and breaks the standard product geometry. The constructions in this paper provide an exact dyadic model for this setting: they recover orthogonal and martingale structures at the discrete level, while remaining comparable to the underlying quotient tube geometry. In this way, the twisted Haar systems and dyadic shards developed here furnish a natural discrete framework for twisted multi-parameter quotient spaces.

The primary purpose of this paper is to construct these twisted dyadic multi-parameter filtrations and their corresponding Haar bases, bridging the gap between the discrete martingale theory and the continuous twisted quotient structures. We accomplish this in two major settings:
\smallskip

{(1). The Euclidean twisted structure, $\mathbb{R}^{2m}$:}\\ We analyze the geometric structure induced by the quotient map $\pi: \mathbb{R}^{3m} \to \mathbb{R}^{2m}$, defined by $$\pi(x_1, x_2, x_3) = (x_1+x_3, x_2+x_3).$$
To capture the coupling of the parameters, we classify the resulting dyadic tubes into distinct geometric systems generated by  the volume-preserving affine shears ($T_2$ and $T_3$). By pulling back the standard tensor-product Haar basis through these transformations, we construct complete orthonormal \textit{twisted Haar bases} that encode the directional cancellations along the coordinate axes and the diagonal.  We prove that the union of these systems forms a tight frame for $L^2(\mathbb{R}^{2m})$ with frame bound $3$
 and establish the twisted Burkholder--Davis--Gundy inequalities and dyadic Littlewood--Paley $L^p$-equivalences through the corresponding martingale projections.

\smallskip

{(2). The nilpotent twisted structure, $\mathscr{N}$:}\\  We subsequently lift this discrete framework to the non-abelian setting.\\Consider the nilpotent Lie group $\mathscr{N}  \cong\mathbb{C}^{n_1+n_2+n_3} \times \mathbb{R}^2$ of step two, which acts as the quotient of the lifting group $\tilde{\mathscr{N}} = \mathscr{H}_1 \times \mathscr{H}_2 \times \mathscr{H}_3$, the product of three Heisenberg groups. Because standard dyadic grids do not exist on Heisenberg groups, we rely on fractal tilings \cite{St, Ty}. By projecting the product fractal tiles from $\tilde{\mathscr{N}}$ down to $\mathscr{N}$, we define \textit{twisted dyadic shards}.  More precisely, we first define the raw projected shards arising from the quotient of products of stacked Heisenberg tiles, and then introduce analytic dyadic shards obtained from exact product structures or their inverse images under measure-preserving shears. We then construct the twisted nilpotent filtrations and unitary pullbacks to establish the twisted nilpotent Haar tight frames, proving that the discrete dyadic square functions on the quotient group $\mathscr{N}$ characterize $L^p(\mathscr{N})$ for all $1 < p < \infty$.

\medskip

The paper is organized as follows.  In Section 2, we introduce the geometric affine shears on $\mathbb{R}^{2m}$, construct the twisted Haar bases, and prove the corresponding tight frame representation with frame bound $3$. In Section 3, we define the twisted multi-parameter filtrations and prove the discrete martingale properties, including the $L^p$-boundedness of the twisted martingale square functions.  Section 4 establishes the twisted dyadic Littlewood--Paley equivalences utilizing the unitary intertwining properties. Finally, in Sections 5 and 6, we transition to the nilpotent Lie group $\mathscr{N}$, introducing the twisted dyadic shards, the non-abelian twisted filtrations, and rigorously establishing the twisted nilpotent Haar tight frames and square function equivalences.

\section{Haar basis on twisted dyadic tubes and tight frame representation}
\setcounter{equation}{0}

We first recall the underlying geometric structure induced by the quotient map  considered in \cite{FLLWW}. Let $\pi: \mathbb{R}^m \times \mathbb{R}^m \times \mathbb{R}^m \to \mathbb{R}^m \times \mathbb{R}^m$ be the projection defined by $$\pi(x_1, x_2, x_3) = (x_1+x_3, x_2+x_3).$$

 In the continuous twisted theory, the quotient geometry is described in terms of five types of dyadic tubes. For the discrete Haar theory developed here, it is convenient to regroup these geometric families into three  dyadic systems:
$$
I\times J,\qquad I\times_t J,\qquad I\,{}_t\!\times J,
$$
where
$$
I\,{\times_t}\, J:=\{(x_1,x_2)\in \mathbb R^{2m}:x_1-x_2\in I,\ x_2\in J\},
$$
and
$$
I  \,{{}_t \times}\, J:=\{(x_1,x_2)\in \mathbb R^{2m}:x_1\in I,\ x_2-x_1\in J\}.
$$
These systems are rectified by the linear measure-preserving shears
\begin{equation}\label{affine}
T_2(x_1,x_2):=(x_1-x_2,x_2) \quad\text{and}\quad 
T_3(x_1,x_2):=(x_1,x_2-x_1).
\end{equation}
The two identities
$$
T_2^{-1}(I\times J)=I{\times_t} \,J\quad\text{and}\quad
T_3^{-1}(I\times J)=I\,{{}_t \times} \,J
$$
show that the slanted dyadic systems are exact pullbacks of the standard product dyadic system.

\begin{figure}[htbp]
    \centering
    \begin{tikzpicture}[
        scale=1.1,
        gridline/.style={gray, very thin},
        axis/.style={->, thick, black},
        basetile/.style={fill=blue, opacity=0.3, draw=blue, thick},
        transtile/.style={fill=red, opacity=0.3, draw=red, thick}
    ]
    \begin{scope}[shift={(0,0)}]
        \begin{scope}
            \clip (-2.5, -2.5) rectangle (2.5, 2.5);
            
            \draw[step=1cm, gridline] (-3,-3) grid (3,3);
            
            \filldraw[basetile] (0,0) rectangle (1,1);
            
            \filldraw[transtile] (-1,-2) rectangle (0,-1);
        \end{scope}
        
        \draw[axis] (-2.7, 0) -- (2.7, 0) node[right] {$x_1$};
        \draw[axis] (0, -2.7) -- (0, 2.7) node[above] {$x_2$};
        
        \node[below, align=center] at (0, -3) {\textbf{(a) Type {\rm I}}: standard grid};
    \end{scope}
    \end{tikzpicture}
    \label{fig:tiling_sys1}
\end{figure}

\begin{figure}[htbp]
    \centering
    \begin{tikzpicture}[
        scale=1.1,
        gridline/.style={gray, very thin},
        axis/.style={->, thick, black},
        basetile/.style={fill=blue, opacity=0.3, draw=blue, thick},
        transtile/.style={fill=red, opacity=0.3, draw=red, thick}
    ]
    \begin{scope}[shift={(0,0)}]
        \begin{scope}
            \clip (-2.5, -2.5) rectangle (2.5, 2.5);
            
            \foreach \y in {-3,-2,-1,0,1,2,3} {
                \draw[gridline] (-4, \y) -- (4, \y);
            }
            \foreach \m in {-6,...,6} {
                \draw[gridline] (\m-3, -3) -- (\m+3, 3);
            }
            
            \filldraw[basetile] (0,0) -- (1,0) -- (2,1) -- (1,1) -- cycle;
            
            \filldraw[transtile] (-2,-1) -- (-1,-1) -- (0,0) -- (-1,0) -- cycle;
        \end{scope}
        
        \draw[axis] (-2.7, 0) -- (2.7, 0) node[right] {$x_1$};
        \draw[axis] (0, -2.7) -- (0, 2.7) node[above] {$x_2$};
        
        \node[below, align=center] at (0, -3) {\textbf{(b) Type {\rm II} and {\rm  III}}: slanted rectangles with the base on first direction};
    \end{scope}
    \end{tikzpicture}
    \label{fig:tiling_sys2}
\end{figure}

%

To discretize the real-variable theory and build a bridge to the atomic decomposition on the twisted geometry, we now construct the Haar basis adapted to the dyadic tubes. As observed in the structure of the tubes, the singular interactions partition the geometric objects into three fundamental families, which further sub-divide into the five types of rectangles (type ${\rm I}$ through type ${\rm V}$).

\begin{figure}[htbp]
    \centering
    \begin{tikzpicture}[
        scale=1.1,
        gridline/.style={gray, very thin},
        axis/.style={->, thick, black},
        basetile/.style={fill=blue, opacity=0.3, draw=blue, thick},
        transtile/.style={fill=red, opacity=0.3, draw=red, thick}
    ]
    \begin{scope}[shift={(0,0)}]
        \begin{scope}
            \clip (-2.5, -2.5) rectangle (2.5, 2.5);
            
            \foreach \x in {-3,-2,-1,0,1,2,3} {
                \draw[gridline] (\x, -4) -- (\x, 4);
            }
            \foreach \m in {-6,...,6} {
                \draw[gridline] (-3, \m-3) -- (3, \m+3); 
            }
            
            \filldraw[basetile] (0,0) -- (1,1) -- (1,2) -- (0,1) -- cycle;
            
            \filldraw[transtile] (-2,-1) -- (-1,0) -- (-1,1) -- (-2,0) -- cycle;
        \end{scope}
        
        \draw[axis] (-2.7, 0) -- (2.7, 0) node[right] {$x_1$};
        \draw[axis] (0, -2.7) -- (0, 2.7) node[above] {$x_2$};
        
        \node[below, align=center] at (0, -3) {\textbf{(c) Type {\rm IV} and {\rm V}}: slanted rectangles with base on the second direction.};
    \end{scope}
    \end{tikzpicture}
    \label{fig:tiling_sys3}
\end{figure}   

Let $\mathscr{D}$ denote the collection of all standard dyadic cubes in $\mathbb{R}^m$. For each $I \in \mathscr{D}$, let $\{ h_I^\epsilon \}_{\epsilon \in \mathcal{E}}$ be the standard $L^2$-normalized Haar wavelet basis supported on $I$, where the index set is $\mathcal{E} := \{1, 2, \dots, 2^m-1\}$. For a standard dyadic rectangle $R = I \times J \in \mathscr{D} \times \mathscr{D}$ and a multi-index $\vec{\epsilon} = (\epsilon_1, \epsilon_2) \in \mathcal{E} \times \mathcal{E}$, we define the standard tensor product Haar function on $\mathbb{R}^{2m}$ as
\begin{align*}
    h_{I \times J}^{\vec{\epsilon}}(\mathbf{x}_1, \mathbf{x}_2) := h_I^{\epsilon_1}(\mathbf{x}_1) h_J^{\epsilon_2}(\mathbf{x}_2).
\end{align*}
It is a classical fact that the collection $$\mathcal{B}_1 := \{ h_{I \times J}^{\vec{\epsilon}} \}_{I,J \in \mathscr{D}, \, \vec{\epsilon}}$$ 
forms a complete orthonormal basis for $L^2(\mathbb{R}^{2m})$. Note that the support of $h_{I \times J}^{\vec{\epsilon}}$  matches the type ${\rm I}$ standard rectangles.

 To construct the Haar functions adapted to slanted tubes, we now use the affine shears introduced above. Since $T_2$ and $T_3$ are measure-preserving linear bijections on $\mathbb{R}^{2m}$, their pullbacks transport the standard tensor-product Haar system to the two slanted dyadic systems.
The Jacobian of $T_2$ and $T_3$ correspond to the block matrices
$$
\begin{pmatrix} I_m & -I_m \\ 0 & I_m \end{pmatrix}
\qquad\text{and}\qquad
\begin{pmatrix} I_m & 0 \\ -I_m & I_m \end{pmatrix},
$$
respectively. Both matrices have determinant $1$, making them measure-preserving on $\mathbb{R}^{2m}$.

We define twisted Haar functions by pulling back the standard Haar basis through the transformations, $T_2$ and $T_3$,
\begin{align*}
    h_{I \times_t J}^{\vec{\epsilon}}(\mathbf{x}_1, \mathbf{x}_2) := h_{I \times J}^{\vec{\epsilon}}(T_2(\mathbf{x}_1, \mathbf{x}_2)) \quad\text{and}\quad h_{I \,_t\!\times J}^{\vec{\epsilon}}(\mathbf{x}_1, \mathbf{x}_2) := h_{I \times J}^{\vec{\epsilon}}(T_3(\mathbf{x}_1, \mathbf{x}_2)).
\end{align*}

\smallskip
\begin{prop}\label{prop:haar_properties}
Let 
$$\mathcal{B}_2 = \{ h_{I \times_t J}^{\vec{\epsilon}} \}_{I,J \in \mathscr{D}, \, \vec{\epsilon}}  \qquad\text{and}\qquad \mathcal{B}_3 = \{ h_{I \,_t\!\times J}^{\vec{\epsilon}} \}_{I,J \in \mathscr{D}, \, \vec{\epsilon}}$$ 
be the twisted Haar systems. Then both $\mathcal{B}_2$ and $\mathcal{B}_3$ satisfy the following properties:
\begin{itemize}
    \item[(i)] The support of each basis function  matches the corresponding twisted dyadic tube. Specifically,
    \begin{align*}
        {\rm supp}\big(h_{I \times_t J}^{\vec{\epsilon}}\big) = I \times_t J\quad\text{and}\quad
        {\rm supp}\big(h_{I \,_t\!\times J}^{\vec{\epsilon}}\big) = I \,_t\!\times J,
    \end{align*}
    with Lebesgue measures $|I \times_t J| = |I \,_t\!\times J| \simeq |I||J|$.

    \smallskip
    
    \item[(ii)] Both $\mathcal{B}_2$ and $\mathcal{B}_3$ are complete orthonormal bases   in $L^2(\mathbb{R}^{2m})$.

    \smallskip
    
    \item[(iii)]   The elements of $\mathcal B_2$ have cancellation in the two variables
    \[
    u=x_1-x_2,
    \qquad
    v=x_2,
    \]
    while the elements of $\mathcal B_3$ have cancellation in the two variables
    \[
    u=x_1,
    \qquad
    v=x_2-x_1.
    \]
    More precisely, for $R=I\times_t J\in\mathscr R^{(2)}$,
    \[
    h_R^{\vec\epsilon}(x_1,x_2)
    =
    h_I^{\epsilon_1}(x_1-x_2)\,h_J^{\epsilon_2}(x_2),
    \]
    so the integral vanishes separately in the $u$-variable and in the $v$-variable; similarly for $\mathcal B_3$.
\end{itemize}
\end{prop}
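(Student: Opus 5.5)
The whole proposition follows from one observation: composition with a measure-preserving linear bijection is a unitary operator on $L^2(\mathbb R^{2m})$. Concretely, define $U_k f := f\circ T_k$ for $k=2,3$, with $T_2,T_3$ the shears in \eqref{affine}. Since $|\det DT_k|=1$, the change of variables formula gives $\|U_kf\|_{L^2}=\|f\|_{L^2}$. Moreover $U_k$ is surjective, with inverse $f\mapsto f\circ T_k^{-1}$. By definition, $h^{\vec\epsilon}_{I\times_t J}=U_2 h^{\vec\epsilon}_{I\times J}$ and $h^{\vec\epsilon}_{I\,_t\!\times J}=U_3 h^{\vec\epsilon}_{I\times J}$, so $\mathcal B_k=U_k\mathcal B_1$. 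The three items will be derived from this in turn.

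For (i), I will compute the support of the pulled-back function directly. For any function $f$ we have $\{f\circ T_k\neq 0\}=T_k^{-1}\{f\neq0\}$; since $T_k$ is a homeomorphism, the same identity holds for closed supports. The standard Haar function $h_I^{\epsilon}$ is, up to a null set, nonzero everywhere on $I$. This holds because it takes values $\pm|I|^{-1/2}$ on the dyadic children of $I$, which is the usual convention identifying its support with the cube $I$. Hence ${\rm supp}\,h^{\vec\epsilon}_{I\times J}=I\times J$. The pullback identities $T_2^{-1}(I\times J)=I\times_t J$ and $T_3^{-1}(I\times J)=I\,_t\!\times J$ stated before the proposition then give the claimed supports. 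Measure preservation yields $|I\times_tJ|=|I\,_t\!\times J|=|I\times J|=|I||J|$, so the relation $\simeq$ in fact holds with equality.

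For (ii), orthonormality follows because $U_k$ preserves inner products. Completeness follows because $U_k$ is surjective: if $g\perp \mathcal B_k$, then $U_k^{-1}g\perp\mathcal B_1$, and the classical completeness of $\mathcal B_1$ forces $U_k^{-1}g=0$, hence $g=0$. Equivalently, a unitary image of an orthonormal basis is again an orthonormal basis.

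For (iii), substituting $T_2(x_1,x_2)=(x_1-x_2,x_2)$ gives the product formula $h_R^{\vec\epsilon}(x_1,x_2)=h_I^{\epsilon_1}(x_1-x_2)h_J^{\epsilon_2}(x_2)$. In the coordinates $(u,v)=T_2(x)$, which form a unimodular change of variables, the function becomes the tensor product $h_I^{\epsilon_1}(u)h_J^{\epsilon_2}(v)$. The cancellation then follows from $\int h_I^{\epsilon}=0$, which holds because $\epsilon\neq0$: we get $\int h\,du=0$ for each fixed $v$ and $\int h\,dv=0$ for each fixed $u$. The case of $\mathcal B_3$ with $(u,v)=(x_1,x_2-x_1)$ is the same computation. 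None of these steps is a real obstacle. The only point needing care is the meaning of "support" in (i), namely that it is taken as the essential or closed support, so that the equality with $I\times J$ holds exactly rather than only up to boundary sets.
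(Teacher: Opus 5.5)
Your proposal is correct and takes the same route as the paper. Both write $\mathcal B_k=U_k\mathcal B_1$ with the unitary pullback $U_kf=f\circ T_k$, read off supports and measures from $T_k^{-1}(I\times J)$, and deduce orthonormality and completeness from unitarity (the paper argues via $U_2^*$, you via $U_k^{-1}$, which is the same operator); cancellation then comes from the explicit product formula in the rectified variables.

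One small precision on your support remark: the identity holds exactly for the nonvanishing set, $\{h^{\vec\epsilon}_{I\times_t J}\neq 0\}=I\times_t J$. The closed support is instead $\overline{I\times_t J}$, and the essential support is only defined up to null sets, so with those conventions the equality in (i) holds modulo a boundary null set.
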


\begin{proof}
 The cancellation property and orthogonality follow from the cancellation property and the orthogonality of the standard Haar functions and the definition of the measure-preserving transformations.  Concretely, we only consider $\mathcal B_2$, and the proof for $\mathcal B_3$ is identical.

Since
\[
T_2(x_1,x_2)=(x_1-x_2,x_2)
\]
is a linear bijection with determinant $1$, it is measure-preserving. Therefore, for
$R=I\times_t J=T_2^{-1}(I\times J)$,
\[
\text{supp\,}(h_R^{\vec\epsilon})
=
T_2^{-1}(\text{supp\,}(h_{I\times J}^{\vec\epsilon}))
=
T_2^{-1}(I\times J)
=
R,
\]
and
\[
|R|
=
|T_2^{-1}(I\times J)|
=
|I\times J|
=
|I||J|.
\]

Let $U_2\phi:=\phi\circ T_2$. Since $T_2$ is measure-preserving, $U_2$ is unitary on
$L^2(\mathbb R^{2m})$. By definition,
\[
h_R^{\vec\epsilon}=U_2(h_{I\times J}^{\vec\epsilon}),
\]
so $\mathcal B_2$ is the unitary image of the standard tensor-product Haar basis $\mathcal B_1$.
Hence $\mathcal B_2$ is again a complete orthonormal basis of $L^2(\mathbb R^{2m})$.

Finally,
\[
h_R^{\vec\epsilon}(x_1,x_2)
=
h_I^{\epsilon_1}(x_1-x_2)\,h_J^{\epsilon_2}(x_2),
\]
which makes the cancellation in the rectified variables $(u,v)=(x_1-x_2,x_2)$ explicit.

 \smallskip

 It remains to verify that both $\mathcal{B}_2$ and $\mathcal{B}_3$ are complete in $L^2(\mathbb{R}^{2m})$.  Without loss of generality, we show that $\mathcal{B}_2$ is a complete orthonormal basis. To see this, it suffices to show that the only function orthogonal to every element in $\mathcal{B}_2$ is the zero function. Suppose $f \in L^2(\mathbb{R}^{2m})$ is a function such that
\begin{align*}
    \big\langle f, h_{I \times_t J}^{\vec{\epsilon}} \big\rangle = 0,\quad \forall I, J \in \mathscr{D} \,\,\text{and}\,\,\forall \vec{\epsilon}.
\end{align*}
Let $U_2$ be the unitary pullback operator associated with $T_2$, defined by $U_2 \phi = \phi \circ T_2$. By definition, we have $h_{I \times_t J}^{\vec{\epsilon}} = U_2(h_{I \times J}^{\vec{\epsilon}})$. Therefore,
\begin{align*}
    0 = \big\langle f, U_2 \big( h_{I \times J}^{\vec{\epsilon}} \big) \big\rangle = \big\langle U_2^* f, h_{I \times J}^{\vec{\epsilon}} \big\rangle.
\end{align*}
This implies that the function $U_2^* f$ is orthogonal to every element of the standard product Haar basis $\mathcal{B}_1$. As a result, $U_2^* f = 0$ almost everywhere. 

Since $U_2^*$ is a unitary operator, its kernel is trivial. Thus, we obtain that $f = 0$ almost everywhere. This proves that the span of $\mathcal{B}_2$ is dense in $L^2(\mathbb{R}^{2m})$, making it a complete orthonormal basis. The proof is complete.
\end{proof}


Let $\mathcal{B}_2 := \{ h_{I \times_t J}^{\vec{\epsilon}} \}$ and $\mathcal{B}_3 := \{ h_{I \,_t\!\times J}^{\vec{\epsilon}} \}$ be given in Proposition \ref{prop:haar_properties}. Then, as an immediate corollary, we obtain the following representation.

\begin{cor}[Twisted Haar frame representation]\label{thm:haar_frame}~\\
 The union of the three dyadic tube Haar systems $\mathcal{B}_1 \cup \mathcal{B}_2 \cup \mathcal{B}_3$ forms a tight frame for $L^2(\mathbb{R}^{2m})$ with frame bound $3$. Consequently, for any $f \in L^2(\mathbb{R}^{2m})$, we have \begin{align}\label{eq:haar_rep}
       f
=
\frac13
\sum_{i=1}^3
\ \sum_{R\in\mathscr R^{(i)}}
\ \sum_{\vec\epsilon\in\mathcal E\times\mathcal E}
\langle f,h_R^{\vec\epsilon}\rangle\,h_R^{\vec\epsilon},
    \end{align}
    where the sum converges unconditionally in $L^2(\mathbb{R}^{2m})$.
\end{cor}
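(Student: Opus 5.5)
The plan is to deduce the corollary directly from Proposition \ref{prop:haar_properties}(ii) together with the classical fact that $\mathcal B_1$ is a complete orthonormal basis. We write $\mathscr R^{(1)}$, $\mathscr R^{(2)}$, $\mathscr R^{(3)}$ for the index families $\{I\times J\}$, $\{I\times_t J\}$ and $\{I\,_t\!\times J\}$ with $I,J\in\mathscr D$. For each fixed $i\in\{1,2,3\}$, completeness and orthonormality give Parseval's identity
\[
\sum_{R\in\mathscr R^{(i)}}\sum_{\vec\epsilon\in\mathcal E\times\mathcal E}|\langle f,h_R^{\vec\epsilon}\rangle|^2=\|f\|_{L^2}^2 ,
\]
and the reconstruction formula $f=\sum_{R\in\mathscr R^{(i)}}\sum_{\vec\epsilon}\langle f,h_R^{\vec\epsilon}\rangle h_R^{\vec\epsilon}$, which converges unconditionally in $L^2(\mathbb R^{2m})$. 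This holds for any orthonormal basis: the partial sums over any exhausting sequence of finite subsets converge, because the tail norms equal tails of a convergent series of nonnegative terms.

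Next we sum the three Parseval identities. The union $\mathcal B_1\cup\mathcal B_2\cup\mathcal B_3$ is regarded as an indexed family with index set $\bigsqcup_i(\mathscr R^{(i)}\times\mathcal E\times\mathcal E)$, so any function that happens to appear in two of the systems is counted with multiplicity. This gives
\[
\sum_{i=1}^3\sum_{R\in\mathscr R^{(i)}}\sum_{\vec\epsilon}|\langle f,h_R^{\vec\epsilon}\rangle|^2=3\|f\|_{L^2}^2 ,
\]
which is exactly the tight frame condition with bound $A=B=3$. The frame operator is $Sf=\sum_i\sum_R\sum_{\vec\epsilon}\langle f,h_R^{\vec\epsilon}\rangle h_R^{\vec\epsilon}$. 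It equals the sum of the three orthogonal resolutions of the identity, so $S=3\,\mathrm{Id}$. Dividing by $3$ yields \eqref{eq:haar_rep}.

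For unconditional convergence of the combined triple sum, we take any finite set $F$ in the disjoint index set and split it into its three components $F_i$. The partial sum over $F$ is the sum of three partial sums, one from each basis expansion. Each of these converges unconditionally to $f$ as $F_i$ increases to the full index set, so the total converges unconditionally to $3f$.

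The only point needing care, and so the main (mild) obstacle, is this bookkeeping with multiplicity. Some Haar functions may coincide across systems; for instance, $T_2$ could in principle map a tensor-product Haar function to another one. Counting the union as a set rather than as an indexed family would then break the constant $3$. We will therefore state explicitly that the union is taken as a disjoint indexed family, and that the formula \eqref{eq:haar_rep} is a sum over $i$ of the three individual expansions.
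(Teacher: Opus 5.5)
Your proposal is correct and follows essentially the same route as the paper: you apply Parseval's identity to each of the three complete orthonormal systems $\mathcal B_1,\mathcal B_2,\mathcal B_3$ and sum over $i$ to get frame bound $3$ (frame operator $3\,\mathrm{Id}$), and your splitting of finite index sets supplies the unconditional convergence that the paper leaves to the standard tight-frame reconstruction formula. Your multiplicity caveat is harmless but never actually comes into play: each Haar function has constant nonzero modulus on its tube and vanishes off it, and a sheared tube $I\times_t J$ or $I\,{}_t\!\times J$ is never a Cartesian rectangle nor a tube of the other sheared type, so no function occurs in two of the systems.
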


\begin{proof}
For each $i=1,2,3$, the system $\mathcal B_i$ is a complete orthonormal basis of
$L^2(\mathbb R^{2m})$. Hence Parseval's identity holds for each $\mathcal B_i$:
\[
\sum_{R\in\mathscr R^{(i)}}\ \sum_{\vec\epsilon}
|\langle f,h_R^{\vec\epsilon}\rangle|^2
=
\|f\|_2^2.
\]
Summing over $i=1,2,3$ yields the tight frame identity with frame bound $3$, and
\eqref{eq:haar_rep} is the standard reconstruction formula for a tight frame.
\end{proof}

\section{Twisted multi-parameter filtrations and martingale properties}

The geometric affine shears $T_2$ and $T_3$ introduced in (\ref{affine}) allow us to  transfer the classical multi-parameter dyadic martingale theory to our twisted tube systems. We define a bi-parameter scale index $\mathbf{k} = (k_1, k_2) \in \mathbb{Z}^2$. Let $\mathscr{D}_{k}$ denote the collection of all standard dyadic cubes in $\mathbb{R}^m$ of side length $2^{-k}$. 

Define the standard bi-parameter dyadic filtration. Let $\mathscr{F}_{\mathbf{k}}^{(1)}$ be the $\sigma$-algebra generated by the standard dyadic rectangles at scale $\mathbf{k}$:
\begin{align*}
    \mathscr{F}_{\mathbf{k}}^{(1)} := \sigma\Big( \big\{ I \times J \subset \mathbb{R}^{2m} : I \in \mathscr{D}_{k_1}, J \in \mathscr{D}_{k_2} \big\} \Big).
\end{align*}
The standard conditional expectation operator $E_{\mathbf{k}}^{(1)} : L^1_{loc}(\mathbb{R}^{2m}) \to L^1_{loc}(\mathbb{R}^{2m})$ with respect to $\mathscr{F}_{\mathbf{k}}^{(1)}$ is the average of a function over these rectangles, that is
\begin{align*}
    E_{\mathbf{k}}^{(1)} f(\mathbf{x}) := \sum_{I \in \mathscr{D}_{k_1}} \sum_{J \in \mathscr{D}_{k_2}} \left( \frac{1}{|I \times J|} \int_{I \times J} f(\mathbf{y}) \, d\mathbf{y} \right) \chi_{I \times J}(\mathbf{x}).
\end{align*}

The {\it twisted filtrations} are defined by pulling back the standard $\sigma$-algebras through our geometric shears $T_2$ and $T_3$; in other words,
\begin{align*}
\mathscr{F}_{\mathbf{k}}^{(2)} := T_2^{-1}\big(\mathscr{F}_{\mathbf{k}}^{(1)}\big) = \sigma\Big( \big\{ I \times_t J : I \in \mathscr{D}_{k_1}, J \in \mathscr{D}_{k_2} \big\} \Big)
\end{align*}
and
\begin{align*}
\mathscr{F}_{\mathbf{k}}^{(3)} := T_3^{-1}\big(\mathscr{F}_{\mathbf{k}}^{(1)}\big) = \sigma\Big( \big\{ I \,_t\!\times J : I \in \mathscr{D}_{k_1}, J \in \mathscr{D}_{k_2} \big\} \Big).
\end{align*}
By definition, $\mathscr{F}_{\mathbf{k}}^{(2)}$ is generated exactly by the slanted tubes with base on the first direction  and $\mathscr{F}_{\mathbf{k}}^{(3)}$ is generated by the slanted tubes with base on the second direction.

 The twisted bi-parameter martingale difference operators are defined by conjugating the standard product martingale structure through the measure-preserving shears.  This makes the commuting structure transparent and aligns the twisted setting with the classical product filtration.

\begin{prop}[Twisted conditional expectations and martingale differences]\label{prop:twisted_martingale}~\\
    Let $U_2$ and $U_3$ be the pullback operators defined by $U_i f = f \circ T_i$ for each $i=2,3$. 
    \begin{itemize}
        \item[(i)] The conditional expectation operators for the twisted filtrations are  the unitary conjugations of the standard expectation,
        \begin{align}\label{Twcondexp}
            E_{\mathbf{k}}^{(i)} = U_i E_{\mathbf{k}}^{(1)} U_i^* \quad \  \     \text{for } i \in \{2, 3\}.
        \end{align}
        Explicitly, $E_{\mathbf{k}}^{(i)} f(\mathbf{x})$ computes the average of $f$ over the unique slanted tube in system $i$ containing $\mathbf{x}$.

        \smallskip
        
        \item[(ii)] The twisted bi-parameter martingale difference operator at scale $\mathbf{k}=(k_1,k_2)$ is defined in the usual product manner by taking successive differences in the two parameter directions:
$$
\Delta_{\mathbf{k}}^{(i)}
:=
(E_{k_1+1, k_2}^{(i)} - E_{k_1, k_2}^{(i)})
(E_{k_1, k_2+1}^{(i)} - E_{k_1, k_2}^{(i)}).
$$
Moreover, $\Delta_{\mathbf{k}}^{(i)}$ is the orthogonal projection onto the twisted Haar bases at scale $\mathbf{k}$ can be written as
        \begin{align}\label{twisteddifference}
            \Delta_{\mathbf{k}}^{(i)} f = \sum_{\substack{R \in \mathcal{B}_i \\ \text{scale}(R) = \mathbf{k}}} \sum_{\vec{\epsilon}} \big\langle f, h_R^{\vec{\epsilon}} \big\rangle h_R^{\vec{\epsilon}}.
        \end{align}
    \end{itemize}
\end{prop}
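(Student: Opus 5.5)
The plan is to reduce everything to the standard product filtration $\mathscr F^{(1)}_{\mathbf k}$ by conjugating with the unitary $U_i$, and then read off both (i) and (ii) from the classical facts.

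\textbf{Step 1: part (i).} Since $T_i$ is a measure-preserving linear bijection, we have $U_i^*=U_i^{-1}$, with $U_i^*g=g\circ T_i^{-1}$. For $i=2$, fix $\mathbf x$ and let $R=I\times_t J=T_2^{-1}(I\times J)$ be the unique atom of $\mathscr F^{(2)}_{\mathbf k}$ containing $\mathbf x$. Then $T_2\mathbf x\in I\times J$. Hence
\[
U_2E^{(1)}_{\mathbf k}U_2^*f(\mathbf x)=E^{(1)}_{\mathbf k}(f\circ T_2^{-1})(T_2\mathbf x)=\frac{1}{|I\times J|}\int_{I\times J}f(T_2^{-1}\mathbf y)\,d\mathbf y .
\]
Substituting $\mathbf y=T_2\mathbf z$ (Jacobian $1$) turns this into $|R|^{-1}\int_R f$, using $|R|=|I||J|$ from Proposition \ref{prop:haar_properties}(i). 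The atoms $T_2^{-1}(I\times J)$ partition $\mathbb R^{2m}$ up to null sets and generate $\mathscr F^{(2)}_{\mathbf k}$. The averaging operator over them is therefore the conditional expectation with respect to $\mathscr F^{(2)}_{\mathbf k}$, which proves \eqref{Twcondexp}. The case $i=3$ is identical.

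\textbf{Step 2: conjugating the difference operator.} By (i), each factor conjugates:
\[
E^{(i)}_{k_1+1,k_2}-E^{(i)}_{k_1,k_2}=U_i\bigl(E^{(1)}_{k_1+1,k_2}-E^{(1)}_{k_1,k_2}\bigr)U_i^*,
\]
and similarly for the second direction. Since $U_i^*U_i=I$, the product collapses to $\Delta^{(i)}_{\mathbf k}=U_i\Delta^{(1)}_{\mathbf k}U_i^*$.

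\textbf{Step 3: the classical product identity.} We need
\[
\Delta^{(1)}_{\mathbf k}f=\sum_{I\in\mathscr D_{k_1},\,J\in\mathscr D_{k_2}}\sum_{\vec\epsilon}\langle f,h^{\vec\epsilon}_{I\times J}\rangle h^{\vec\epsilon}_{I\times J}.
\]
To see this, write $E^{(1)}_{k_1,k_2}=\mathbb E^{1}_{k_1}\otimes\mathbb E^{2}_{k_2}$, where $\mathbb E_k$ is the one-parameter dyadic expectation on $\mathbb R^m$. The first factor becomes $(\mathbb E^1_{k_1+1}-\mathbb E^1_{k_1})\otimes\mathbb E^2_{k_2}$ and the second becomes $\mathbb E^1_{k_1}\otimes(\mathbb E^2_{k_2+1}-\mathbb E^2_{k_2})$.

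Here lies the main subtlety: the product of these two factors, taken literally, contains $(\mathbb E_{k_1+1}-\mathbb E_{k_1})\mathbb E_{k_1}=0$. So the composition as written vanishes, and we need the standard tensor convention. In the two factors, the $E_{k_1,k_2}$ terms must be understood as acting in one variable only. That is, the intended operator is the product of the one-parameter differences $D^1_{k_1}=(\mathbb E^1_{k_1+1}-\mathbb E^1_{k_1})\otimes\mathrm{Id}$ and $D^2_{k_2}=\mathrm{Id}\otimes(\mathbb E^2_{k_2+1}-\mathbb E^2_{k_2})$. These commute, and their product is $D_{k_1}\otimes D_{k_2}$.

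We would state this interpretation explicitly. The one-parameter identity $D_k g=\sum_{I\in\mathscr D_k}\sum_\epsilon\langle g,h_I^\epsilon\rangle h_I^\epsilon$ is the classical Haar characterization of dyadic martingale differences. Tensoring it gives the displayed formula, which is the orthogonal projection onto $\overline{\mathrm{span}}\{h^{\vec\epsilon}_{I\times J}:I\in\mathscr D_{k_1},J\in\mathscr D_{k_2}\}$.

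\textbf{Step 4: transporting back.} Since $U_i$ is unitary,
\[
U_i\Bigl(\sum\langle U_i^*f,h^{\vec\epsilon}_{I\times J}\rangle h^{\vec\epsilon}_{I\times J}\Bigr)=\sum\langle f,U_ih^{\vec\epsilon}_{I\times J}\rangle\,U_ih^{\vec\epsilon}_{I\times J}.
\]
Here $U_ih^{\vec\epsilon}_{I\times J}$ is exactly the twisted Haar function $h_R^{\vec\epsilon}$ with $\mathrm{scale}(R)=\mathbf k$. This gives \eqref{twisteddifference}. Moreover, conjugating an orthogonal projection by a unitary is again an orthogonal projection, onto the image subspace spanned by these functions.
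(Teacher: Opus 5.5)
Your proposal follows the same route as the paper's proof: the change of variables $\mathbf y=T_i\mathbf z$ for (i), then $\Delta^{(i)}_{\mathbf k}=U_i\Delta^{(1)}_{\mathbf k}U_i^*$, the classical product Haar identity for $\Delta^{(1)}_{\mathbf k}$, and transport of the expansion through the adjoint. Your Step 3 observation is correct, and the paper's proof passes over it. Since $\mathbb E_{k_1+1}\mathbb E_{k_1}=\mathbb E_{k_1}$, the displayed product defining $\Delta^{(i)}_{\mathbf k}$ is identically zero. So (ii) holds only for the standard difference $(\mathbb E_{k_1+1}-\mathbb E_{k_1})\otimes(\mathbb E_{k_2+1}-\mathbb E_{k_2})$, which is your reinterpretation. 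This operator also equals $E^{(1)}_{k_1+1,k_2+1}-E^{(1)}_{k_1,k_2+1}-E^{(1)}_{k_1+1,k_2}+E^{(1)}_{k_1,k_2}$, a form to which (i) applies directly without introducing one-variable twisted expectations.
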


\begin{proof}
   We first prove (\ref{Twcondexp}) for $i=2$. The case for $i=3$ is parallel. Let $R_{std} = I \times J$ be a standard rectangle, then
    \begin{align*}
        \big(U_2 E_{\mathbf{k}}^{(1)} U_2^* f\big)(\mathbf{x}) &= E_{\mathbf{k}}^{(1)} \big(f \circ T_2^{-1}\big)(T_2 \mathbf{x}) \\
        &= \sum_{R_{std}} \left( \frac{1}{|R_{std}|} \int_{R_{std}} f(T_2^{-1} \mathbf{y}) \, d\mathbf{y} \right) \chi_{R_{std}}(T_2 \mathbf{x}).
    \end{align*}
    By perform the change of variables $\mathbf{u} = T_2^{-1} \mathbf{y}$ in the integral and the measure-preserving property, we obtain that 
    \begin{align*}
        \big(U_2 E_{\mathbf{k}}^{(1)} U_2^* f\big)(\mathbf{x}) &= \sum_{R_{slant}} \left( \frac{1}{|R_{slant}|} \int_{R_{slant}} f(\mathbf{u}) \, d\mathbf{u} \right) \chi_{R_{slant}}(\mathbf{x}).
    \end{align*}
    This is the definition of the conditional expectation $E_{\mathbf{k}}^{(2)} f(\mathbf{x})$ over the $\sigma$-algebra $\mathscr{F}_{\mathbf{k}}^{(2)}$.

Next, we show (\ref{twisteddifference}) for $i=2$. The case for $i=3$ is similar. Recall that in the standard bi-parameter setting ($i=1$), the multi-parameter martingale difference $\Delta_{\mathbf{k}}^{(1)}$ isolates the high-frequency oscillations precisely at scale $\mathbf{k}$. It is a classical result that $\Delta_{\mathbf{k}}^{(1)} f$ is the projection of $f$ onto the standard tensor-product Haar functions at scale $\mathbf{k}$, which is 
    \begin{align*}
        \Delta_{\mathbf{k}}^{(1)} f = \sum_{\substack{R \in \mathcal{B}_1 \\ \text{scale}(R) = \mathbf{k}}} \sum_{\vec{\epsilon}} \big\langle f, h_R^{\vec{\epsilon}} \big\rangle h_R^{\vec{\epsilon}}.
    \end{align*}
 Since \eqref{Twcondexp} holds for all relevant partial conditional expectations, the same conjugation identity passes to their differences. Thus
\[
\Delta_{\mathbf k}^{(2)}
=
U_2\,\Delta_{\mathbf k}^{(1)}\,U_2^*.
\] 
Therefore,
    \begin{align}\label{unitarykey}
        \Delta_{\mathbf{k}}^{(2)} f &= U_2 \Delta_{\mathbf{k}}^{(1)} U_2^* f \\
        &= U_2 \,\bigg( \sum_{\substack{R \in \mathcal{B}_1 \\ \text{scale}(R) = \mathbf{k}}} \sum_{\vec{\epsilon}} \big\langle U_2^* f, h_R^{\vec{\epsilon}} \big\rangle h_R^{\vec{\epsilon}} \bigg).\notag
    \end{align}
    By the definition of the adjoint, the inner product is $\langle U_2^* f, h_R^{\vec{\epsilon}} \rangle = \langle f, U_2 h_R^{\vec{\epsilon}} \rangle$. Applying the outer unitary operator $U_2$ to the Haar function gives exactly our slanted Haar function. As a result, we have
    \begin{align*}
        \Delta_{\mathbf{k}}^{(2)} f = \sum_{\substack{R_{slant} \in \mathcal{B}_2 \\ \text{scale}(R_{slant}) = \mathbf{k}}} \sum_{\vec{\epsilon}} \big\langle f, h_{R_{slant}}^{\vec{\epsilon}} \big\rangle h_{R_{slant}}^{\vec{\epsilon}}.
    \end{align*}
    This establishes the exact discrete Haar projection for the twisted martingales and the proof is complete.
\end{proof}

The unitary structure of these point transformations is powerful, as it extends beyond $L^2$ into all $L^p$ spaces. We can now define the discrete Littlewood--Paley square functions for our twisted geometry and trivially deduce their $L^p$ boundedness.

Let $f \in L^p(\mathbb{R}^{2m})$. We define the \textit{twisted martingale square functions} associated with the three tube systems as:
\begin{align*}
    S_{mart}^{(i)}(f)(\mathbf{x}) := \left( \sum_{\mathbf{k} \in \mathbb{Z}^2} \big| \Delta_{\mathbf{k}}^{(i)} f(\mathbf{x}) \big|^2 \right)^{1/2}, \quad \text{for } i \in \{1, 2, 3\}.
\end{align*}

\begin{thm}[Twisted Burkholder--Davis--Gundy inequalities]\label{thm:twisted_BDG}~\\
    For any $1 < p < \infty$, the twisted martingale square functions are bounded on $L^p(\mathbb{R}^{2m})$. Furthermore, we have the norm equivalences:
    \begin{align*}
        \big\| S_{mart}^{(i)}(f) \big\|_{L^p(\mathbb{R}^{2m})} \simeq \|f\|_{L^p(\mathbb{R}^{2m})} \quad \text{for } \  \    i \in \{1, 2, 3\},
    \end{align*}
    where the implicit constants depend only on $p$ and the dimension $m$.
\end{thm}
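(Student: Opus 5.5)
The plan is to reduce everything to the classical case $i=1$ through the pullback operators $U_2,U_3$ of Proposition \ref{prop:twisted_martingale}. The first step is the case $i=1$. There $\Delta^{(1)}_{\mathbf k}=\Delta_{k_1}\otimes\Delta_{k_2}$ is the tensor product of one-parameter dyadic martingale differences on $\mathbb R^m$. The classical bi-parameter dyadic Littlewood--Paley (product Burkholder--Gundy) theorem gives
\[
\|S^{(1)}_{mart}f\|_{L^p(\mathbb R^{2m})}\simeq\|f\|_{L^p(\mathbb R^{2m})},\qquad 1<p<\infty.
\]
I would obtain this by iterating the one-parameter Burkholder inequality. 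Specifically, I would use its Hilbert-space-valued ($\ell^2$-valued) version, which follows from Khintchine's inequality together with the boundedness of the martingale transforms $\sum_k\pm\Delta_k$ applied in each variable separately (Fubini). The reverse inequality comes from duality, using orthogonality $\langle f,g\rangle=\sum_{\mathbf k}\langle\Delta^{(1)}_{\mathbf k}f,\Delta^{(1)}_{\mathbf k}g\rangle$ and Cauchy--Schwarz/H\"older. Here one needs $\sum_{\mathbf k}\Delta^{(1)}_{\mathbf k}=\mathrm{Id}$ on a dense class. This holds because $E_{\mathbf k}^{(1)}f\to f$ as $k_1,k_2\to+\infty$ and $E^{(1)}_{\mathbf k}f\to0$ as either $k_j\to-\infty$ for $f\in L^p$, $p<\infty$. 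The constants depend only on $p$ and $m$.

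The second step transfers this to $i=2,3$. By Proposition \ref{prop:twisted_martingale}, $\Delta^{(i)}_{\mathbf k}=U_i\Delta^{(1)}_{\mathbf k}U_i^*$. Since $U_i$ is composition with a linear map, it commutes with pointwise operations:
\[
S^{(i)}_{mart}f=\Big(\sum_{\mathbf k}|(\Delta^{(1)}_{\mathbf k}U_i^*f)\circ T_i|^2\Big)^{1/2}=\big(S^{(1)}_{mart}(U_i^*f)\big)\circ T_i.
\]
Because $\det T_i=1$, $U_i$ and $U_i^*=U_i^{-1}$ (composition with $T_i^{-1}$) are isometries on every $L^p$, not only on $L^2$. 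Hence
\[
\|S^{(i)}_{mart}f\|_p=\|S^{(1)}_{mart}(U_i^*f)\|_p\simeq\|U_i^*f\|_p=\|f\|_p,
\]
with the same constants as for $i=1$.

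The main obstacle is the first step: the product martingale square function equivalence, and in particular checking that the expansion $f=\sum_{\mathbf k}\Delta^{(1)}_{\mathbf k}f$ converges in $L^p$ so that the lower bound via duality is legitimate. I would cite this as classical (Chang--Fefferman, Bernard). If a self-contained argument is wanted, I would run the Khintchine-plus-iterated-Burkholder argument above and handle convergence on the dense subclass of finite Haar sums, extending by density. The transfer step is purely formal once the $L^p$-isometry of $U_i$ is noted.
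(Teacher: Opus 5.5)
Your proposal is correct and follows essentially the same route as the paper. The case $i=1$ is the classical bi-parameter dyadic Burkholder--Gundy/Littlewood--Paley theorem; the paper simply cites it, while you also sketch it via Khintchine and iterated one-parameter martingale transforms. The cases $i=2,3$ then follow from the pointwise identity $S^{(i)}_{mart}(f)=\big(S^{(1)}_{mart}(U_i^*f)\big)\circ T_i$ and the $L^p$-isometry of $U_i$ and $U_i^*$, exactly as in the paper.

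Your reading $\Delta^{(1)}_{\mathbf k}=\Delta_{k_1}\otimes\Delta_{k_2}$ is also the correct one. With cubes in $\mathscr D_k$ of side $2^{-k}$, the product of partial differences as literally written in Proposition \ref{prop:twisted_martingale} equals $\big((\mathbb E_{k_1+1}-\mathbb E_{k_1})\mathbb E_{k_1}\big)\otimes\big(\mathbb E_{k_2}(\mathbb E_{k_2+1}-\mathbb E_{k_2})\big)=0$. So the tensor-product difference you use is what both the paper and the classical theorem actually require.
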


\begin{proof}
    For $i=1$, $S_{mart}^{(1)}(f)$ is the standard multi-parameter dyadic square function. The equivalence $\| S_{mart}^{(1)}(f) \|_p \simeq \|f\|_p$ is the classical bi-parameter Burkholder--Davis--Gundy inequality (or discrete Littlewood--Paley theorem) established by Fefferman and Stein.

    For $i=2$, we analyze the action of the unitary operator. Since $U_2 f(\mathbf{x}) = f(T_2 \mathbf{x})$ and the transformation $T_2$ is a measure-preserving bijection, thus for $1 \le p \le \infty$
    \begin{align*}
        \| U_2 f \|_p^p = \int_{\mathbb{R}^{2m}} |f(T_2 \mathbf{x})|^p \, d\mathbf{x} = \int_{\mathbb{R}^{2m}} |f(\mathbf{y})|^p \, d\mathbf{y} = \|f\|_p^p.
    \end{align*}
 From (\ref{unitarykey}), we get $\Delta_{\mathbf{k}}^{(2)} f = U_2 \Delta_{\mathbf{k}}^{(1)} U_2^* f$, and hence
    \begin{align*}
        S_{mart}^{(2)}(f)(\mathbf{x}) &= \left( \sum_{\mathbf{k} \in \mathbb{Z}^2} \big| \big(U_2 \Delta_{\mathbf{k}}^{(1)} U_2^* f\big)(\mathbf{x}) \big|^2 \right)^{1/2} \\
        &= \left( \sum_{\mathbf{k} \in \mathbb{Z}^2} \big| \big(\Delta_{\mathbf{k}}^{(1)} (U_2^* f)\big)(T_2 \mathbf{x}) \big|^2 \right)^{1/2} \\
        &= \big( U_2 S_{mart}^{(1)}(U_2^* f) \big)(\mathbf{x}).
    \end{align*}
    Taking the $L^p$ norm of both sides and applying the $L^p$ isometry of $U_2$ and $U_2^*$:
    \begin{align*}
        \big\| S_{mart}^{(2)}(f) \big\|_p = \big\| U_2 \big( S_{mart}^{(1)}(U_2^* f) \big) \big\|_p = \big\| S_{mart}^{(1)}(U_2^* f) \big\|_p \simeq \big\| U_2^* f \big\|_p = \|f\|_p.
    \end{align*}
    The exact same $L^p$ isometric transfer applies to system $i=3$ via the transformation $T_3$. Thus, the square function equivalences hold uniformly for all three twisted geometric systems.
\end{proof}

\section{Twisted dyadic square functions and Littlewood--Paley equivalences}

With the complete orthonormal twisted Haar bases established, we can define the discrete dyadic square functions adapted to the twisted geometry. In classical multi-parameter harmonic analysis, the standard dyadic square function isolates the orthogonal oscillations of a function at every scale and location using the standard Haar basis. This discrete framework avoids the complexities of continuous singular integrals and relies purely on martingale projections.   This is simply the Haar coefficient form of the martingale square functions introduced in Section 3.

For a function $f \in L^p(\mathbb{R}^{2m})$, we define the standard bi-parameter dyadic square function using the standard Haar basis $\mathcal{B}_1$. The operator measures the sum of the squared magnitudes of the orthogonal projections of $f$ onto every standard dyadic rectangle $R$, that is
\begin{align*}
    S_d^{(1)}(f)(\mathbf{x}) := \left( \sum_{R \in \mathcal{B}_1} \sum_{\vec{\epsilon}} \big| \langle f, h_R^{\vec{\epsilon}} \rangle h_R^{\vec{\epsilon}}(\mathbf{x}) \big|^2 \right)^{1/2},
\end{align*}
where the summation runs over all standard rectangles $R = I \times J \in \mathscr{D} \times \mathscr{D}$, and $\vec{\epsilon} \in \mathcal{E} \times \mathcal{E}$ indexes the internal Haar variations.

We define the \textit{twisted dyadic square functions} by projecting $f$ onto the slanted Haar bases $\mathcal{B}_2$ and $\mathcal{B}_3$, which we constructed via the affine shears $T_2$ and $T_3$. For $i \in \{2, 3\}$, the corresponding twisted dyadic square function is defined by
\begin{align*}
    S_d^{(i)}(f)(\mathbf{x}) := \left( \sum_{R_{slant} \in \mathcal{B}_i} \sum_{\vec{\epsilon}} \big| \langle f, h_{R_{slant}}^{\vec{\epsilon}} \rangle h_{R_{slant}}^{\vec{\epsilon}}(\mathbf{x}) \big|^2 \right)^{1/2}.
\end{align*}
The spatial support of the Haar functions $h_{R_{slant}}^{\vec{\epsilon}} \in \mathcal{B}_2$  matches the type II and  
II tubes, $S_d^{(2)}$ precisely measures the function's high-frequency oscillations along that specific twisted tube geometry. Similarly, $S_d^{(3)}$ measures oscillations strictly over the type IV 
and V 
tubes.

The forthcoming lemma gives us the structure of $S_d^{(2)}$ and $S_d^{(3)}$.

\begin{lem}\label{lem:dyadic_intertwine}
    For any $f \in L^p(\mathbb{R}^{2m})$ and $i \in \{2, 3\}$, the twisted dyadic square function of $f$ evaluated at $\mathbf{x}$ is identically the standard dyadic square function of the adjoint $U_i^* f$ evaluated at the shifted point $T_i \mathbf{x}$, that is
    \begin{align*}
        S_d^{(i)}(f)(\mathbf{x}) = S_d^{(1)}\big(U_i^* f\big)(T_i \mathbf{x}).
    \end{align*}
\end{lem}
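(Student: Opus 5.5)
The identity is a termwise computation: each summand of $S_d^{(i)}(f)(\mathbf{x})$ equals the matching summand of $S_d^{(1)}(U_i^*f)$ evaluated at $T_i\mathbf{x}$. We treat $i=2$; the case $i=3$ is identical with $T_3$, $U_3$ in place of $T_2$, $U_2$. The bijection $I\times J\mapsto I\times_t J$ between $\mathscr D\times\mathscr D$ and the index set of $\mathcal B_2$, together with the same multi-index $\vec\epsilon$, identifies the two index sets. Since $h^{\vec\epsilon}_{I\times_t J}=U_2 h^{\vec\epsilon}_{I\times J}$ by definition, it suffices to match terms indexed by the same $(I,J,\vec\epsilon)$.

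For the coefficient, we use that $U_2$ is unitary on $L^2(\mathbb R^{2m})$, as in Proposition \ref{prop:haar_properties}, and that $U_2^*g=g\circ T_2^{-1}$. This gives
\[
\langle f,h^{\vec\epsilon}_{I\times_t J}\rangle=\langle f,U_2h^{\vec\epsilon}_{I\times J}\rangle=\langle U_2^*f,h^{\vec\epsilon}_{I\times J}\rangle .
\]
For $f\in L^p$ with $p\neq 2$, the pairing should be read as an integral against the bounded, compactly supported function $h^{\vec\epsilon}_{I\times_t J}$. The adjoint identity then holds directly by the change of variables $\mathbf y=T_2\mathbf x$, which has Jacobian $1$:
\[
\int f(\mathbf x)\,h^{\vec\epsilon}_{I\times J}(T_2\mathbf x)\,d\mathbf x=\int f(T_2^{-1}\mathbf y)\,h^{\vec\epsilon}_{I\times J}(\mathbf y)\,d\mathbf y .
\]
For the pointwise factor, the definition gives $h^{\vec\epsilon}_{I\times_t J}(\mathbf x)=h^{\vec\epsilon}_{I\times J}(T_2\mathbf x)$.

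Multiplying the two identities, squaring absolute values, and summing over all $(I,J,\vec\epsilon)$ yields
\[
S_d^{(2)}(f)(\mathbf x)^2=\sum_{I,J\in\mathscr D}\sum_{\vec\epsilon}\big|\langle U_2^*f,h^{\vec\epsilon}_{I\times J}\rangle\,h^{\vec\epsilon}_{I\times J}(T_2\mathbf x)\big|^2=S_d^{(1)}(U_2^*f)(T_2\mathbf x)^2 .
\]
Taking square roots gives the claim. The sums consist of nonnegative terms, so no convergence or rearrangement issue arises; the identity holds pointwise in $[0,\infty]$.

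The only step needing care is extending the adjoint relation from $L^2$ to $L^p$. The change-of-variables computation above handles it directly, using only that $T_2$ is a measure-preserving linear bijection. Beyond that, the argument is bookkeeping of the index correspondence, which is exactly the relation $T_2^{-1}(I\times J)=I\times_t J$.
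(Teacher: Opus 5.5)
Your proposal is correct and follows the paper's proof essentially verbatim: both match the summands termwise via $h^{\vec\epsilon}_{I\times_t J}=U_2h^{\vec\epsilon}_{I\times J}$, move $U_2$ across the pairing to get $\langle U_2^*f,h^{\vec\epsilon}_{I\times J}\rangle$, and evaluate the Haar factor at $T_2\mathbf{x}$. Your two additions tighten details the paper leaves implicit. Reading the pairing for $p\neq 2$ as an integral against a bounded, compactly supported function, justified by the unit-Jacobian change of variables, handles the $L^p$ case. Noting that the sums consist of nonnegative terms removes any convergence or rearrangement concern.
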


\begin{proof}
    We prove the identity for $i=2$. Recall that the twisted Haar basis is the unitary image of the standard Haar basis, $\mathcal{B}_2 = \{ U_2 h_{R_{std}}^{\vec{\epsilon}} \}$, where $R_{std} \in \mathcal{B}_1$. Therefore, we have
    \begin{align*}
        \langle f, h_{R_{slant}}^{\vec{\epsilon}} \rangle h_{R_{slant}}^{\vec{\epsilon}}(\mathbf{x}) &= \big\langle f, U_2 h_{R_{std}}^{\vec{\epsilon}} \big\rangle \big( U_2 h_{R_{std}}^{\vec{\epsilon}} \big)(\mathbf{x})\\
        &= \big\langle U_2^* f, h_{R_{std}}^{\vec{\epsilon}} \big\rangle\big(h_{R_{std}}^{\vec{\epsilon}}(T_2 \mathbf{x})\big).
    \end{align*}
    Substituting these two identities back into the twisted square function expansion, then
    \begin{align*}
        S_d^{(2)}(f)(\mathbf{x}) &= \left( \sum_{R_{std} \in \mathcal{B}_1} \sum_{\vec{\epsilon}} \big| \big\langle U_2^* f, h_{R_{std}}^{\vec{\epsilon}} \big\rangle h_{R_{std}}^{\vec{\epsilon}}(T_2 \mathbf{x}) \big|^2 \right)^{1/2}=S_d^{(1)}\big(U_2^* f\big)(T_2 \mathbf{x}),
    \end{align*}
which gives the desired result. 
\end{proof}

\begin{thm}[Twisted dyadic Littlewood--Paley equivalences]\label{thm:twisted_dyadic_LP}~\\
    For any $1 < p < \infty$, the twisted dyadic square functions $S_d^{(2)}$ and $S_d^{(3)}$ are bounded operators on $L^p(\mathbb{R}^{2m})$. Furthermore, we have the norm equivalences:
    \begin{align*}
        \big\| S_d^{(i)}(f) \big\|_{L^p(\mathbb{R}^{2m})} \simeq \|f\|_{L^p(\mathbb{R}^{2m})}, \quad \text{for } i \in \{1, 2, 3\},
    \end{align*}
    where the implicit constants depend only on $p$ and the dimension $m$.
\end{thm}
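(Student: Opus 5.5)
The plan is to reduce everything to the standard case $i=1$ through Lemma \ref{lem:dyadic_intertwine} and the $L^p$-isometry of the pullbacks $U_i$, exactly as in the proof of Theorem \ref{thm:twisted_BDG}.

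\textbf{Step 1: the case $i=1$.} The function $S_d^{(1)}$ is the classical bi-parameter dyadic square function. I would identify it with the martingale square function $S_{mart}^{(1)}$. By Proposition \ref{prop:twisted_martingale}(ii), $\Delta_{\mathbf k}^{(1)}f$ is the Haar projection at scale $\mathbf k$. For a fixed $\mathbf k$, the rectangles $R$ of scale $\mathbf k$ are pairwise disjoint. Hence at each point $\mathbf x$ only one $R$ contributes, and
\[
|\Delta_{\mathbf k}^{(1)}f(\mathbf x)|^2=\Big|\sum_{\vec\epsilon}\langle f,h_R^{\vec\epsilon}\rangle h_R^{\vec\epsilon}(\mathbf x)\Big|^2 .
\]
This differs from $\sum_{\vec\epsilon}|\langle f,h_R^{\vec\epsilon}\rangle h_R^{\vec\epsilon}(\mathbf x)|^2$, so the two square functions are not literally equal. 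They are comparable, however. One direction is Cauchy--Schwarz over the finitely many $\vec\epsilon$, which costs a factor $(2^m-1)^2$. For the other direction I would not attempt a pointwise bound. Instead I would cite the classical product dyadic Littlewood--Paley theorem (Chang--Fefferman / Fefferman--Stein), which gives $\|S_d^{(1)}f\|_p\simeq\|f\|_p$ directly.

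If one prefers to derive this from the martingale version, there is a standard route. Apply random signs to each Haar coefficient $(R,\vec\epsilon)$, then use Khintchine's inequality together with the $L^p$ boundedness of Haar multipliers. That boundedness follows from the unconditionality of the product Haar system in $L^p$, $1<p<\infty$, itself a consequence of the bi-parameter BDG inequality. Either way, the constants depend only on $p$ and $m$.

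\textbf{Step 2: transfer to $i=2,3$.} By Lemma \ref{lem:dyadic_intertwine},
\[
S_d^{(i)}(f)=U_i\big(S_d^{(1)}(U_i^*f)\big).
\]
Here $U_i^*g=g\circ T_i^{-1}$ and $T_i$ is a linear bijection with determinant $1$, so both $U_i$ and $U_i^*$ are isometries on every $L^p$. Therefore
\[
\|S_d^{(i)}f\|_p=\|S_d^{(1)}(U_i^*f)\|_p\simeq\|U_i^*f\|_p=\|f\|_p,
\]
with the same constants as for $i=1$. Boundedness is the upper half of this equivalence.

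\textbf{Main obstacle.} The only nontrivial ingredient is Step 1, the classical product $L^p$ equivalence for the Haar-coefficient square function. One should also check that $\langle f,h_R^{\vec\epsilon}\rangle$ makes sense for $f\in L^p$; it does, since each $h_R^{\vec\epsilon}$ is bounded with compact support. The transfer in Step 2 is routine.
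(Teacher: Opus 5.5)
Your proposal is correct and follows the paper's proof. The case $i=1$ is the classical product dyadic Littlewood--Paley theorem, and the cases $i=2,3$ follow from Lemma~\ref{lem:dyadic_intertwine}, the measure-preserving change of variables $\mathbf y=T_i\mathbf x$, and the $L^p$-isometry of $U_i^*$. Your remark that $S_d^{(1)}$ is not literally equal to $S_{mart}^{(1)}$ when $m>1$ is valid extra care that the paper glosses over, but it does not affect the argument, since both you and the paper invoke the classical theorem for $S_d^{(1)}$ directly.
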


\begin{proof}
    The equivalence $\| S_d^{(1)}(f) \|_p \simeq \|f\|_p$ is the classical bi-parameter dyadic Littlewood--Paley theorem established by Fefferman and Stein.
    
    For the twisted cases $i \in \{2, 3\}$, we utilize the exact pointwise intertwining identity from Lemma \ref{lem:dyadic_intertwine}. Since 
    \begin{align*}
        \big\| S_d^{(i)}(f) \big\|_{L^p(\mathbb{R}^{2m})}^p &= \int_{\mathbb{R}^{2m}} \big| S_d^{(i)}(f)(\mathbf{x}) \big|^p \, d\mathbf{x} 
        = \int_{\mathbb{R}^{2m}} \Big| S_d^{(1)}\big(U_i^* f\big)(T_i \mathbf{x}) \Big|^p \, d\mathbf{x},
    \end{align*}
  then from the change of variables $\mathbf{y} = T_i \mathbf{x}$, 
    \begin{align*}
        \int_{\mathbb{R}^{2m}} \Big| S_d^{(1)}\big(U_i^* f\big)(T_i \mathbf{x}) \Big|^p \, d\mathbf{x} &= \int_{\mathbb{R}^{2m}} \Big| S_d^{(1)}\big(U_i^* f\big)(\mathbf{y}) \Big|^p \, d\mathbf{y} 
        = \big\| S_d^{(1)}\big(U_i^* f\big) \big\|_{L^p(\mathbb{R}^{2m})}^p.
    \end{align*}
   Hence, from the classical Fefferman--Stein bi-parameter Littlewood--Paley equivalence and the measure-preserving property of $U_i^*$, 
    \begin{align*}
        \big\| S_d^{(1)}\big(U_i^* f\big) \big\|_p \simeq \big\| U_i^* f \big\|_p= \| f \|_p,\quad\forall\, 1<p<\infty.
    \end{align*}
     This completes the  proof by combining these equivalences together.
\end{proof}

\section{Projected Shards and Admissible Dyadic Structures\\ on the Quotient Group $\mathscr N$}
\label{sec:quotient_shards}
\setcounter{equation}{0}

 In the quotient setting we need to distinguish between two related objects. The first is the family of sets obtained directly from projecting products of Heisenberg tiles; these are the \emph{raw projected shards}, and they reflect the quotient geometry itself. The second is a dyadic family suitable for Haar expansions and discrete square functions. For this purpose we introduce the \emph{analytic dyadic shards}, which are exact inverse images of product blocks under measure-preserving rectifications. They are chosen so as to remain uniformly comparable to the raw projected shards in the relevant regimes. See also \cite{MR4808333}.
\medskip

We write points of the quotient group $\mathscr N$ as
\[
\mathbf g=(\mathbf z,t_1,t_2)
=(\mathbf z_1,\mathbf z_2,\mathbf z_3,t_1,t_2),
\qquad\text{where}\,\,
\mathbf z_\mu\in \mathbb C^{n_\mu},\,\, t_1,t_2\in \mathbb R,
\]
and points of the lifted group $\tilde{\mathscr N}=\mathscr H_1\times
\mathscr H_2\times \mathscr H_3$ as
\[
\tilde{\mathbf g}
=
\big((\mathbf z_1,u_1),(\mathbf z_2,u_2),(\mathbf z_3,u_3)\big).
\]
The quotient homomorphism is given by
\[
\pi(\tilde{\mathbf g})
=
(\mathbf z_1,\mathbf z_2,\mathbf z_3,u_1+u_3,u_2+u_3).
\]

\begin{remark} 
Throughout this section, the notation $T(\mathbf g,2^{\mathbf j})$ refers to the quotient tube of scale $\mathbf j=(j_1,j_2,j_3)$ centered at $\mathbf g$, in the sense of the three twisted quotient regimes arising in the continuous theory. Concretely, the relevant central geometries are:
\begin{itemize}
\item[(i)] product-type with central scales $(2^{2j_1},2^{2j_2})$ when $j_3<j_2$;

\smallskip

\item[(ii)] product-type with central scales $(2^{2j_1},2^{2j_3})$ when $j_2<j_3<j_1$;

\smallskip

\item[(iii)] slanted-type with rectified central scales $(2^{2j_1},2^{2j_3})$ in the coordinates $(u,v)=(t_1-t_2,t_2)$ when $j_1<j_3$.
\end{itemize}
More precisely, \(T(\mathbf g,2^{\mathbf j})\) may denote any model region in the corresponding regime with the stated spatial and central scales; throughout this section it is used only up to uniform comparability.
All such comparisons are understood up to multiplicative constants depending only on the dimensions and on the fixed stacking parameter $\kappa$.
\end{remark}

Throughout the dyadic constructions below, all cubes, intervals, and product blocks are understood in the standard half-open sense. Accordingly, all partitions are exact up to boundary sets of measure zero.

\subsection{Heisenberg tiles and stacked tiles}

For each $\mu\in\{1,2,3\}$ let $\mathscr H_\mu=\mathbb C^{n_\mu}\times \mathbb R$
be the Heisenberg factor with anisotropic dilation
\[
\delta_a(\mathbf z_\mu,u_\mu)=(a\mathbf z_\mu,a^2u_\mu),
\qquad a>0.
\]
Recall from \cite{St,Ty} that there exists a basic tile
\[
T_{o}^{(\mu)}
=
\Big\{(\mathbf y_\mu,s_\mu)\in \mathscr H_\mu:
\mathbf y_\mu\in [0,1)^{2n_\mu},\,
f_{o,\mu}(\mathbf y_\mu)\le s_\mu<f_{o,\mu}(\mathbf y_\mu)+1
\Big\},
\]
where $f_{o,\mu}$ is bounded and continuous on $[0,1)^{2n_\mu}$.  Let
$\mathfrak T_{j_\mu}^{(\mu)}$ be the collection of all translates and dilates
of $T_o^{(\mu)}$ at scale $j_\mu$.  For each fixed $j_\mu$, the family
$\mathfrak T_{j_\mu}^{(\mu)}$ is a partition of $\mathscr H_\mu$, and each tile
is comparable to a Heisenberg ball of radius $2^{j_\mu}$.

Choose a large integer $\kappa\ge 10$ such that
\[
\|f_{o,\mu}\|_{L^\infty([0,1)^{2n_\mu})}\le 2^{\kappa-10},
\qquad \mu=1,2,3.
\]
For $\mu=1,2$, define the stacked basic tile
\[
S_o^{(\mu)}
:=
\bigcup_{m=0}^{2^\kappa-1}
\big(\mathbf 0,m\big)\,T_o^{(\mu)}.
\]
Then $S_o^{(\mu)}$ has the form
\[
S_o^{(\mu)}
=
\Big\{(\mathbf y_\mu,s_\mu):
\mathbf y_\mu\in [0,1)^{2n_\mu},\,
s_\mu\in f_{o,\mu}(\mathbf y_\mu)+[0,2^\kappa)
\Big\}.
\]
Its scale-$j_\mu$ versions are
\[
\delta_{2^{j_\mu}}S_o^{(\mu)}
=
\Big\{(\mathbf z_\mu,t_\mu):
\mathbf z_\mu\in [0,2^{j_\mu})^{2n_\mu},\,
t_\mu\in 2^{2j_\mu}f_{o,\mu}(2^{-j_\mu}\mathbf z_\mu)
+
2^{2j_\mu+\kappa}[0,1)
\Big\}.
\]

For later use, set
\[
\Box_{j_\mu}^{(\mu)+}:=[0,2^{j_\mu})^{2n_\mu},
\qquad
\Box_{\mathbf j}^+
:=
\Box_{j_1}^{(1)+}\times \Box_{j_2}^{(2)+}\times \Box_{j_3}^{(3)+},
\qquad
\mathbf j=(j_1,j_2,j_3)\in\mathbb Z^3.
\]

\begin{figure}[htbp]
\centering
\begin{tikzpicture}[
    x=1.15cm,y=1.15cm,
    axis/.style={-Latex,thick},
    tileline/.style={blue!70!black,thick},
    stackline/.style={orange!70!black,thick},
    tilefill/.style={fill=blue!18,draw=blue!70!black,thick},
    stackfill/.style={fill=orange!18,draw=orange!70!black,thick}
]

\begin{scope}[shift={(0,0)}]
    \draw[axis] (0,0) -- (3.4,0) node[right] {$z_\mu$};
    \draw[axis] (0,0) -- (0,3.2) node[above] {$u_\mu$};

    \fill[tilefill]
        (0.35,0.65)
        plot[smooth] coordinates {(0.35,0.65) (0.85,0.95) (1.40,0.82) (2.00,1.08) (2.75,0.92)}
        -- (2.75,1.72)
        plot[smooth] coordinates {(2.75,1.72) (2.00,1.88) (1.40,1.62) (0.85,1.75) (0.35,1.45)}
        -- cycle;

    \draw[tileline] plot[smooth] coordinates {(0.35,0.65) (0.85,0.95) (1.40,0.82) (2.00,1.08) (2.75,0.92)};
    \draw[tileline] plot[smooth] coordinates {(0.35,1.45) (0.85,1.75) (1.40,1.62) (2.00,1.88) (2.75,1.72)};

    \node at (1.55,1.28) {$T_o^{(\mu)}$};
    \node[align=center] at (1.65,-0.65)
    {\textbf{(a)} Basic Heisenberg tile\\ between $f_{o,\mu}$ and $f_{o,\mu}+1$};
\end{scope}

\begin{scope}[shift={(5.2,0)}]
    \draw[axis] (0,0) -- (3.4,0) node[right] {$z_\mu$};
    \draw[axis] (0,0) -- (0,4.0) node[above] {$u_\mu$};

    \fill[stackfill]
        (0.35,0.65)
        plot[smooth] coordinates {(0.35,0.65) (0.85,0.95) (1.40,0.82) (2.00,1.08) (2.75,0.92)}
        -- (2.75,3.05)
        plot[smooth] coordinates {(2.75,3.05) (2.00,3.22) (1.40,2.95) (0.85,3.08) (0.35,2.78)}
        -- cycle;

    \draw[stackline] plot[smooth] coordinates {(0.35,0.65) (0.85,0.95) (1.40,0.82) (2.00,1.08) (2.75,0.92)};
    \draw[stackline] plot[smooth] coordinates {(0.35,2.78) (0.85,3.08) (1.40,2.95) (2.00,3.22) (2.75,3.05)};

    \foreach \s in {0.45,0.90,1.35,1.80} {
        \draw[orange!55!black,thin]
        plot[smooth] coordinates
        {(0.35,0.65+\s) (0.85,0.95+\s) (1.40,0.82+\s) (2.00,1.08+\s) (2.75,0.92+\s)};
    }

{\color{blue}   \draw[decorate,decoration={brace,amplitude=5pt, mirror}] 
        (3.05,0.70) -- (3.05,2.95)
        node[midway,right=7pt] {$2^\kappa$};}

    \node at (1.55,1.95) {$S_o^{(\mu)}$};
    \node[align=center] at (1.65,-0.65)
    {\textbf{(b)} Stacked tile\\ same profile, enlarged central height};
\end{scope}

\end{tikzpicture}
\caption{Schematic picture of the basic Heisenberg tile $T_o^{(\mu)}$ and the stacked tile $S_o^{(\mu)}$. The horizontal axis represents one schematic spatial direction in $\mathbb C^{n_\mu}$, and the vertical axis represents the central variable. The stacked tile keeps the same irregular lower profile and enlarges the center direction by a factor of order $2^\kappa$.}
\label{fig:heisenberg_tile_stack}
\end{figure}
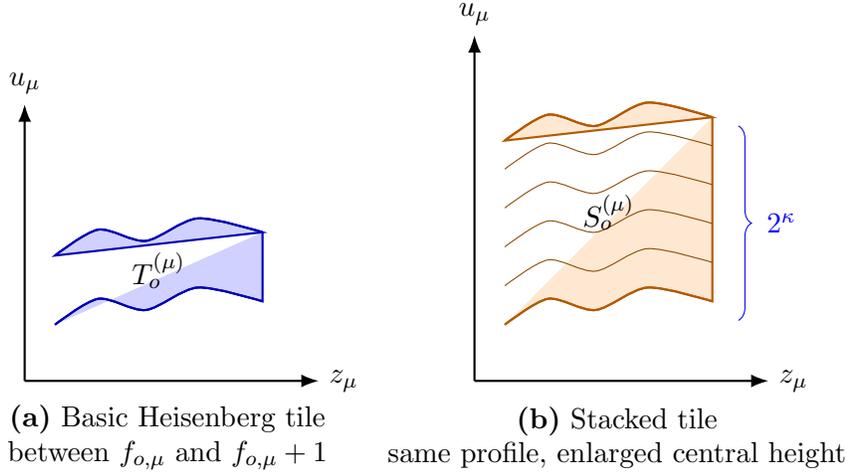

\subsection{Raw projected shards}

We now recall the quotient shard construction coming from products of stacked
tiles in the lifting group.  Throughout this subsection we impose the
convention
\[
j_2\le j_1.
\]
The opposite order is obtained by exchanging the first two Heisenberg factors
and replacing the shear in the last regime by its symmetric counterpart.

For a scale $\mathbf j=(j_1,j_2,j_3)\in \mathbb Z^3$, define the raw pre-shard
\begin{align}
\mathcal S_o^{\mathbf j}
&:=
\delta_{2^{j_1}}S_o^{(1)}\times \delta_{2^{j_2}}S_o^{(2)}
\times \Box_{j_3}^{(3)+}
\notag\\
&=
\Big\{(\mathbf z,t_1,t_2):
\mathbf z\in \Box_{\mathbf j}^+,\,
(t_1,t_2)\in \widehat I_{\mathbf z,\mathbf j}
\Big\},
\label{eq:raw_pre_shard}
\end{align}
where
\begin{align}
\widehat I_{\mathbf z,\mathbf j}
:=
\Big(
2^{2j_1}f_{o,1}(2^{-j_1}\mathbf z_1),
2^{2j_2}f_{o,2}(2^{-j_2}\mathbf z_2)
\Big)
+
2^{2j_1+\kappa}[0,1)\times 2^{2j_2+\kappa}[0,1).
\label{eq:raw_pre_shard_interval}
\end{align}

\begin{lem}[Elementary dyadic interval partition]\label{lem:interval-partition}~\\
Let $\ell>0$, and let $N\in\mathbb N$. Then the collection of intervals
\[
\Big\{[m\ell,(m+1)\ell):m\in\mathbb Z\Big\}
\]
forms a partition of $\mathbb R$.  More generally, for any fixed $a\in \mathbb Z$, the collection
\[
\Big\{[m\ell,(m+N)\ell):m\in a+N\mathbb Z\Big\}
\]
forms a partition of $\mathbb R$ into half-open intervals of length $N\ell$.  Consequently, finite Cartesian products of such interval families form partitions of the corresponding Euclidean space.
\end{lem}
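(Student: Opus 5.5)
The plan is to prove the one-dimensional statements directly from the Archimedean property and the floor function, and then obtain the product statement by a coordinatewise argument. No earlier result of the paper is needed; this is a purely elementary lemma.

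For the first claim, fix $x\in\mathbb R$ and set $m:=\lfloor x/\ell\rfloor\in\mathbb Z$, which is well defined since $\ell>0$. Then $m\le x/\ell<m+1$, that is, $x\in[m\ell,(m+1)\ell)$, so the intervals cover $\mathbb R$. For disjointness, suppose $x\in[m\ell,(m+1)\ell)\cap[m'\ell,(m'+1)\ell)$. Then both $m$ and $m'$ are integers satisfying $m\le x/\ell<m+1$. Since $\lfloor x/\ell\rfloor$ is the unique such integer, we get $m=m'$.

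For the second claim, reduce to the first with step $N\ell$ after a translation. Write $m=a+Nq$ with $q\in\mathbb Z$, so that
\[
[m\ell,(m+N)\ell)=a\ell+[qN\ell,(q+1)N\ell).
\]
The family is therefore the image of $\{[qN\ell,(q+1)N\ell):q\in\mathbb Z\}$ under the translation $x\mapsto x+a\ell$. By the first part applied with $\ell$ replaced by $N\ell>0$, this family partitions $\mathbb R$. A translation is a bijection of $\mathbb R$, so it carries partitions to partitions. Each interval has length $(m+N)\ell-m\ell=N\ell$.

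For the product claim, let $\mathcal P_1,\dots,\mathcal P_d$ be partitions of $\mathbb R$ (or of Euclidean factors) of the above type. Given $x=(x_1,\dots,x_d)$, each $x_i$ lies in a unique $P_i\in\mathcal P_i$. Hence $x$ lies in $P_1\times\cdots\times P_d$. Conversely, if $x\in P_1\times\cdots\times P_d$ and $x\in P_1'\times\cdots\times P_d'$, then $x_i\in P_i\cap P_i'$ for every $i$, which forces $P_i=P_i'$ for each $i$. Thus the product blocks partition the product space, and induction on $d$ is immediate.

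None of these steps is a genuine obstacle. The only point needing care is the \emph{half-open} convention: it is exactly what makes the floor-function selection unique and avoids double counting at endpoints. This is consistent with the paper's convention that all intervals and blocks are half-open, with boundary effects of measure zero in any case.
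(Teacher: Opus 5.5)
Your proof is correct and follows the same route as the paper. The paper only notes that the first claim is standard, that the left endpoints in the second claim form an arithmetic progression of step $N\ell$, and that the product claim is immediate; your floor-function argument, the translation via $m=a+Nq$, and the coordinatewise uniqueness argument make these remarks explicit, implicitly using $N\ge 1$, which is the intended reading of $N\in\mathbb N$.
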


\begin{proof}
The first statement is standard. For the second, the left endpoints form an arithmetic progression of step $N\ell$, hence the corresponding half-open intervals tile $\mathbb R$ without overlap except at endpoints. The product statement is immediate.
\end{proof}

\begin{lem}[Spatial partition at scale $\mathbf j$]\label{lem:spatial-partition}~\\
For each $\mathbf j=(j_1,j_2,j_3)\in \mathbb Z^3$, the translates of
\[
\Box_{\mathbf j}^{+}
=
[0,2^{j_1})^{2n_1}\times [0,2^{j_2})^{2n_2}\times [0,2^{j_3})^{2n_3}
\]
by the lattice
\[
2^{j_1}\mathbb Z^{2n_1}\times 2^{j_2}\mathbb Z^{2n_2}\times 2^{j_3}\mathbb Z^{2n_3}
\]
form a partition of $\mathbb C^{n_1+n_2+n_3}\cong \mathbb R^{2n_1+2n_2+2n_3}$.
\end{lem}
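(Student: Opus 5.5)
The plan is to reduce the statement to the one-dimensional partition in Lemma~\ref{lem:interval-partition}, applied coordinate by coordinate. Identify $\mathbb C^{n_1+n_2+n_3}$ with $\mathbb R^{N}$, where $N=2n_1+2n_2+2n_3$, via real coordinates. In these coordinates the box $\Box_{\mathbf j}^+$ is a Cartesian product of $N$ half-open intervals. Exactly $2n_\mu$ of the factors are $[0,2^{j_\mu})$, one for each $\mu\in\{1,2,3\}$. The lattice $2^{j_1}\mathbb Z^{2n_1}\times 2^{j_2}\mathbb Z^{2n_2}\times 2^{j_3}\mathbb Z^{2n_3}$ is likewise a product of the one-dimensional lattices $2^{j_\mu}\mathbb Z$, one for each coordinate. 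So a translate $\mathbf a+\Box_{\mathbf j}^+$ is the product over coordinates $r=1,\dots,N$ of the intervals $[m_r\ell_r,(m_r+1)\ell_r)$, where $\ell_r=2^{j_{\mu(r)}}$, $m_r\in\mathbb Z$, and $\mu(r)$ is the block containing coordinate $r$.

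First, for each coordinate $r$, Lemma~\ref{lem:interval-partition} with $\ell=\ell_r$ (and $N=1$) says that the intervals $[m\ell_r,(m+1)\ell_r)$, $m\in\mathbb Z$, partition $\mathbb R$.

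Next comes the product step, which I will write out rather than cite as ``immediate''. Fix a point $\mathbf x=(x_1,\dots,x_N)$. For each $r$ there is exactly one integer $m_r=\lfloor x_r/\ell_r\rfloor$ with $x_r\in[m_r\ell_r,(m_r+1)\ell_r)$. Then $\mathbf x$ lies in the translate by $\mathbf a=(m_r\ell_r)_r$, which gives covering. For disjointness, suppose $\mathbf x$ lies in the translates by two lattice points $\mathbf a$ and $\mathbf a'$. Then for every $r$ the coordinate $x_r$ lies in both $[a_r,a_r+\ell_r)$ and $[a'_r,a'_r+\ell_r)$ with $a_r,a'_r\in\ell_r\mathbb Z$. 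The one-dimensional uniqueness then forces $a_r=a'_r$ for every $r$, so $\mathbf a=\mathbf a'$. Distinct lattice points therefore give disjoint translates, and their union is all of $\mathbb R^N$.

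There is no real obstacle here. The only point needing care is the bookkeeping that each real coordinate in block $\mu$ uses the side length $2^{j_\mu}$, independently of the other blocks. Because everything is half-open, the partition is exact, not merely exact up to null sets.
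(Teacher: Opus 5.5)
Your proof is correct and follows the same approach as the paper, which says only that the family is the Cartesian product of the standard half-open dyadic partitions in each spatial factor. The difference is that you write out the covering step (via $m_r=\lfloor x_r/\ell_r\rfloor$) and the coordinatewise uniqueness for disjointness, which the paper leaves implicit.
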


\begin{proof}
This is the Cartesian product of the standard half-open dyadic cube partitions in each spatial factor.
\end{proof}

\begin{lem}\label{lem:raw_pre_shard_sizes}
For each fixed $\mathbf z\in \Box_{\mathbf j}^+$, the central fiber of the raw pre-shard $\mathcal S_o^{\mathbf j}$ is a rectangle whose side lengths are exactly
\[
2^{2j_1+\kappa}
\qquad\text{and}\qquad
2^{2j_2+\kappa}.
\]
\end{lem}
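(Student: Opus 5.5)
The plan is to read the central fiber directly off the description \eqref{eq:raw_pre_shard}--\eqref{eq:raw_pre_shard_interval}; no partition result is needed. We fix $\mathbf z=(\mathbf z_1,\mathbf z_2,\mathbf z_3)\in\Box_{\mathbf j}^+$. By the product definition $\mathcal S_o^{\mathbf j}=\delta_{2^{j_1}}S_o^{(1)}\times\delta_{2^{j_2}}S_o^{(2)}\times\Box_{j_3}^{(3)+}$, a point $(\mathbf z,t_1,t_2)$ lies in $\mathcal S_o^{\mathbf j}$ exactly when three conditions hold: $(\mathbf z_1,t_1)\in\delta_{2^{j_1}}S_o^{(1)}$, $(\mathbf z_2,t_2)\in\delta_{2^{j_2}}S_o^{(2)}$, and $\mathbf z_3\in\Box_{j_3}^{(3)+}$. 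The third factor only restricts $\mathbf z_3$ and gives no central variable, so it plays no role in the fiber.

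First, we use the explicit form of the scale-$j_\mu$ stacked tiles given earlier. For $\mathbf z_\mu\in[0,2^{j_\mu})^{2n_\mu}$, the condition $(\mathbf z_\mu,t_\mu)\in\delta_{2^{j_\mu}}S_o^{(\mu)}$ is equivalent to
\[
t_\mu\in 2^{2j_\mu}f_{o,\mu}(2^{-j_\mu}\mathbf z_\mu)+2^{2j_\mu+\kappa}[0,1).
\]
This itself follows from the dilation $\delta_{2^{j_\mu}}(\mathbf y,s)=(2^{j_\mu}\mathbf y,2^{2j_\mu}s)$ applied to $S_o^{(\mu)}=\{\mathbf y\in[0,1)^{2n_\mu},\ s\in f_{o,\mu}(\mathbf y)+[0,2^\kappa)\}$. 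That description in turn comes from stacking the $2^\kappa$ translates $(\mathbf 0,m)T_o^{(\mu)}$: the intervals $f_{o,\mu}(\mathbf y)+[m,m+1)$ for $0\le m<2^\kappa$ are consecutive and half-open, so their union is $f_{o,\mu}(\mathbf y)+[0,2^\kappa)$ (Lemma \ref{lem:interval-partition} with $\ell=1$).

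Combining the two central conditions, the fiber over $\mathbf z$ is exactly the translate $\widehat I_{\mathbf z,\mathbf j}$ of the product $2^{2j_1+\kappa}[0,1)\times2^{2j_2+\kappa}[0,1)$. The translation is by the vector $\big(2^{2j_1}f_{o,1}(2^{-j_1}\mathbf z_1),\,2^{2j_2}f_{o,2}(2^{-j_2}\mathbf z_2)\big)$, which depends only on $\mathbf z$. Since $f_{o,\mu}$ is bounded, this vector is a finite point of $\mathbb R^2$. A translate of a half-open rectangle is a half-open rectangle with the same side lengths, namely $2^{2j_1+\kappa}$ and $2^{2j_2+\kappa}$, as claimed.

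The only step requiring care is the stacking identity, specifically that the stacked translates do not overlap and leave no gaps. This holds because the group law on $\mathscr H_\mu$ gives $(\mathbf 0,m)(\mathbf y,s)=(\mathbf y,s+m)$, a pure central shift with no symplectic correction term when the horizontal part is zero.
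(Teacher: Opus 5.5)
Your proposal is correct and takes the paper's route: the paper's proof just reads the two side lengths off the translate formula \eqref{eq:raw_pre_shard_interval}, exactly as you do. Your extra checks are consistent with the paper's definitions; these are the dilation computation and the group law $(\mathbf 0,m)(\mathbf y,s)=(\mathbf y,s+m)$ behind the stacking. They justify the explicit descriptions of $S_o^{(\mu)}$ and $\delta_{2^{j_\mu}}S_o^{(\mu)}$, which the paper states without proof.
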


\begin{proof}
This is immediate from \eqref{eq:raw_pre_shard_interval}.
\end{proof}

The actual raw projected basic shards are then obtained by taking unions of
translates of $\mathcal S_o^{\mathbf j}$ in the center variables.

\medskip
\noindent
\textbf{Case I: $j_3<j_2$.}
Define
\begin{align}
\mathcal P_o^{\mathbf j,(1)}
:=
\bigcup_{m_1,m_2\in\{0,-1\}}
\big(\mathbf 0,m_12^{2j_1+\kappa},m_22^{2j_2+\kappa}\big)\,
\mathcal S_o^{\mathbf j}.
\label{eq:raw_shard_case1}
\end{align}

\medskip
\noindent
\textbf{Case II: $j_2<j_3<j_1$.}
Let
\[
\Gamma_1
:=
\left\{
-2^{2(j_3-j_2)},\ldots,-1,0,1,\ldots,2^{2(j_3-j_2)}-1
\right\},
\]
and define
\begin{align}
\mathcal P_o^{\mathbf j,(2)}
:=
\bigcup_{\substack{m_1\in\{0,-1\}\\ m_2\in\Gamma_1}}
\big(\mathbf 0,m_12^{2j_1+\kappa},m_22^{2j_2+\kappa}\big)\,
\mathcal S_o^{\mathbf j}.
\label{eq:raw_shard_case2}
\end{align}

\medskip
\noindent
\textbf{Case III: $j_1<j_3$.}
First set
\[
\Gamma_2
:=
\left\{
-2^{2(j_1-j_2)},\ldots,-1,0,1,\ldots,2^{2(j_1-j_2)}-1
\right\},
\]
and define the intermediate block
\begin{align}
\widehat{\mathcal P}_o^{\mathbf j}
:=
\bigcup_{\substack{m_1\in\{0,-1\}\\ m_2\in\Gamma_2}}
\big(\mathbf 0,m_12^{2j_1+\kappa},m_22^{2j_2+\kappa}\big)\,
\mathcal S_o^{\mathbf j}.
\label{eq:raw_shard_case3hat}
\end{align}
Next add the two neighboring copies in the $t_1$-direction:
\begin{align}
\check{\mathcal P}_o^{\mathbf j}
:=
\big(\mathbf 0,-2\cdot 2^{2j_1+\kappa},0\big)\,\widehat{\mathcal P}_o^{\mathbf j}
\cup
\widehat{\mathcal P}_o^{\mathbf j}
\cup
\big(\mathbf 0,2\cdot 2^{2j_1+\kappa},0\big)\,\widehat{\mathcal P}_o^{\mathbf j}.
\label{eq:raw_shard_case3check}
\end{align}
Finally, let
\[
\Gamma_3
:=
\left\{
-2^{2(j_3-j_1)},\ldots,-2,0,2,\ldots,2^{2(j_3-j_1)}-2
\right\},
\]
and define
\begin{align}
\mathcal P_o^{\mathbf j,(3)}
:=
\bigcup_{m\in \Gamma_3}
\big(\mathbf 0,m2^{2j_1+\kappa},m2^{2j_1+\kappa}\big)\,
\check{\mathcal P}_o^{\mathbf j}.
\label{eq:raw_shard_case3}
\end{align}

 For later use, we make the translation lattices explicit.  For the spatial variables, define
\[
\Lambda_{\mathrm{sp}}(\mathbf j)
:=
2^{j_1}\mathbb Z^{2n_1}\times 2^{j_2}\mathbb Z^{2n_2}\times 2^{j_3}\mathbb Z^{2n_3}.
\]

In Case I, set
\[
L_1:=2^{2j_1+\kappa},
\qquad
L_2:=2^{2j_2+\kappa},
\]
and define the central translation lattice
\[
\Lambda_{\mathrm{cen}}^{(1)}(\mathbf j)
:=
(2L_1)\mathbb Z\times (2L_2)\mathbb Z.
\]

In Case II, set
\[
L_1:=2^{2j_1+\kappa},
\qquad
L_2:=2^{2j_2+\kappa},
\qquad
N:=2^{2(j_3-j_2)},
\]
and define
\[
\Lambda_{\mathrm{cen}}^{(2)}(\mathbf j)
:=
(2L_1)\mathbb Z\times (2NL_2)\mathbb Z.
\]

In Case III, set
\[
L:=2^{2j_1+\kappa},
\qquad
N:=2^{2(j_3-j_1)}.
\]
In the rectified coordinates
\[
(u,v):=(t_1-t_2,t_2),
\]
define
\[
\widetilde\Lambda_{\mathrm{cen}}^{(3)}(\mathbf j)
:=
(6L)\mathbb Z\times (2NL)\mathbb Z.
\]
Equivalently, in the original $(t_1,t_2)$-coordinates we define
\[
\Lambda_{\mathrm{cen}}^{(3)}(\mathbf j)
:=
\Theta^{-1}\big(\widetilde\Lambda_{\mathrm{cen}}^{(3)}(\mathbf j)\big),
\]
where $\Theta(t_1,t_2)=(t_1-t_2,t_2)$.

For $k=1,2,3$, we let $\mathfrak P_{\mathbf j}^{(k)}$ denote the family of all translates of the corresponding basic raw shard by
\[
\Lambda_{\mathrm{sp}}(\mathbf j)\times \Lambda_{\mathrm{cen}}^{(k)}(\mathbf j).
\]
In Case III, this means equivalently that after passing to the rectified coordinates
$(u,v)=(t_1-t_2,t_2)$, the central translates are taken with respect to
$\widetilde\Lambda_{\mathrm{cen}}^{(3)}(\mathbf j)$.

\begin{lem}[Central partition in Case I]\label{lem:center-partition-caseI}~\\
Assume that $j_3<j_2\le j_1$, and set
\[
L_1:=2^{2j_1+\kappa},
\qquad
L_2:=2^{2j_2+\kappa}.
\]
Then, for each fixed spatial translate of $\Box_{\mathbf j}^{+}$, the central fibers of the lattice translates of $\mathcal P_o^{\mathbf j,(1)}$ form a partition of $\mathbb R^2$. Each such central fiber is a rectangle of exact side lengths
\[
2L_1
\qquad\text{and}\qquad
2L_2.
\]
In particular, it is uniformly comparable to a rectangle of side lengths $L_1$ and $L_2$.
\end{lem}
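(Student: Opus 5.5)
We fix a spatial translate of $\Box_{\mathbf j}^{+}$, i.e.\ a point $\mathbf z$ in a fixed cube of the partition from Lemma \ref{lem:spatial-partition}, and compute the central fiber of $\mathcal P_o^{\mathbf j,(1)}$ over $\mathbf z$ directly from \eqref{eq:raw_shard_case1} and \eqref{eq:raw_pre_shard_interval}. Left translation by a purely central element $(\mathbf 0,a,b)$ does not move the spatial variable and shifts the central variables by $(a,b)$, because the Heisenberg group law adds a symplectic correction only when both factors have nonzero spatial parts. Hence the fiber of $(\mathbf 0,m_1L_1,m_2L_2)\,\mathcal S_o^{\mathbf j}$ over $\mathbf z$ is $\widehat I_{\mathbf z,\mathbf j}+(m_1L_1,m_2L_2)$. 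Writing $c(\mathbf z)=(c_1,c_2):=\big(2^{2j_1}f_{o,1}(2^{-j_1}\mathbf z_1),2^{2j_2}f_{o,2}(2^{-j_2}\mathbf z_2)\big)$, we get
\[
\widehat I_{\mathbf z,\mathbf j}+(m_1L_1,m_2L_2)=c(\mathbf z)+[m_1L_1,(m_1+1)L_1)\times[m_2L_2,(m_2+1)L_2).
\]
Taking the union over $m_1,m_2\in\{0,-1\}$, the two adjacent half-open intervals in each coordinate merge. So the fiber is exactly $c(\mathbf z)+[-L_1,L_1)\times[-L_2,L_2)$, a rectangle with side lengths $2L_1$ and $2L_2$. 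This uses Lemma \ref{lem:raw_pre_shard_sizes} for the base rectangle.

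Next we handle the lattice translates. A translate by $(\boldsymbol\lambda,a,b)$ with $\boldsymbol\lambda\in\Lambda_{\mathrm{sp}}(\mathbf j)$ and $(a,b)\in\Lambda^{(1)}_{\mathrm{cen}}(\mathbf j)=(2L_1)\mathbb Z\times(2L_2)\mathbb Z$ moves the spatial cube. One must decide whether "translate" means Euclidean translation or group left translation. With group translation, the central coordinates of the fiber over the translated cube acquire a shift $\tfrac12\,\mathrm{Im}$-type term that depends on $\mathbf z$ (or a $2\,\mathrm{Im}\langle\cdot,\cdot\rangle$ term). This term is the same for every central translate sharing the same spatial displacement $\boldsymbol\lambda$. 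Therefore, over a fixed spatial cube $\boldsymbol\lambda+\Box_{\mathbf j}^{+}$ and a fixed point $\mathbf z'$ in it, the fibers of all translates with that $\boldsymbol\lambda$ are
\[
d(\mathbf z')+(a,b)+[-L_1,L_1)\times[-L_2,L_2),\qquad (a,b)\in(2L_1)\mathbb Z\times(2L_2)\mathbb Z,
\]
with a common offset $d(\mathbf z')\in\mathbb R^2$. Only translates with this $\boldsymbol\lambda$ meet the fiber over $\mathbf z'$, by Lemma \ref{lem:spatial-partition}.

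By Lemma \ref{lem:interval-partition}, applied with $\ell=L_1$, $N=2$, $a=-1$ in the first coordinate (and likewise in the second), the intervals $[(2k-1)L_1,(2k+1)L_1)$ partition $\mathbb R$. A common shift by $d(\mathbf z')$ preserves this, and the product statement gives a partition of $\mathbb R^2$ into rectangles of exact sides $2L_1,2L_2$. Comparability with an $L_1\times L_2$ rectangle is then trivial, with constant $2$.

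The main obstacle is the bookkeeping in the second step: showing that the offset induced by the group law is independent of the central translation parameter. This reduces the problem to a rigid translate of a lattice tiling, after which everything is one-dimensional interval arithmetic.
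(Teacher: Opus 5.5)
Your proposal is correct and follows essentially the same route as the paper: fix a spatial point, merge the adjacent half-open intervals from $m_1,m_2\in\{0,-1\}$ into the fiber $c(\mathbf z)+[-L_1,L_1)\times[-L_2,L_2)$ of exact sides $2L_1,2L_2$, and apply Lemma~\ref{lem:interval-partition} to the $(2L_1)\mathbb Z\times(2L_2)\mathbb Z$ translates. You also handle a point the paper's proof leaves implicit: under group rather than Euclidean translation, the group-law offset depends only on the spatial displacement, so it is common to all central lattice translates over a fixed spatial cube and does not affect the partition.
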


\begin{proof}
Fix the spatial variables $\mathbf z\in \Box_{\mathbf j}^+$. By Lemma~\ref{lem:raw_pre_shard_sizes},
the central fiber of $\mathcal S_o^{\mathbf j}$ at $\mathbf z$ is a half-open rectangle
\[
I_{\mathbf z}^{(1)}\times I_{\mathbf z}^{(2)}
\]
with
\[
|I_{\mathbf z}^{(1)}|=L_1,
\qquad
|I_{\mathbf z}^{(2)}|=L_2.
\]
The union over $m_1,m_2\in\{0,-1\}$ in \eqref{eq:raw_shard_case1} therefore produces
\[
\big(I_{\mathbf z}^{(1)}-L_1\big)\cup I_{\mathbf z}^{(1)}
\quad\times\quad
\big(I_{\mathbf z}^{(2)}-L_2\big)\cup I_{\mathbf z}^{(2)},
\]
which is a half-open rectangle of exact side lengths $2L_1$ and $2L_2$.

Now translate these rectangles by the lattice
\[
\Lambda_{\mathrm{cen}}^{(1)}(\mathbf j)=(2L_1)\mathbb Z\times (2L_2)\mathbb Z.
\]
By Lemma~\ref{lem:interval-partition}, the translated fibers partition $\mathbb R^2$ up to boundary overlap only. Since this holds for every fixed spatial fiber, the conclusion follows.
\end{proof}

\begin{lem}[Central partition in Case II]\label{lem:center-partition-caseII}~\\
Assume that $j_2<j_3<j_1$, and set
\[
L_1:=2^{2j_1+\kappa},
\qquad
L_2:=2^{2j_2+\kappa},
\qquad
N:=2^{2(j_3-j_2)}.
\]
Then, for each fixed spatial translate of $\Box_{\mathbf j}^{+}$, the central fibers of the lattice translates of $\mathcal P_o^{\mathbf j,(2)}$ form a partition of $\mathbb R^2$ with exact period cell step size
\[
(2L_1,\ 2NL_2).
\]
Equivalently, each such central fiber is a rectangle of exact side lengths
\[
2L_1
\qquad\text{and}\qquad
2NL_2.
\]
In particular, it is uniformly comparable to a rectangle of side lengths $L_1$ and $NL_2$.
\end{lem}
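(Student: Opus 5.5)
As in Lemma~\ref{lem:center-partition-caseI}, I would fix the spatial variables and reduce the statement to a one-dimensional computation in each central coordinate. Fix $\mathbf z\in \Box_{\mathbf j}^+$. By Lemma~\ref{lem:raw_pre_shard_sizes} and \eqref{eq:raw_pre_shard_interval}, the central fiber of $\mathcal S_o^{\mathbf j}$ at $\mathbf z$ is a half-open rectangle $I_{\mathbf z}^{(1)}\times I_{\mathbf z}^{(2)}$. Its sides are
\[
I_{\mathbf z}^{(1)}=[a_1,a_1+L_1),\qquad I_{\mathbf z}^{(2)}=[a_2,a_2+L_2),
\]
with $a_1=2^{2j_1}f_{o,1}(2^{-j_1}\mathbf z_1)$ and $a_2=2^{2j_2}f_{o,2}(2^{-j_2}\mathbf z_2)$.

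The translation $(\mathbf 0,m_1L_1,m_2L_2)$ acts on central fibers simply by adding $(m_1L_1,m_2L_2)$. This is because the $\mathbf z$-component of the translating element is $\mathbf 0$, so the Heisenberg-type group law introduces no symplectic correction in the center. Hence the central fiber of $\mathcal P_o^{\mathbf j,(2)}$ at $\mathbf z$ is the product of two unions:
\[
\bigcup_{m_1\in\{0,-1\}}\big(I_{\mathbf z}^{(1)}+m_1L_1\big)
\quad\text{and}\quad
\bigcup_{m_2\in\Gamma_1}\big(I_{\mathbf z}^{(2)}+m_2L_2\big).
\]

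The key step is to check that each union is a single half-open interval of the asserted length. The first union is $[a_1-L_1,a_1+L_1)$, exactly as in Case I. For the second, $\Gamma_1=\{-N,\dots,N-1\}$ consists of $2N$ consecutive integers. The intervals $[a_2+m_2L_2,a_2+(m_2+1)L_2)$ are half-open of length $L_2$ with left endpoints spaced by exactly $L_2$. Consecutive intervals therefore abut without gaps or overlap, and their union is
\[
[a_2-NL_2,\ a_2+NL_2),
\]
of length $2NL_2$. The fiber is thus a half-open rectangle of exact side lengths $2L_1$ and $2NL_2$.

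Next I would translate by $\Lambda_{\mathrm{cen}}^{(2)}(\mathbf j)=(2L_1)\mathbb Z\times(2NL_2)\mathbb Z$. The translates of $[a_1-L_1,a_1+L_1)$ by $2L_1\mathbb Z$ have left endpoints forming an arithmetic progression of step $2L_1$, equal to the interval length. By (the shifted form of) Lemma~\ref{lem:interval-partition}, they partition $\mathbb R$, and likewise in the second coordinate. Taking the product gives a partition of $\mathbb R^2$ with period cell $(2L_1,2NL_2)$. The offsets $a_1,a_2$ depend on $\mathbf z$, but a translate of a lattice partition is still a partition, so this holds for every $\mathbf z$. For a general spatial translate of $\Box_{\mathbf j}^+$ by $\Lambda_{\mathrm{sp}}(\mathbf j)$, the same argument applies after absorbing the translation: the resulting central shift is again a fixed vector depending on the spatial point, hence harmless. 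Finally, $2L_1\simeq L_1$ and $2NL_2\simeq NL_2$ with absolute constants, which gives the comparability claim.

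The only point requiring care is the contiguity of the $2N$ stacked copies, i.e.\ that the step $L_2$ equals the fiber length from Lemma~\ref{lem:raw_pre_shard_sizes}. This is exact by construction, so I expect no real obstacle beyond bookkeeping of half-open endpoints.
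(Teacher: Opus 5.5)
Your proposal is correct and follows the paper's proof essentially step for step. You fix $\mathbf z$, concatenate the two adjacent copies in $t_1$ and the $2N$ consecutive copies in $t_2$ into a single half-open rectangle of exact sides $2L_1$ and $2NL_2$, and then invoke Lemma~\ref{lem:interval-partition} for the lattice $(2L_1)\mathbb Z\times(2NL_2)\mathbb Z$. Your extra remarks are correct: central translations act on fibers by plain addition, and the $\mathbf z$-dependent offsets, including any group-law correction coming from spatial translations, are harmless for the partition property. The paper leaves both points implicit.
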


\begin{proof}
Fix the spatial variables $\mathbf z\in \Box_{\mathbf j}^+$. As above, the first central direction is treated exactly as in Case I, so the union over $m_1\in\{0,-1\}$ produces a half-open interval of exact length $2L_1$.

For the second central direction, the basic interval has length $L_2$, and
\[
\Gamma_1=\{-N,-N+1,\dots,-1,0,1,\dots,N-1\}
\]
contains exactly $2N$ consecutive integers. Hence the intervals
\[
I_{\mathbf z}^{(2)}+m_2L_2,
\qquad m_2\in\Gamma_1,
\]
are $2N$ consecutive half-open intervals of common length $L_2$, and therefore their union is a single half-open interval of exact length
\[
(2N)L_2.
\]

Thus the full central fiber of $\mathcal P_o^{\mathbf j,(2)}$ is a half-open rectangle of exact side lengths
\[
2L_1
\qquad\text{and}\qquad
2NL_2.
\]
Translating by
\[
\Lambda_{\mathrm{cen}}^{(2)}(\mathbf j)=(2L_1)\mathbb Z\times (2NL_2)\mathbb Z
\]
partitions $\mathbb R^2$ by Lemma~\ref{lem:interval-partition}. This proves the claim.
\end{proof}

\begin{lem}[Central partition in Case III]\label{lem:center-partition-caseIII}~\\
Assume that $j_1<j_3$, and set
\[
L:=2^{2j_1+\kappa},
\qquad
N:=2^{2(j_3-j_1)}.
\]
Then, in the rectified coordinates
\[
(u,v):=(t_1-t_2,t_2),
\]
the central fibers of the lattice translates of $\mathcal P_o^{\mathbf j,(3)}$ form a partition of the $(u,v)$-plane by translates of a fixed half-open strip-type cell $\Omega_{L,N}$, with period lattice
\[
\widetilde\Lambda_{\mathrm{cen}}^{(3)}(\mathbf j)
=
(6L)\mathbb Z\times (2NL)\mathbb Z.
\]
Moreover, $\Omega_{L,N}$ is uniformly comparable to a rectangle of side lengths
\[
L
\qquad\text{and}\qquad
NL.
\]
\end{lem}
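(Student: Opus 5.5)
The plan is to argue fiberwise, exactly as in Lemmas~\ref{lem:center-partition-caseI} and \ref{lem:center-partition-caseII}. We fix the spatial variable $\mathbf z\in\Box_{\mathbf j}^+$, so that the profile offsets $2^{2j_\mu}f_{o,\mu}(2^{-j_\mu}\mathbf z_\mu)$ are constants. Then we track the central fiber through the three stages \eqref{eq:raw_shard_case3hat}, \eqref{eq:raw_shard_case3check}, \eqref{eq:raw_shard_case3}, and only at the end pass to the coordinates $(u,v)=\Theta(t_1,t_2)$.

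\textbf{Step 1: the fiber of $\widehat{\mathcal P}_o^{\mathbf j}$ is a square.} By Lemma~\ref{lem:raw_pre_shard_sizes}, the fiber of $\mathcal S_o^{\mathbf j}$ is a half-open rectangle $I^{(1)}_{\mathbf z}\times I^{(2)}_{\mathbf z}$ with $|I^{(1)}_{\mathbf z}|=L$ and $|I^{(2)}_{\mathbf z}|=L_2=2^{2j_2+\kappa}$.
\begin{itemize}
\item The union over $m_1\in\{0,-1\}$ gives a $t_1$-interval of length $2L$.
\item $\Gamma_2$ consists of $2\cdot 2^{2(j_1-j_2)}$ consecutive integers, so, as in Lemma~\ref{lem:center-partition-caseII}, the union in $t_2$ is a single half-open interval of length $2\cdot2^{2(j_1-j_2)}L_2=2L$.
\end{itemize}
Hence the fiber is a half-open square $Q=A\times B$ with $|A|=|B|=2L$.

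\textbf{Step 2: the fiber of $\check{\mathcal P}_o^{\mathbf j}$ is a rectangle.} The two neighbours at $t_1$-offsets $\pm 2L$ are adjacent to $Q$. So the fiber of $\check{\mathcal P}_o^{\mathbf j}$ is $R=A'\times B$ with $|A'|=6L$.

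\textbf{Step 3: diagonal stacking becomes vertical stacking after rectification.} Since $\Theta(t_1,t_2)=(t_1-t_2,t_2)$ is linear, it maps the translation by $(mL,mL)$ to the translation by $(0,mL)$. Also,
\[
\Theta(R)=\{(u,v):\ v\in B,\ u\in A'-v\}.
\]
The set $\Gamma_3$ consists of the $N$ even integers $-N,\dots,N-2$, so the translates $\Theta(R)+(0,mL)$ have $v$-ranges $B+mL$ that are consecutive and disjoint, each of length $2L$. Their union
\[
\Omega_{L,N}:=\Theta\big(\text{fiber of }\mathcal P_o^{\mathbf j,(3)}\big)
\]
therefore has two properties:
\begin{itemize}
\item its $v$-projection is a half-open interval $V$ of length $2NL$;
\item for each $v\in V$, the horizontal slice $\{u:(u,v)\in\Omega_{L,N}\}$ is a single half-open interval of length exactly $6L$, namely $A'-(v-mL)$ for the unique $m$ with $v\in B+mL$.
\end{itemize}

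\textbf{Step 4: partition.} We translate $\Omega_{L,N}$ by $(6L)\mathbb Z\times(2NL)\mathbb Z$.
\begin{itemize}
\item The $v$-translates by $2NL\mathbb Z$ of $V$ partition $\mathbb R$, by Lemma~\ref{lem:interval-partition}. Hence every $v$ lies in exactly one vertical translate.
\item On that horizontal line, the slice is an interval of length $6L$, and its translates by $6L\mathbb Z$ partition the line, again by Lemma~\ref{lem:interval-partition}.
\end{itemize}
Together these give the partition of the $(u,v)$-plane, up to boundary sets of measure zero.

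\textbf{Step 5: comparability.} Within the $m$-th piece, the shift $v-mL$ ranges over $B$. Therefore every slice satisfies
\[
\bigcap_{s\in B}(A'-s)\ \subset\ \text{slice}\ \subset\ A'-B.
\]
These are intervals of lengths $4L$ and $8L$ respectively, with endpoints fixed independently of $m$, since the pieces are translated only in $v$. Consequently
\[
\Big(\bigcap_{s\in B}(A'-s)\Big)\times V\ \subset\ \Omega_{L,N}\ \subset\ (A'-B)\times V,
\]
which gives rectangles of sides $4L\times 2NL$ and $8L\times 2NL$. Both are comparable to $L\times NL$ with absolute constants, as the lemma requires.

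The main obstacle is Step 3: getting the index bookkeeping of $\Gamma_3$ and the $\pm 2L$ copies right, so that the vertical pieces are exactly adjacent. Once that is checked, the rest is routine, and the key observation is that the sheared parallelogram slices all have the same length $6L$.
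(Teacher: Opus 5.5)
Your proposal is correct and follows the same route as the paper. You track the fiber through $\widehat{\mathcal P}_o^{\mathbf j}$ (a $2L\times 2L$ square) and then $\check{\mathcal P}_o^{\mathbf j}$ ($t_1$-length $6L$), and you observe that $\Theta$ turns the diagonal stacking over $\Gamma_3$ into $N$ adjacent vertical copies of height $2L$. Two of your steps are more careful than the paper's. Your horizontal-slice argument for the partition is needed because the cell is not a product: the paper applies Lemma~\ref{lem:interval-partition} directly to the non-rectangular sawtooth cell. Your explicit sandwich $\big(\bigcap_{s\in B}(A'-s)\big)\times V\subset\Omega_{L,N}\subset (A'-B)\times V$ justifies the comparability, which the paper asserts only on the grounds that the constants $6$ and $2$ are fixed.
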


\begin{proof}
We divide the construction into three steps.

\smallskip
\noindent
\textit{Step 1: the intermediate block $\widehat{\mathcal P}_o^{\mathbf j}$.}
By Lemma~\ref{lem:raw_pre_shard_sizes}, the raw pre-shard $\mathcal S_o^{\mathbf j}$ has central side lengths
\[
2^{2j_1+\kappa}=L
\qquad\text{and}\qquad
2^{2j_2+\kappa}.
\]
Let
\[
M:=2^{2(j_1-j_2)}.
\]
Then
\[
\Gamma_2=\{-M,-M+1,\dots,-1,0,1,\dots,M-1\}
\]
contains exactly $2M$ consecutive integers. Hence in the second central direction the union in
\eqref{eq:raw_shard_case3hat} concatenates exactly $2M$ consecutive half-open intervals, each of length $2^{2j_2+\kappa}$. Therefore the resulting second central side length is
\[
(2M)\,2^{2j_2+\kappa}
=
2^{2(j_1-j_2)+1}\,2^{2j_2+\kappa}
=
2^{2j_1+\kappa+1}
=
2L.
\]

In the first central direction, the union over $m_1\in\{0,-1\}$ concatenates two adjacent half-open intervals, each of length $L$, so the resulting first central side length is also exactly
\[
2L.
\]
Consequently, for each fixed spatial fiber, $\widehat{\mathcal P}_o^{\mathbf j}$ has a central fiber which is a half-open rectangle of exact side lengths
\[
2L
\qquad\text{and}\qquad
2L.
\]

\smallskip
\noindent
\textit{Step 2: the enlarged block $\check{\mathcal P}_o^{\mathbf j}$.}
By definition,
\[
\check{\mathcal P}_o^{\mathbf j}
=
(-2L,0)+\widehat{\mathcal P}_o^{\mathbf j}
\ \cup\
\widehat{\mathcal P}_o^{\mathbf j}
\ \cup\
(2L,0)+\widehat{\mathcal P}_o^{\mathbf j}.
\]
Thus, for each fixed value of $t_2$, the corresponding $t_1$-fiber is the union of three adjacent half-open intervals, each of length $2L$, and hence has exact total length
\[
6L.
\]

Now pass to the rectified coordinates
\[
(u,v)=(t_1-t_2,t_2).
\]
For fixed $v=t_2$, one has $u=t_1-v$, so translation in the $t_1$-direction by $\pm 2L$ becomes translation in the $u$-direction by the same amount. Therefore, for each fixed $v$, the $u$-fiber of $\check{\mathcal P}_o^{\mathbf j}$ is a half-open interval of exact length $6L$.

\smallskip
\noindent
\textit{Step 3: the diagonal stacking.}
In the final union \eqref{eq:raw_shard_case3}, the index set
\[
\Gamma_3=\{-N,-N+2,\dots,-2,0,2,\dots,N-2\}
\]
contains exactly $N$ consecutive even integers. Under the rectifying map
\[
(u,v)=(t_1-t_2,t_2),
\]
the diagonal translation
\[
(t_1,t_2)\mapsto (t_1+mL,t_2+mL)
\]
becomes
\[
u\mapsto u,
\qquad
v\mapsto v+mL.
\]
Since consecutive values of $m$ differ by $2$, the corresponding $v$-step is exactly $2L$. Hence the union over $m\in\Gamma_3$ stacks exactly $N$ adjacent copies in the $v$-direction, producing a half-open interval of exact length
\[
N\cdot (2L)=2NL.
\]

Combining Steps 2 and 3, we obtain that in the $(u,v)$-coordinates each central fiber of
$\mathcal P_o^{\mathbf j,(3)}$ is a half-open strip-type cell of exact period sizes
\[
6L
\qquad\text{and}\qquad
2NL.
\]
Translating by the lattice
\[
\widetilde\Lambda_{\mathrm{cen}}^{(3)}(\mathbf j)
=
(6L)\mathbb Z\times (2NL)\mathbb Z
\]
therefore partitions the $(u,v)$-plane by Lemma~\ref{lem:interval-partition}. Denote one such period cell by $\Omega_{L,N}$.

Since the constants $6$ and $2$ are fixed, $\Omega_{L,N}$ is uniformly comparable to a rectangle of side lengths $L$ and $NL$. Finally, the change of variables $(t_1,t_2)\leftrightarrow (u,v)$ is linear and invertible, so it preserves the partition property up to boundary sets of measure zero.
\end{proof}

\begin{lem}\label{lem:raw_three_regimes}
The three raw projected basic shards match the three quotient tube regimes as follows.

\begin{itemize}
\item[(i)] If $j_3<j_2$, then $\mathcal P_o^{\mathbf j,(1)}$ has product-type central geometry uniformly comparable to central scales
\[
2^{2j_1+\kappa}
\qquad\text{and}\qquad
2^{2j_2+\kappa}.
\]

\item[(ii)] If $j_2<j_3<j_1$, then $\mathcal P_o^{\mathbf j,(2)}$ has product-type central geometry uniformly comparable to central scales
\[
2^{2j_1+\kappa}
\qquad\text{and}\qquad
2^{2j_3+\kappa}.
\]

\item[(iii)] If $j_1<j_3$, then $\mathcal P_o^{\mathbf j,(3)}$ has slanted central geometry; in the rectified coordinates $(u,v)=(t_1-t_2,t_2)$, it is uniformly comparable to central scales
\[
2^{2j_1+\kappa}
\qquad\text{and}\qquad
2^{2j_3+\kappa}.
\]
\end{itemize}
\end{lem}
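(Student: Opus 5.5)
The plan is to read off each part of Lemma~\ref{lem:raw_three_regimes} from the three central partition lemmas, Lemmas~\ref{lem:center-partition-caseI}, \ref{lem:center-partition-caseII} and \ref{lem:center-partition-caseIII}. Each of these already gives the exact central fiber dimensions of the corresponding basic raw shard. In every case I will only rewrite those side lengths as powers of two and absorb the fixed numerical factors ($2$ and $6$) into the comparability constants. Since the spatial part of every shard is the fixed box $\Box_{\mathbf j}^+$ by construction (Lemma~\ref{lem:spatial-partition}), only the central geometry needs attention.

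For (i), Lemma~\ref{lem:center-partition-caseI} says that, for each fixed $\mathbf z$, the central fiber of $\mathcal P_o^{\mathbf j,(1)}$ is a half-open coordinate rectangle with sides $2L_1$ and $2L_2$, where $L_\mu=2^{2j_\mu+\kappa}$. This is product-type, and up to the factor $2$ it is comparable to $2^{2j_1+\kappa}\times 2^{2j_2+\kappa}$.

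For (ii), Lemma~\ref{lem:center-partition-caseII} gives a coordinate rectangle with sides $2L_1$ and $2NL_2$, where $N=2^{2(j_3-j_2)}$. The second side then simplifies:
\[
NL_2=2^{2(j_3-j_2)}\,2^{2j_2+\kappa}=2^{2j_3+\kappa}.
\]
So the fiber is comparable to $2^{2j_1+\kappa}\times 2^{2j_3+\kappa}$. This is exactly the role of $\Gamma_1$: it stacks $2N$ copies so that the second central scale climbs from $j_2$ up to $j_3$.

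For (iii), I pass to $(u,v)=(t_1-t_2,t_2)$. By Lemma~\ref{lem:center-partition-caseIII}, the central fiber is the cell $\Omega_{L,N}$, with $L=2^{2j_1+\kappa}$ and $N=2^{2(j_3-j_1)}$. This cell has $u$-extent $6L$ and $v$-extent $2NL$, where
\[
NL=2^{2j_3+\kappa}.
\]
It is therefore comparable to $2^{2j_1+\kappa}\times 2^{2j_3+\kappa}$ in the rectified coordinates. Since $\Theta$ is a unimodular shear, the set is a slanted parallelogram-type region in the original $(t_1,t_2)$ coordinates, which is the slanted regime.

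The only step needing care, and the one I expect to be the main obstacle, is the meaning of ``uniformly comparable'' in (iii). The cell $\Omega_{L,N}$ is not literally a rectangle in $(u,v)$. The reason is that $\widehat{\mathcal P}_o^{\mathbf j}$ is a $2L\times 2L$ square in $(t_1,t_2)$, so it becomes a parallelogram under $\Theta$.

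I would handle this by exhibiting explicit inner and outer rectangles. For each fixed $v$, the $u$-fiber is an interval of length exactly $6L$, whose left endpoint depends on $v$. After the diagonal stacking, this endpoint varies by at most $2L$ within each block of height $2L$, with the pattern repeating periodically. Hence $\Omega_{L,N}$ contains a rectangle of size $2L\times 2NL$ and is contained in one of size $10L\times 2NL$. In addition, the offsets $2^{2j_\mu}f_{o,\mu}$ are at most $2^{2j_\mu+\kappa-10}$ by the choice of $\kappa$, which is negligible relative to $L$. So all the constants depend only on $\kappa$ and the dimensions, as the lemma requires.
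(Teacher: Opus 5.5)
Your proposal is correct and follows the paper's route. The paper's proof simply reads (i)--(iii) off Lemmas~\ref{lem:center-partition-caseI}, \ref{lem:center-partition-caseII} and \ref{lem:center-partition-caseIII}, exactly as you do via $NL_2=2^{2j_3+\kappa}$ and $NL=2^{2j_3+\kappa}$. Your explicit inner and outer rectangles for the sawtooth cell $\Omega_{L,N}$ (in fact $4L\times 2NL$ and $8L\times 2NL$ already work), together with the bound $\|f_{o,\mu}\|_\infty\le 2^{\kappa-10}$ to control the $\mathbf z$-dependence, only make explicit what the paper asserts without detail at the end of Lemma~\ref{lem:center-partition-caseIII}.
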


\begin{proof}
This follows immediately from Lemmas~\ref{lem:center-partition-caseI}, \ref{lem:center-partition-caseII}, and \ref{lem:center-partition-caseIII}.
\end{proof}

\begin{prop}[Geometry of the raw projected shards]
\label{prop:raw_shards}~\\
For each fixed $\mathbf j\in\mathbb Z^3$ and each admissible case
$k\in\{1,2,3\}$ above, the family $\mathfrak P_{\mathbf j}^{(k)}$ is a
partition of $\mathscr N$.  Moreover, there exists an integer
$\sigma\ge 1$, depending only on the dimensions, such that every raw projected
shard $P\in \mathfrak P_{\mathbf j}^{(k)}$ satisfies
\begin{align}
T(\mathbf g\zeta_{\mathbf j},2^{\mathbf j-\sigma})
\subset P \subset
T(\mathbf g,2^{\mathbf j+\sigma}),
\label{eq:raw_shard_tube_comparison}
\end{align}
where $\mathbf g$ is the lattice center of $P$ and
\[
\zeta_{\mathbf j}
:=
(\zeta_{\mathbf j,1},\zeta_{\mathbf j,2},\zeta_{\mathbf j,3},0,0),
\qquad
\zeta_{\mathbf j,\mu}
:=
(\underbrace{2^{j_\mu-1},\ldots,2^{j_\mu-1}}_{2n_\mu\ \text{entries}}).
\]
\end{prop}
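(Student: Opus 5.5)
The argument has two parts: the partition property and the two-sided tube comparison \eqref{eq:raw_shard_tube_comparison}. Both reduce to fiberwise statements over the spatial variable, because every raw shard is a set of the form $\{(\mathbf z,t_1,t_2):\mathbf z\in Q,\ (t_1,t_2)\in F_{\mathbf z}\}$. Here $Q$ is a translate of $\Box_{\mathbf j}^+$ and $F_{\mathbf z}$ is the central fiber.

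\emph{Partition.} First, Lemma~\ref{lem:spatial-partition} shows that the spatial lattice $\Lambda_{\mathrm{sp}}(\mathbf j)$ tiles $\mathbb C^{n_1+n_2+n_3}$. Second, for each fixed $\mathbf z$ in a spatial cell, Lemmas~\ref{lem:center-partition-caseI}, \ref{lem:center-partition-caseII} and \ref{lem:center-partition-caseIII} show that the central translates of the basic shard tile $\mathbb R^2$; in Case III this is done in the rectified coordinates and transported back by the linear bijection $\Theta$.

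One point needs care: the translations are group translations in $\mathscr N$, not Euclidean ones. I would first check that translating by a purely central element $(\mathbf 0,c_1,c_2)$ is Euclidean in $(t_1,t_2)$, since the centre is central. Translating by a spatial lattice element shifts $\mathbf z$ and adds a $\mathbf z$-dependent bilinear term to the centre. That term is constant in $(t_1,t_2)$ for each fixed $\mathbf z$, so it only shifts each fiber rigidly. A rigidly shifted fiber family still tiles $\mathbb R^2$, because the central lattice translates of any tiling cell tile. Combining the two steps gives disjointness and exhaustion of $\mathfrak P_{\mathbf j}^{(k)}$ up to null sets.

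\emph{Tube comparison.} I would compare each shard with the model tube of the matching regime using Lemma~\ref{lem:raw_three_regimes}. The comparison has three ingredients.
\begin{enumerate}
\item Spatially, $Q$ is a product of cubes of side $2^{j_\mu}$ with centre $\mathbf g\zeta_{\mathbf j}$. It therefore contains the Euclidean ball of radius $2^{j_\mu-1}$ about its centre in each factor, and is contained in the ball of radius $\sqrt{2n_\mu}\,2^{j_\mu}$ about the corner $\mathbf g$. These account for the spatial parts of $T(\mathbf g\zeta_{\mathbf j},2^{\mathbf j-\sigma})$ and $T(\mathbf g,2^{\mathbf j+\sigma})$.
\item Centrally, the fiber $F_{\mathbf z}$ is a rectangle (Cases I and II) or a strip cell (Case III). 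Its side lengths are $2L_1, 2L_2$, or $2L_1, 2NL_2$, or $6L, 2NL$ in $(u,v)$. The fiber's position is shifted by at most $2^{2j_\mu}\|f_{o,\mu}\|_\infty\le 2^{2j_\mu+\kappa-10}$, which is a small fraction of its own side length $2^{2j_\mu+\kappa}$ by the choice of $\kappa$.
\item The same smallness holds for the bilinear twist over $Q$, which has size about $2^{2j_\mu}$.
\end{enumerate}
Consequently, every fiber contains a common central rectangle of sides $c\,2^{2j_1+\kappa}$ and $c\,2^{2j_2+\kappa}$ (or $c\,2^{2j_3+\kappa}$, in $(u,v)$ for Case III) and is contained in one enlarged by $C$. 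I would then choose $\sigma$ so that $2^{\pm2\sigma}$ absorbs $c$, $C$, $2^{\kappa}$ and $\sqrt{2n_\mu}$. Since the constants may depend on $\kappa$, as the preceding Remark allows, this uniformity suffices.

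\emph{Main obstacle.} The hardest step is the inner inclusion in Case III. There the fiber is a union of $3N$ blocks, each with an irregular lower profile coming from $f_{o,\mu}$, and one must check that these blocks really fill a rectangle in $(u,v)$ of sides comparable to $L$ and $NL$, with no gaps. The Case III lemma gives exact tiling in $(u,v)$, and containing a fixed proportion of a rectangle follows from the exact lengths $6L$ and $2NL$ minus the profile offsets of order $2^{-10}L$. I would verify this fiber by fiber, rather than appealing to tile geometry.
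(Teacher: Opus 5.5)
Your proposal is correct and follows essentially the same route as the paper. The partition comes from Lemma~\ref{lem:spatial-partition} combined with the fiberwise central partition Lemmas~\ref{lem:center-partition-caseI}--\ref{lem:center-partition-caseIII}. The tube comparison comes from matching the spatial sides $2^{j_\mu}$ and the central sizes of Lemma~\ref{lem:raw_three_regimes} with each regime's model tube, with all constants (including $2^\kappa$) absorbed into $\sigma$. Your handling of group-translation twists and profile offsets only makes explicit what the paper leaves implicit.

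One detail needs correcting when you do the Case III check fiber by fiber. After rectification each $\check{\mathcal P}_o^{\mathbf j}$-block becomes a parallelogram, so the $u$-interval of length $6L$ drifts by up to $2L$ as $v$ crosses a block, not merely by the $O(2^{-10}L)$ profile offsets. Likewise, the $\mathbf z_3$-part of the twist is small only because it cancels in $u=t_1-t_2$. Since $6L-2L=4L$, a common inner rectangle of sides comparable to $L$ and $NL$ still survives.
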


\begin{proof}
Fix an admissible regime $k\in\{1,2,3\}$ and a scale $\mathbf j$.

We first prove the partition statement. By Lemma~\ref{lem:spatial-partition}, the spatial translates of
\[
\Box_{\mathbf j}^+
=
[0,2^{j_1})^{2n_1}\times [0,2^{j_2})^{2n_2}\times [0,2^{j_3})^{2n_3}
\]
by the lattice $\Lambda_{\mathrm{sp}}(\mathbf j)$ partition the spatial variables.

Now fix one spatial translate of $\Box_{\mathbf j}^+$ and fix a spatial point $\mathbf z$ in that translate. For this fixed $\mathbf z$, the central fiber of the basic raw pre-shard is a half-open rectangle, depending on $\mathbf z$ through the profile functions
$f_{o,1}(2^{-j_1}\mathbf z_1)$ and $f_{o,2}(2^{-j_2}\mathbf z_2)$. In each of the three admissible regimes, Lemmas~\ref{lem:center-partition-caseI}, \ref{lem:center-partition-caseII}, and \ref{lem:center-partition-caseIII} show that the corresponding central translates of this fiber partition the central variables (in Case III, after rectification by $(u,v)=(t_1-t_2,t_2)$, and hence also in the original coordinates). Therefore, for each fixed spatial point $\mathbf z$, the central fibers of the translates in $\mathfrak P_{\mathbf j}^{(k)}$ partition $\mathbb R^2$ up to boundary overlap only.  Combining the spatial partition with this fiberwise central partition, we conclude that the family $\mathfrak P_{\mathbf j}^{(k)}$ partitions $\mathscr N$ up to boundary sets of measure zero.

We now turn to the tube comparison \eqref{eq:raw_shard_tube_comparison}. In all three regimes, the spatial component of a raw projected shard has side lengths exactly
\[
2^{j_1},\, 2^{j_2},\,\text{and}\,\,2^{j_3}
\]
in the three spatial factors. The corresponding central sizes are given by Lemmas~\ref{lem:center-partition-caseI}, \ref{lem:center-partition-caseII}, and \ref{lem:center-partition-caseIII}: in Case I they are of order
\[
2^{2j_1+\kappa}
\qquad\text{and}\qquad
2^{2j_2+\kappa},
\]
in Case II of order
\[
2^{2j_1+\kappa}
\qquad\text{and}\qquad
2^{2j_3+\kappa},
\]
and in Case III, after rectification, of order
\[
2^{2j_1+\kappa}
\qquad\text{and}\qquad
2^{2j_3+\kappa}.
\]
These are exactly the three model quotient tube geometries at scale $\mathbf j$, up to constants depending only on the dimensions and the fixed stacking parameter $\kappa$. Hence each raw projected shard contains a quotient tube of slightly smaller scale centered at its lattice midpoint, and is contained in a quotient tube of slightly larger scale centered at its lattice base point. This yields \eqref{eq:raw_shard_tube_comparison} for some uniform integer $\sigma\ge1$.
\end{proof}

\begin{remark}
The raw projected shards are the correct geometric descendants of the Heisenberg
tilings, but they are not the right objects for an exact tensor-product Haar
theory.  First, they are only a partition at each fixed scale; they are not
nested across scales.  Second, in Case III the diagonal union
\eqref{eq:raw_shard_case3} is comparable to a slanted parallelogram, but its
rectified shear image is not a genuine Cartesian block.  For the discrete Haar
construction we therefore replace the raw projected shards by exact inverse
images of product blocks under measure-preserving rectifications.
\end{remark}

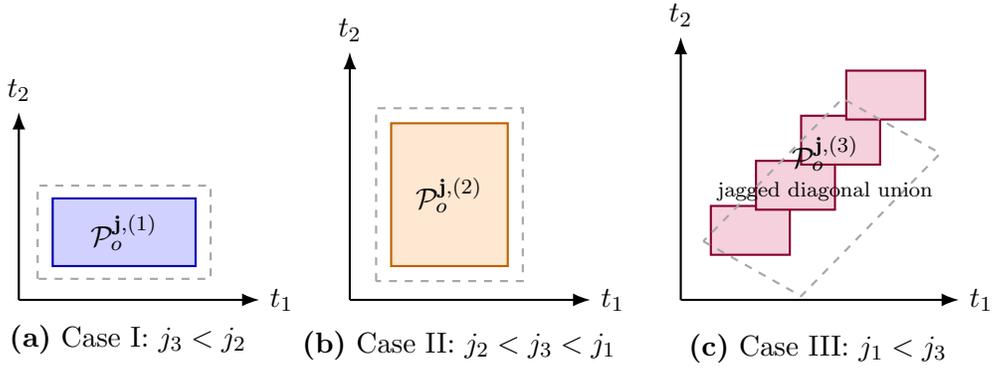
\begin{figure}[htbp]
\centering
\begin{tikzpicture}[
    x=1cm,y=1cm,
    axis/.style={-Latex,thick},
    tube/.style={dashed,gray!70,thick},
    rawA/.style={fill=blue!18,draw=blue!70!black,thick},
    rawB/.style={fill=orange!18,draw=orange!75!black,thick},
    rawC/.style={fill=purple!18,draw=purple!70!black,thick}
]

\begin{scope}[shift={(0,0)}]
    \draw[axis] (0,0) -- (3.2,0) node[right] {$t_1$};
    \draw[axis] (0,0) -- (0,2.5) node[above] {$t_2$};

    \fill[rawA] (0.45,0.45) rectangle (2.35,1.35);
    \draw[tube] (0.25,0.28) rectangle (2.55,1.52);

    \node at (1.40,0.90) {$\mathcal P_o^{\mathbf j,(1)}$};
    \node[align=center] at (1.45,-0.70)
    {\textbf{(a)} Case I: $j_3<j_2$\\ 
    };
\end{scope}

\begin{scope}[shift={(4.4,0)}]
    \draw[axis] (0,0) -- (3.2,0) node[right] {$t_1$};
    \draw[axis] (0,0) -- (0,3.3) node[above] {$t_2$};

    \fill[rawB] (0.55,0.45) rectangle (2.10,2.35);
    \draw[tube] (0.35,0.25) rectangle (2.30,2.55);

    \node at (1.32,1.40) {$\mathcal P_o^{\mathbf j,(2)}$};
    \node[align=center] at (1.45,-0.78)
    {\textbf{(b)} Case II: $j_2<j_3<j_1$\\ 
    };
\end{scope}

\begin{scope}[shift={(8.8,0)}]
    \draw[axis] (0,0) -- (3.7,0) node[right] {$t_1$};
    \draw[axis] (0,0) -- (0,3.5) node[above] {$t_2$};

    \foreach \a in {0.15,0.75,1.35,1.95} {
        \fill[rawC] (\a+0.25,\a+0.45) rectangle (\a+1.30,\a+1.10);
    }

    \draw[tube] (0.30,0.78) -- (1.60,0.05) -- (3.45,1.95) -- (2.15,2.68) -- cycle;

    \node[align=center] at (1.92,1.75) {$\mathcal P_o^{\mathbf j,(3)}$\\[-2pt]\scriptsize jagged diagonal union};
    \node[align=center] at (1.82,-0.82)
    {\textbf{(c)} Case III: $j_1<j_3$\\ 
    };
\end{scope}

\end{tikzpicture}
\caption{Central $(t_1,t_2)$-slices of the raw projected basic shards. The spatial base $\Box_{\mathbf j}^{+}$ is suppressed. In the first two regimes the raw projected shard is product-like, while in the third regime it is a diagonal union comparable to a slanted quotient tube. The dashed outlines indicate the corresponding tube geometry only schematically.}
\label{fig:raw_projected_shards}
\end{figure}

\subsection{Analytic dyadic shards}

We now build the dyadic objects that will be used in the martingale and Haar
analysis.  They are modeled on three exact product geometries.

\medskip
\noindent
\textbf{Parabolic dyadic blocks.}
For $\mu\in\{1,2,3\}$, let $\mathscr D_{j_\mu}^{(\mu)}$ denote the standard
dyadic cubes in $\mathbb C^{n_\mu}\cong \mathbb R^{2n_\mu}$ of side length
$2^{j_\mu}$, and let $\mathscr I_{j_\mu}^{(\mu)}$ denote the standard dyadic
intervals in $\mathbb R$ of length $2^{2j_\mu+\kappa}$.  A
\emph{parabolic dyadic block of scale $j_\mu$} in
$\mathbb C^{n_\mu}\times \mathbb R$ is a set of the form
\[
Q_\mu = I_\mu\times J_\mu,
\qquad
I_\mu\in \mathscr D_{j_\mu}^{(\mu)},
\quad
J_\mu\in \mathscr I_{j_\mu}^{(\mu)}.
\]
We denote the family of all such blocks by $\mathscr Q_{j_\mu}^{(\mu)}$.

\medskip
\noindent
\textbf{Three rectified product systems.}
For a scale $\mathbf j=(j_1,j_2,j_3)\in \mathbb Z^3$, define
\begin{align}
\mathfrak Q_{\mathbf j}^{(1)}
&:=
\Big\{
Q_1\times Q_2\times I_3:
Q_1\in \mathscr Q_{j_1}^{(1)},\,
Q_2\in \mathscr Q_{j_2}^{(2)},\,
I_3\in \mathscr D_{j_3}^{(3)}
\Big\},
\label{eq:rectified_family1}
\\
\mathfrak Q_{\mathbf j}^{(2)}
&:=
\Big\{
Q_1\times I_2\times Q_3:
Q_1\in \mathscr Q_{j_1}^{(1)},\,
I_2\in \mathscr D_{j_2}^{(2)},\,
Q_3\in \mathscr Q_{j_3}^{(3)}
\Big\}.
\label{eq:rectified_family2}
\end{align}
We also define the central shear
\begin{align}
\Theta(\mathbf z,t_1,t_2)
:=
(\mathbf z,t_1-t_2,t_2).
\label{eq:central_shear}
\end{align}
It is a linear measure-preserving bijection of $\mathscr N$.

The first two analytic dyadic systems are simply the product families
\[
\mathfrak R_{\mathbf j}^{(1)}:=\mathfrak Q_{\mathbf j}^{(1)},
\qquad
\mathfrak R_{\mathbf j}^{(2)}:=\mathfrak Q_{\mathbf j}^{(2)},
\]
while the third one is defined by
\begin{align}
\mathfrak R_{\mathbf j}^{(3)}
:=
\Theta^{-1}\big(\mathfrak Q_{\mathbf j}^{(2)}\big)
=
\big\{
\Theta^{-1}(Q): Q\in \mathfrak Q_{\mathbf j}^{(2)}
\big\}.
\label{eq:analytic_family3}
\end{align}

Thus the analytic dyadic shards are:
\begin{itemize}
    \item[(1)] a pure product family of type
    $(\mathbb C^{n_1}\times\mathbb R)\times
      (\mathbb C^{n_2}\times\mathbb R)\times
      \mathbb C^{n_3}$;
    \item[(2)] a mixed product family of type
    $(\mathbb C^{n_1}\times\mathbb R)\times
      \mathbb C^{n_2}\times
      (\mathbb C^{n_3}\times\mathbb R)$;
    \item[(3)] the inverse shear of the mixed product family in (2).
\end{itemize}

\begin{defn}[Admissible quotient dyadic structure]
\label{def:admissible_quotient_dyadic}~\\
The \emph{admissible quotient dyadic structure} on $\mathscr N$ is the union
\[
\mathfrak R
:=
\mathfrak R^{(1)}\cup \mathfrak R^{(2)}\cup \mathfrak R^{(3)},
\qquad
\mathfrak R^{(k)}
:=
\bigcup_{\mathbf j\in\mathbb Z^3}\mathfrak R_{\mathbf j}^{(k)}.
\]
We call the sets in $\mathfrak R_{\mathbf j}^{(k)}$
\emph{analytic dyadic shards of type $k$ and scale $\mathbf j$}.
\end{defn}

\begin{lem}\label{lem:analytic-nested}
For each fixed $k\in\{1,2,3\}$, the family $\mathfrak R^{(k)}$ is nested across scales in the following sense: if $\mathbf j,\mathbf \ell\in\mathbb Z^3$ satisfy
\[
j_\mu\le \ell_\mu,\qquad \mu=1,2,3,
\]
then for every $R\in \mathfrak R_{\mathbf \ell}^{(k)}$ there exists a unique $R'\in \mathfrak R_{\mathbf j}^{(k)}$ such that
\[
R\subseteq R'.
\]
\end{lem}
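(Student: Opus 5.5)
The plan is to reduce the statement to the one-dimensional nesting of standard dyadic cubes and intervals, applied factor by factor. For $k=3$ we then transport the result through the measure-preserving bijection $\Theta$ of \eqref{eq:central_shear}.

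\emph{Step 1 (one factor).} I will first isolate the elementary fact that carries the lemma. Among two standard half-open dyadic grids on $\mathbb R^d$, the finer one refines the coarser one: every cube of the finer grid lies in exactly one cube of the coarser grid. Existence holds because the coarser grid is a partition (Lemma~\ref{lem:interval-partition}) and a finer cube cannot straddle a coarser boundary, since coarser endpoints are multiples of finer lengths. Uniqueness holds because distinct cubes of one grid are disjoint.

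The key consistency check is that the central grids $\mathscr I^{(\mu)}_{j_\mu}$, of length $2^{2j_\mu+\kappa}$, move in lockstep with the spatial grids $\mathscr D^{(\mu)}_{j_\mu}$, of side $2^{j_\mu}$. Both are standard dyadic, and both are monotone in $j_\mu$ with the same orientation. Hence $j_\mu\le \ell_\mu$ makes one of the two generations a refinement of the other simultaneously in the spatial and the central variable. The lemma asserts that $R\in\mathfrak R^{(k)}_{\boldsymbol\ell}$ sits inside an element of $\mathfrak R^{(k)}_{\mathbf j}$. For this I must use the orientation in which the generation indexed by $\boldsymbol\ell$ is the refinement. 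I will therefore fix, at the start, the scale labelling under which the blocks of index $\boldsymbol\ell$ are the finer ones.

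\emph{This orientation check is the main obstacle.} With the literal side length $2^{j_\mu}$, the family with the larger index has the larger cubes. In that reading the containment $R\subseteq R'$ can only hold after the scale labels are read in the reverse sense, for example by regarding $\mathbf j$ as $-\mathbf j$ in the side lengths, as is customary for martingale filtrations. I will state explicitly that the argument uses exactly this refinement direction, namely that $\boldsymbol\ell$ labels the refined generation. Everything else is then formal.

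\emph{Step 2 ($k=1,2$).} Write $R=Q_1\times Q_2\times I_3$ with $Q_\mu=I_\mu\times J_\mu$. Step~1 gives unique $I_\mu'\supseteq I_\mu$ and $J_\mu'\supseteq J_\mu$ at scale $j_\mu$, and I set $R'=(I_1'\times J_1')\times(I_2'\times J_2')\times I_3'$. Uniqueness follows because two distinct elements of $\mathfrak R^{(1)}_{\mathbf j}$ are disjoint: they are products of members of partitions and differ in at least one factor, so they cannot both contain the nonempty set $R$. The case $k=2$ is identical with the factor pattern of \eqref{eq:rectified_family2}.

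\emph{Step 3 ($k=3$).} Write $R=\Theta^{-1}(Q)$ with $Q\in\mathfrak Q^{(2)}_{\boldsymbol\ell}$. Step~2 yields a unique $Q'\in\mathfrak Q^{(2)}_{\mathbf j}$ with $Q\subseteq Q'$, and I take $R'=\Theta^{-1}(Q')$. Since $\Theta$ is a bijection, inclusion is preserved and reflected. Moreover $\Theta^{-1}$ maps the partition $\mathfrak Q^{(2)}_{\mathbf j}$ onto the partition $\mathfrak R^{(3)}_{\mathbf j}$, so uniqueness transfers directly.
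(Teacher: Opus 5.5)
Your argument is correct and is essentially the paper's proof: factorwise nesting of the standard dyadic cubes and intervals for $k=1,2$, then transport of the $\mathfrak Q^{(2)}$ nesting through the inclusion-preserving bijection $\Theta^{-1}$ for $k=3$. Your orientation concern is justified, and the paper's proof passes over it: $\mathscr D^{(\mu)}_{j_\mu}$ has side $2^{j_\mu}$ and $\Theta$ fixes $\mathbf z$, so the $\mathbf z_\mu$-projection of any $R\in\mathfrak R^{(k)}_{\boldsymbol\ell}$ is a cube of side $2^{\ell_\mu}$; hence the literal statement fails as soon as $\boldsymbol\ell\neq\mathbf j$, and what both you and the paper actually prove is the version with the roles of $\mathbf j$ and $\boldsymbol\ell$ interchanged (equivalently, with scales read as $2^{-j_\mu}$, as in the Euclidean filtrations where $\mathscr D_k$ has side $2^{-k}$).
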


\begin{proof}
For $k=1,2$, this is exactly the standard nesting property of the underlying product dyadic cubes and dyadic intervals in the rectified coordinates.

For $k=3$, the family $\mathfrak R^{(3)}$ is defined by
\[
\mathfrak R^{(3)}=\Theta^{-1}(\mathfrak Q^{(2)}),
\]
where $\mathfrak Q^{(2)}$ is a standard product dyadic family and hence nested across scales. Since the bijection $\Theta^{-1}$ preserves set inclusion, the same nesting property holds for $\mathfrak R^{(3)}$.
\end{proof}

\begin{prop}[Basic properties of the analytic dyadic shards]
\label{prop:analytic_shards}~\\
The admissible quotient dyadic structure satisfies the following properties.

\begin{itemize}
    \item[(i)] For each $k\in\{1,2,3\}$ and each
    $\mathbf j\in\mathbb Z^3$, the family
    $\mathfrak R_{\mathbf j}^{(k)}$ is a partition of $\mathscr N$.

    \item[(ii)] For each fixed $k$, the collection
    $\mathfrak R^{(k)}$ is nested across scales.

    \item[(iii)] There exists an integer $\sigma\ge 1$ such that the following
    tube comparisons hold uniformly:
    \begin{align}
    T(\mathbf g\zeta_{\mathbf j},2^{\mathbf j-\sigma})
    \subset R \subset
    T(\mathbf g,2^{\mathbf j+\sigma}),
    \label{eq:analytic_tube_comparison}
    \end{align}
    whenever
    \begin{itemize}
        \item[$\bullet$] $R\in \mathfrak R_{\mathbf j}^{(1)}$ and $j_3<j_2$;
        \item[$\bullet$] $R\in \mathfrak R_{\mathbf j}^{(2)}$ and $j_2<j_3<j_1$;
        \item[$\bullet$] $R\in \mathfrak R_{\mathbf j}^{(3)}$ and $j_1<j_3$.
    \end{itemize}

    \item[(iv)] In each of the above three regimes, every analytic dyadic shard
    is uniformly comparable to a raw projected shard of the same scale.
\end{itemize}
\end{prop}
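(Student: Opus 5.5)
Items (i) and (ii) are structural and essentially free; the real content is the tube comparison (iii), from which (iv) follows.

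\textbf{Item (i).} For $k=1,2$, the family $\mathfrak R_{\mathbf j}^{(k)}$ is a Cartesian product of half-open dyadic partitions:
\begin{itemize}
\item spatial cubes of side $2^{j_\mu}$ in each factor (Lemma~\ref{lem:spatial-partition});
\item dyadic intervals of length $2^{2j_\mu+\kappa}$ in the two central variables (Lemma~\ref{lem:interval-partition}).
\end{itemize}
In the type-1 case the central variables carry lengths $2^{2j_1+\kappa}$ and $2^{2j_2+\kappa}$; in the type-2 case, $2^{2j_1+\kappa}$ and $2^{2j_3+\kappa}$. So these families partition $\mathscr N$ up to boundaries. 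For $k=3$, I will use that $\Theta$ is a bijection. The sets $\Theta^{-1}(Q)$ are then pairwise disjoint and cover $\mathscr N$ exactly when the $Q\in\mathfrak Q^{(2)}_{\mathbf j}$ do.

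\textbf{Item (ii).} This is exactly Lemma~\ref{lem:analytic-nested}. I would add one remark for $k=3$: since $\Theta$ is measure preserving, null boundary sets stay null.

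\textbf{Item (iii).} I work regime by regime, fixing $R$ with lattice base point $\mathbf g$.
\begin{itemize}
\item \emph{Regime $j_3<j_2$, type 1.} $R$ is a product of spatial cubes of sides $2^{j_1},2^{j_2},2^{j_3}$ and a central rectangle of sides $2^{2j_1+\kappa}\times 2^{2j_2+\kappa}$. This is exactly the product model geometry of regime (i) in the Remark describing $T(\mathbf g,2^{\mathbf j})$.
\item \emph{Regime $j_2<j_3<j_1$, type 2.} The central sides are $2^{2j_1+\kappa}$ and $2^{2j_3+\kappa}$, which is model (ii).
\item \emph{Regime $j_1<j_3$, type 3.} $\Theta(R)$ has central sides $2^{2j_1+\kappa},2^{2j_3+\kappa}$ in the coordinates $(u,v)=(t_1-t_2,t_2)$, which is model (iii).
\end{itemize}

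In each case the argument has two steps. First, by the Remark, $T$ is only defined up to uniform comparability, so a model box of comparable dimensions is admissible. Second, I check the inclusions directly:
\begin{itemize}
\item The outer box at scale $\mathbf j+\sigma$ has spatial sides $2^{j_\mu+\sigma}$ and central sides $2^{2j+2\sigma}$. Once $2^{2\sigma}\gtrsim 2^{\kappa}$, it contains $R$ when based at $\mathbf g$.
\item The box at scale $\mathbf j-\sigma$ centered at the midpoint $\mathbf g\zeta_{\mathbf j}$ has spatial sides $2^{j_\mu-\sigma}\le 2^{j_\mu-1}$. Its central sides are much smaller than those of $R$, so it sits inside $R$.
\end{itemize}

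\textbf{The main obstacle.} The spatial displacement $\zeta_{\mathbf j}$ acts through the group law and shifts the central variables by a bilinear symplectic term. This term is of size $\lesssim 2^{j_\mu}\cdot 2^{j_\mu}=2^{2j_\mu}$, and it must be absorbed by the central margins. I plan to handle this with the stacking parameter $\kappa$: the central margins are of order $2^{2j_\mu+\kappa}$, which dominates $2^{2j_\mu}$ when $\kappa\ge10$. Choosing $\sigma\approx\kappa$ then makes both inclusions hold.

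In the slanted regime, I must also check that this absorption survives the shear $\Theta$. Only $t_1-t_2$ gets mixed, and the perturbations in $t_1$ and $t_2$ are of size $2^{2j_1}$ and $2^{2j_3}$ respectively. These are dominated by $2^{2j_1+\kappa}$ and $2^{2j_3+\kappa}$, using $j_1<j_3$ so that the $v$-scale is the larger one. This is where the regime condition is essential.

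\textbf{Item (iv).} I combine (iii) with Proposition~\ref{prop:raw_shards}. Both the analytic shard $R$ and the raw shard $P$ of the same scale sit between $T(\cdot,2^{\mathbf j-\sigma})$ and $T(\cdot,2^{\mathbf j+\sigma})$. Therefore each is contained in a uniform dilate of the other, with comparable measure. I will choose $P$ as the raw shard whose lattice base point lies nearest to that of $R$. Since both lattices have spacings comparable to the shard dimensions, the two base points differ by a bounded multiple of the tube size, and enlarging $\sigma$ absorbs this discrepancy.
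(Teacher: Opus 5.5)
\textbf{Overall.} Your arguments for (i), (ii), (iv) and the side-length bookkeeping in (iii) follow the paper's proof. The paper proves (iii) only by observing that each analytic shard has exactly the spatial and (for type 3, rectified) central side lengths of the model region of its regime. It reads $T(\mathbf g,2^{\mathbf j})$, as the Remark at the start of Section~\ref{sec:quotient_shards} allows, as a coordinate model region with those scales, and obtains (iv) by transitivity through the common tube.

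\textbf{The main obstacle step fails.} The cross term in $\mathbf g\zeta_{\mathbf j}$ is bilinear in the spatial parts of $\mathbf g$ and of $\zeta_{\mathbf j}$, and only the second factor has size $2^{j_\mu}$. If $\mathbf z_1^{\mathbf g}=2^{j_1}k$ with $k\in\mathbb Z^{2n_1}$, the $\mathbf z_1$-contribution to the $t_1$-shift can be as large as a constant times $|k|\,2^{2j_1}$. This is unbounded over $\Lambda_{\mathrm{sp}}(\mathbf j)$ and cannot be absorbed by a fixed margin $2^{2j_1+\kappa}$.

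Nor is this only a matter of where the center sits. Suppose $T(\mathbf g,r)$ is the left translate $\mathbf g\cdot T(\mathbf 0,r)$, and take type 1 for definiteness. For $\mathbf h\in R$, the $t_1$-coordinate of $\mathbf g^{-1}\mathbf h$ contains the symplectic pairing of $\mathbf z_1^{\mathbf g}$ with $\mathbf z_1^{\mathbf h}-\mathbf z_1^{\mathbf g}$. This varies by $\gtrsim |k|\,2^{2j_1}$ across the product block $R$, so $R\subset T(\mathbf g,2^{\mathbf j+\sigma})$ forces $|k|\lesssim 2^{2\sigma}$. The inner inclusion similarly forces $|k|\lesssim 2^{\kappa+\sigma}$. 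Euclidean product blocks far from the origin are therefore not uniformly comparable to left-translated tubes, and no choice $\sigma\approx\kappa$ works. The proposition is provable only in the coordinate model-region sense the paper uses, and there no group-law term arises at all. Your obstacle paragraph should be deleted, not repaired.

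\textbf{Smaller corrections within the coordinate reading.}
\begin{itemize}
\item \emph{Centering.} If $\mathbf g$ is the lower corner of $R$, then $\mathbf g\zeta_{\mathbf j}$ still has its central coordinates at the corner of the central rectangle, since $\zeta_{\mathbf j}$ has zero central part. A tube centered there is not contained in $R$, however small it is. Take $\mathbf g$ with central part at the midpoints of the central intervals; the raw shards are centered this way by the unions over $m_i\in\{0,-1\}$, but the analytic ones are not.
\item \emph{Slanted regime.} Your check is inconsistent even heuristically. A $t_2$-perturbation of size $2^{2j_3}$ moves $u=t_1-t_2$ by the same amount, which exceeds the $u$-side $2^{2j_1+\kappa}$ once $j_3-j_1>\kappa/2$. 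What actually makes the slanted model right is that the $\mathbf z_3$-contribution enters $t_1=u_1+u_3$ and $t_2=u_2+u_3$ identically and cancels in $u$. That leaves only $\mathbf z_1$- and $\mathbf z_2$-terms, of order at most $2^{2j_1}$ since $j_2\le j_1$.
\item \emph{Nesting direction.} Blocks at scale $\mathbf j$ have sides $2^{j_\mu}$ and $2^{2j_\mu+\kappa}$, so they grow with $\mathbf j$. Lemma~\ref{lem:analytic-nested} must therefore be applied with $\ell_\mu\le j_\mu$ (finer blocks lie in coarser ones), not in the literal form you cite.
\end{itemize}
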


\begin{proof}
The partition statements follow directly from the corresponding partition properties of the underlying dyadic cubes and intervals in the rectified coordinates, together with the definition of $\mathfrak R_{\mathbf j}^{(3)}$ as the pullback of $\mathfrak Q_{\mathbf j}^{(2)}$ under $\Theta^{-1}$. The nesting across scales is exactly Lemma~\ref{lem:analytic-nested}.

For the tube comparison, note first that a block
$R\in \mathfrak R_{\mathbf j}^{(1)}$ is a product set
\[
I_1\times J_1\times I_2\times J_2\times I_3
\]
with
\[
\ell(I_\mu)=2^{j_\mu},\qquad
\ell(J_1)=2^{2j_1+\kappa},\qquad
\ell(J_2)=2^{2j_2+\kappa}.
\]
Hence $R$ is comparable to
$\Box_{\mathbf j}^+\times ([-2^{2j_1+\kappa},2^{2j_1+\kappa})\times
[-2^{2j_2+\kappa},2^{2j_2+\kappa}))$, which in turn is comparable to the
quotient tube in the regime $j_3<j_2$.

Similarly, a block $R\in \mathfrak R_{\mathbf j}^{(2)}$ is comparable to
\[
\Box_{\mathbf j}^+\times
([-2^{2j_1+\kappa},2^{2j_1+\kappa})\times
 [-2^{2j_3+\kappa},2^{2j_3+\kappa})),
\]
which is the product model corresponding to the intermediate regime
$j_2<j_3<j_1$.

Finally, if $R\in \mathfrak R_{\mathbf j}^{(3)}$, then
$\Theta(R)\in \mathfrak Q_{\mathbf j}^{(2)}$ is such a product block, and hence
\[
R=\Theta^{-1}(\Theta(R))
\]
is comparable to
\[
\Box_{\mathbf j}^+\times P_{c2^{2j_1},\,C2^{2j_3}}
\]
for uniform constants $c,C>0$,
 where \(P_{a,b}\subseteq \mathbb R^2\) denotes a strip-type slanted central region whose image in the \((u,v)=(t_1-t_2,t_2)\)-coordinates is comparable to a rectangle of side lengths \(a\) and \(b\).  This is exactly the twisted central geometry of
the quotient tube in the regime $j_1<j_3$.

In each regime, the raw projected shard and the analytic dyadic shard are both
uniformly comparable to the same quotient tube $T(\mathbf g,2^{\mathbf j})$.
Hence they are uniformly comparable to one another as well.
\end{proof}

%

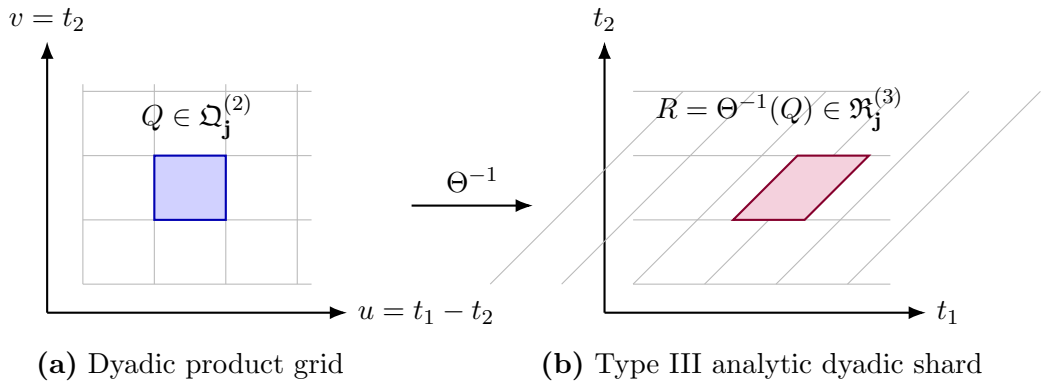
\begin{figure}[htbp]
\centering
\begin{tikzpicture}[
    x=0.95cm,y=0.95cm,
    axis/.style={-Latex,thick},
    gridline/.style={gray!55,thin},
    rectblock/.style={fill=blue!18,draw=blue!70!black,thick},
    shearblock/.style={fill=purple!18,draw=purple!70!black,thick}
]

\begin{scope}[shift={(0,0)}]
    \draw[axis] (0,0) -- (4.2,0) node[right] {$u=t_1-t_2$};
    \draw[axis] (0,0) -- (0,3.8) node[above] {$v=t_2$};

    \foreach \x in {0.5,1.5,2.5,3.5} {
        \draw[gridline] (\x,0.4) -- (\x,3.2);
    }
    \foreach \y in {0.4,1.3,2.2,3.1} {
        \draw[gridline] (0.5,\y) -- (3.7,\y);
    }

    \fill[rectblock] (1.5,1.3) rectangle (2.5,2.2);

    \node at (2.1,2.75) {$Q\in \mathfrak Q_{\mathbf j}^{(2)}$};
    \node[align=center] at (2.0,-0.75)
    {\textbf{(a)} Dyadic product grid};
\end{scope}

\begin{scope}[shift={(5.1,1.5)}]
    \draw[-Latex,thick] (0,0) -- (1.7,0);
    \node[above] at (0.85,0.05) {$\Theta^{-1}$};
\end{scope}

\begin{scope}[shift={(7.8,0)}]
    \draw[axis] (0,0) -- (4.5,0) node[right] {$t_1$};
    \draw[axis] (0,0) -- (0,3.8) node[above] {$t_2$};

    \foreach \y in {0.4,1.3,2.2,3.1} {
        \draw[gridline] (0.4,\y) -- (4.0,\y);
    }

    \foreach \c in {-2.0,-1.0,0.0,1.0,2.0,3.0} {
        \draw[gridline] ({0.4+\c},0.4) -- ({3.1+\c},3.1);
    }

    \fill[shearblock] (1.8,1.3) -- (2.8,1.3) -- (3.7,2.2) -- (2.7,2.2) -- cycle;

    \node at (2.45,2.85) {$R=\Theta^{-1}(Q)\in \mathfrak R_{\mathbf j}^{(3)}$};
    \node[align=center] at (2.2,-0.75)
    {\textbf{(b)} Type ${\rm III}$ analytic dyadic shard};
\end{scope}

\end{tikzpicture}
\caption{The analytic Type ${\rm III}$ dyadic shards are obtained as inverse images of a genuine dyadic product grid under the measure-preserving shear $\Theta(\mathbf z,t_1,t_2)=(\mathbf z,t_1-t_2,t_2)$. Thus the rectified side is Cartesian, while the physical side is a slanted dyadic tiling by analytic shards.}
\label{fig:analytic_dyadic_shards}
\end{figure}

For each $k\in\{1,2,3\}$ and $\mathbf j\in\mathbb Z^3$, let
\[
\mathscr F_{\mathbf j}^{(k)}
:=
\sigma\big(\mathfrak R_{\mathbf j}^{(k)}\big)
\]
be the corresponding dyadic $\sigma$-algebra, and let
$E_{\mathbf j}^{(k)}$ denote the associated conditional expectation.

\begin{prop}[Dyadic filtrations on $\mathscr N$]
\label{prop:quotient_filtrations}~\\
For each $k\in\{1,2,3\}$, the family
$\{\mathscr F_{\mathbf j}^{(k)}\}_{\mathbf j\in\mathbb Z^3}$
forms a tri-parameter dyadic family of $\sigma$-algebras, increasing in each coordinate separately.
  In particular, if
$R_{\mathbf j}^{(k)}(\mathbf g)$ denotes the unique analytic shard of type $k$
and scale $\mathbf j$ containing $\mathbf g$, then
\begin{align*}
E_{\mathbf j}^{(k)}f(\mathbf g)
=
\frac{1}{|R_{\mathbf j}^{(k)}(\mathbf g)|}
\int_{R_{\mathbf j}^{(k)}(\mathbf g)} f(\mathbf h)\,d\mathbf h.
\end{align*}
\end{prop}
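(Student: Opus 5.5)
The plan is to reduce everything to standard dyadic product structures in rectified coordinates, and then transport the result through the shear $\Theta$ for $k=3$, exactly as in Proposition~\ref{prop:twisted_martingale}.

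\textbf{Step 1: atoms.} First I would record that for each $k$ and $\mathbf j$ the family $\mathfrak R_{\mathbf j}^{(k)}$ has three properties. It is countable, since it is indexed by products of lattices. Each element is measurable with $0<|R|<\infty$; for $k=3$ this uses that $\Theta$ is linear with determinant $1$, so $|\Theta^{-1}(Q)|=|Q|$. Finally, it is a partition of $\mathscr N$ up to null sets, by Proposition~\ref{prop:analytic_shards}(i). Consequently $\mathscr F_{\mathbf j}^{(k)}=\sigma(\mathfrak R_{\mathbf j}^{(k)})$ consists exactly of the countable unions of shards of scale $\mathbf j$, modulo null sets. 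The shards are therefore the atoms, and $R_{\mathbf j}^{(k)}(\mathbf g)$ is well defined for a.e.\ $\mathbf g$.

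\textbf{Step 2: monotonicity.} The $\sigma$-algebras are comparable whenever the scale indices are comparable coordinatewise. Because the side lengths are $2^{j_\mu}$ and $2^{2j_\mu+\kappa}$, refinement corresponds to decreasing indices, so "increasing" has to be read along the direction of refinement. I would fix the orientation by the convention implicit in Lemma~\ref{lem:analytic-nested}. Concretely, one must show that every coarse shard is a union of fine shards, equivalently that each fine shard lies in a unique coarse shard.

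For $k=1,2$ this follows from Lemma~\ref{lem:analytic-nested} together with the partition property. A fine shard lies in a unique coarse one, so each coarse shard is the union of the fine shards it contains, up to null sets. Hence the coarse $\sigma$-algebra is contained in the fine one.

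For $k=3$ I would argue via $\Theta^{-1}(\sigma(\mathfrak Q_{\mathbf j}^{(2)}))=\sigma(\Theta^{-1}\mathfrak Q_{\mathbf j}^{(2)})$. This holds because preimages under a bijection commute with the $\sigma$-algebra operations, and it transfers the inclusion directly.

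The tri-parameter nesting in the rectified coordinates reduces to two one-dimensional facts. Standard dyadic cubes of side $2^{j}$ nest as $j$ varies. Dyadic intervals of length $2^{2j+\kappa}$ also nest, since $2^{2j+\kappa}$ is again a power of $2$ and changing $j$ by one changes the length by a factor of $4$.

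\textbf{Step 3: conditional expectation formula.} Define $A f$ by the averaging formula in the statement. I would verify two things.
\begin{itemize}
\item[(a)] $Af$ is constant on each atom, hence $\mathscr F_{\mathbf j}^{(k)}$-measurable.
\item[(b)] For every atom $R$ one has $\int_R Af=\int_R f$. This extends to countable unions of atoms by dominated convergence for $f\in L^1_{loc}$, or more simply for $f\in L^1$ and then by localization.
\end{itemize}
Uniqueness of conditional expectation then gives $E_{\mathbf j}^{(k)}=A$. Alternatively, for $k=3$ one can write $E^{(3)}_{\mathbf j}=U E^{(2)}_{\mathbf j} U^*$ with $Uf=f\circ\Theta$, and repeat the change of variables from the proof of Proposition~\ref{prop:twisted_martingale}(i).

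\textbf{Main obstacle.} The only delicate point is Step~2, namely the orientation of the scale ordering and the verification that the central dyadic intervals of length $2^{2j+\kappa}$ genuinely nest. Everything else is bookkeeping. I would handle it by rewriting each family as a product of one-dimensional dyadic grids, where nesting is classical.
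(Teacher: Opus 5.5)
Your proposal is correct and follows the paper's route: standard product dyadic filtrations in rectified coordinates for $k=1,2$, pullback under $\Theta$ for $k=3$, and the averaging formula because the shards of scale $\mathbf j$ are exactly the atoms of $\mathscr F_{\mathbf j}^{(k)}$, with the measure-theoretic bookkeeping spelled out in more detail than the paper gives. On the orientation issue you flagged, do not adopt the convention of Lemma~\ref{lem:analytic-nested}, whose inclusion is reversed as printed: with side lengths $2^{j_\mu}$ and $2^{2j_\mu+\kappa}$, the one-dimensional dyadic facts you cite give, for $\mathbf j\le\mathbf j'$ coordinatewise, that each shard of scale $\mathbf j$ lies in a unique shard of scale $\mathbf j'$, so $\mathscr F_{\mathbf j'}^{(k)}\subseteq\mathscr F_{\mathbf j}^{(k)}$ and the family is increasing only as $\mathbf j$ decreases.
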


\begin{proof}
For $k=1,2$, the filtrations are standard product dyadic filtrations in the rectified coordinates. For $k=3$, the filtration is obtained by pulling back the type $2$ rectified filtration under the measure-preserving shear $\Theta^{-1}$. The averaging formula follows because the atoms of $\mathscr F_{\mathbf j}^{(k)}$ are precisely the analytic dyadic shards of type $k$ and scale $\mathbf j$.
\end{proof}

\begin{remark}
The point of Section \ref{sec:quotient_shards} is the following.
The raw projected shards coming from the Heisenberg quotient construction
provide the correct geometric scale and the correct comparison with the quotient
tubes.  The analytic dyadic shards are the exact dyadic objects used in the
discrete analysis.  They coincide with the raw geometry up to uniform
comparability, but they are now nested and exactly rectifiable, which is what
Section \ref{sec:nilpotent_haar} requires.
\end{remark}

\section{Twisted nilpotent Haar systems and tight frame representation}
\label{sec:nilpotent_haar}
\setcounter{equation}{0}

We now build the discrete Haar theory on the analytic dyadic shards introduced
in Section \ref{sec:quotient_shards}. 
 The analytic dyadic shards serve as the dyadic objects in the discrete theory on $\mathscr N$. They form exact dyadic systems for Haar expansions and martingale decompositions, and at the same time they remain uniformly comparable to the raw projected shards, hence to the quotient tube geometry coming from the continuous theory.

For $k=1,2$, the analytic dyadic systems are exact product systems in rectified coordinates. For $k=3$, the dyadic system is obtained as the inverse image of the type ${\rm II}$ rectified system under the measure-preserving shear
\[
\Theta(\mathbf z,t_1,t_2)=(\mathbf z,t_1-t_2,t_2).
\]
Thus the Haar theory on $\mathscr N$ is reduced to standard product Haar expansions in rectified coordinates together with the pullback induced by $\Theta$.

\begin{remark}
The raw projected shards are not used directly as Haar supports, because they do not form an exact multiscale dyadic system. Their role is geometric: they indicate the relevant quotient regimes and tube scales. The analytic dyadic shards are the exact dyadic sets on which the Haar and martingale theory is carried out, while remaining comparable to the raw projected geometry.
\end{remark}

\subsection{Rectified Haar systems}

For a dyadic cube $I\subset \mathbb C^{n_\mu}\cong \mathbb R^{2n_\mu}$, let
\[
\{h_I^{\epsilon_\mu}\}_{\epsilon_\mu\in \mathcal E_\mu},
\qquad
\mathcal E_\mu:=\{1,\dots,2^{2n_\mu}-1\},
\]
be the standard $L^2$-normalized Haar basis on $I$.

For a parabolic dyadic block
\[
Q_\mu = I_\mu\times J_\mu \subset \mathbb C^{n_\mu}\times\mathbb R,
\]
let
\[
\{h_{Q_\mu}^{\varepsilon_\mu}\}_{\varepsilon_\mu\in \mathcal E_\mu^\sharp},
\qquad
\mathcal E_\mu^\sharp:=\{1,\dots,2^{2n_\mu+1}-1\},
\]
be the standard tensor-product Haar basis on the product block $Q_\mu$.

\medskip
\noindent
\textbf{Type 1 rectified basis.}
If
\[
Q = Q_1\times Q_2\times I_3 \in \mathfrak Q_{\mathbf j}^{(1)},
\]
with $Q_1\in \mathscr Q_{j_1}^{(1)}$, $Q_2\in \mathscr Q_{j_2}^{(2)}$, and
$I_3\in \mathscr D_{j_3}^{(3)}$, then for
\[
\vec\varepsilon=(\varepsilon_1,\varepsilon_2,\varepsilon_3)
\in
\mathcal E_1^\sharp\times \mathcal E_2^\sharp\times \mathcal E_3
\]
we define
\begin{align}
h_Q^{\vec\varepsilon,1}
(\mathbf z_1,t_1,\mathbf z_2,t_2,\mathbf z_3)
:=
h_{Q_1}^{\varepsilon_1}(\mathbf z_1,t_1)\,
h_{Q_2}^{\varepsilon_2}(\mathbf z_2,t_2)\,
h_{I_3}^{\varepsilon_3}(\mathbf z_3).
\label{eq:type1_haar}
\end{align}

\medskip
\noindent
\textbf{Type 2 rectified basis.}
If
\[
Q = Q_1\times I_2\times Q_3 \in \mathfrak Q_{\mathbf j}^{(2)},
\]
with $Q_1\in \mathscr Q_{j_1}^{(1)}$, $I_2\in \mathscr D_{j_2}^{(2)}$, and
$Q_3\in \mathscr Q_{j_3}^{(3)}$, then for
\[
\vec\varepsilon=(\varepsilon_1,\varepsilon_2,\varepsilon_3)
\in
\mathcal E_1^\sharp\times \mathcal E_2\times \mathcal E_3^\sharp
\]
we define
\begin{align}
h_Q^{\vec\varepsilon,2}
(\mathbf z_1,t_1,\mathbf z_2,\mathbf z_3,t_2)
:=
h_{Q_1}^{\varepsilon_1}(\mathbf z_1,t_1)\,
h_{I_2}^{\varepsilon_2}(\mathbf z_2)\,
h_{Q_3}^{\varepsilon_3}(\mathbf z_3,t_2).
\label{eq:type2_haar}
\end{align}

The collections
\[
\mathcal H_{\mathrm{std}}^{(1)}
:=
\left\{h_Q^{\vec\varepsilon,1}:Q\in \mathfrak Q^{(1)},\,\vec\varepsilon\right\},
\qquad
\mathcal H_{\mathrm{std}}^{(2)}
:=
\left\{h_Q^{\vec\varepsilon,2}:Q\in \mathfrak Q^{(2)},\,\vec\varepsilon\right\}
\]
are complete orthonormal bases for $L^2(\mathscr N)$, because they are standard
tensor-product Haar bases on the product filtrations
$\mathfrak Q^{(1)}$ and $\mathfrak Q^{(2)}$   in the rectified coordinates.

\subsection{Twisted Haar systems on analytic shards}

Define
\[
\Theta_1:=\mathrm{Id},
\qquad
\Theta_2:=\mathrm{Id},
\qquad
\Theta_3:=\Theta,
\]
with $\Theta$ given by \eqref{eq:central_shear}, and let
\[
U_kf:=f\circ \Theta_k,
\qquad
k=1,2,3.
\]
Then $U_1$ and $U_2$ are the identity, while $U_3$ is unitary on
$L^2(\mathscr N)$ and an isometry on every $L^p(\mathscr N)$.

For $k=1,2$, the analytic dyadic shards are already product blocks, so we set
\[
\mathcal B^{(1)}:=\mathcal H_{\mathrm{std}}^{(1)},
\qquad
\mathcal B^{(2)}:=\mathcal H_{\mathrm{std}}^{(2)}.
\]
For the third family, if $R\in \mathfrak R^{(3)}$ and
$Q=\Theta(R)\in \mathfrak Q^{(2)}$, then for
$\vec\varepsilon\in \mathcal E_1^\sharp\times \mathcal E_2\times
\mathcal E_3^\sharp$ we define
\begin{align}
h_R^{\vec\varepsilon,3}(\mathbf g)
:=
h_Q^{\vec\varepsilon,2}(\Theta(\mathbf g))
=
U_3\big(h_Q^{\vec\varepsilon,2}\big)(\mathbf g).
\label{eq:type3_haar}
\end{align}
We set
\[
\mathcal B^{(3)}
:=
\left\{h_R^{\vec\varepsilon,3}:R\in \mathfrak R^{(3)},\ \vec\varepsilon\right\}.
\]

\begin{lem}\label{lem:nilpotent-unitary}
Let \(U_3f:=f\circ\Theta\), where \(\Theta(\mathbf z,t_1,t_2)=(\mathbf z,t_1-t_2,t_2)\). Then the pullback $U_3$ is unitary on $L^2(\mathscr N)$ and an isometry on $L^p(\mathscr N)$ for every $1\le p\le \infty$. Moreover, if $E_{\mathbf j}^{(2)}$ denotes the conditional expectation associated with the type $2$ rectified dyadic structure, then
\[
E_{\mathbf j}^{(3)}=U_3 E_{\mathbf j}^{(2)} U_3^*,
\qquad
\Delta_{\mathbf j}^{(3)}=U_3 \Delta_{\mathbf j}^{(2)} U_3^*.
\]
\end{lem}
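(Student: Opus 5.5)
The plan mirrors the Euclidean argument of Proposition \ref{prop:twisted_martingale}. First, $\Theta$ is a linear bijection of $\mathscr N\cong\mathbb R^{2(n_1+n_2+n_3)+2}$. It acts as the identity on $\mathbf z$ and as the matrix $\begin{pmatrix}1&-1\\0&1\end{pmatrix}$ on $(t_1,t_2)$. Its Jacobian determinant is therefore $1$, and the change of variables $\mathbf h=\Theta(\mathbf g)$ gives
\[
\|U_3f\|_p^p=\int_{\mathscr N}|f(\Theta\mathbf g)|^p\,d\mathbf g=\int_{\mathscr N}|f(\mathbf h)|^p\,d\mathbf h=\|f\|_p^p
\]
for $1\le p<\infty$. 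The case $p=\infty$ holds because $\Theta$ maps null sets to null sets. Since $U_3$ is an isometry on $L^2$ with inverse $f\mapsto f\circ\Theta^{-1}$, it is surjective and hence unitary, and $U_3^*=U_3^{-1}$ is given by $U_3^*f=f\circ\Theta^{-1}$. Here we use Haar measure on $\mathscr N$, which is Lebesgue measure in exponential coordinates.

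Next we prove the conjugation identity for conditional expectations. Fix $\mathbf j$ and $\mathbf g\in\mathscr N$, and let $Q\in\mathfrak Q^{(2)}_{\mathbf j}$ be the unique block containing $\Theta(\mathbf g)$. By \eqref{eq:analytic_family3}, $R=\Theta^{-1}(Q)$ is exactly the type $3$ shard $R^{(3)}_{\mathbf j}(\mathbf g)$. Applying the averaging formula of Proposition \ref{prop:quotient_filtrations}, we get
\[
U_3E^{(2)}_{\mathbf j}U_3^*f(\mathbf g)=\frac1{|Q|}\int_Q f(\Theta^{-1}\mathbf y)\,d\mathbf y=\frac1{|R|}\int_R f(\mathbf h)\,d\mathbf h=E^{(3)}_{\mathbf j}f(\mathbf g).
\]
This uses the substitution $\mathbf h=\Theta^{-1}\mathbf y$ together with $|R|=|Q|$. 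The identity is valid for $f\in L^1_{loc}$, and in particular for $f\in L^p$.

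Finally we turn to the martingale differences. The paper has not displayed $\Delta^{(k)}_{\mathbf j}$ on $\mathscr N$, so I define it by analogy with Proposition \ref{prop:twisted_martingale}(ii). It is the product over $\mu=1,2,3$ of the one-parameter differences $E^{(k)}_{\mathbf j-e_\mu}-E^{(k)}_{\mathbf j}$, where $e_\mu$ is the unit vector in $\mathbb Z^3$. The shift is $-e_\mu$ because finer scales in this setting correspond to smaller $j_\mu$. Any fixed finite combination of these expectations would work equally well in the argument.

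Each factor is conjugated by $U_3$ by the previous step. Since $U_3^*U_3=\mathrm{Id}$, the inner conjugations cancel in any product:
\[
U_3AU_3^*\,U_3BU_3^*=U_3ABU_3^*.
\]
Linearity handles the differences, and so $\Delta^{(3)}_{\mathbf j}=U_3\Delta^{(2)}_{\mathbf j}U_3^*$.

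The only point needing care is checking that the shard containing $\mathbf g$ corresponds to the block containing $\Theta(\mathbf g)$, including the half-open conventions. This is immediate from $R=\Theta^{-1}(Q)$, because $\Theta$ is a bijection, so $\mathbf g\in\Theta^{-1}(Q)$ if and only if $\Theta(\mathbf g)\in Q$.
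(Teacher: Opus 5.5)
Your proposal is correct and takes the same route as the paper. The paper's proof only notes that $\Theta$ is a measure-preserving linear bijection and that $\mathfrak R^{(3)}_{\mathbf j}=\Theta^{-1}(\mathfrak Q^{(2)}_{\mathbf j})$; you have written out the change of variables and the fact that conjugation by $U_3$ passes through sums and products of conditional expectations.

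One caveat concerns the formula $\prod_{\mu}(E^{(k)}_{\mathbf j-e_\mu}-E^{(k)}_{\mathbf j})$ that you adopt for $\Delta^{(k)}_{\mathbf j}$, which mirrors Proposition~\ref{prop:twisted_martingale}(ii): it is identically zero. Write $E^{(2)}_{\mathbf j}=P^1_{j_1}P^2_{j_2}P^3_{j_3}$ with commuting one-parameter dyadic averages $P^\mu$; then the first two factors compose to an operator containing $(P^1_{j_1-1}-P^1_{j_1})P^1_{j_1}=0$. You should instead use the inclusion--exclusion form $\Delta^{(k)}_{\mathbf j}=\sum_{\eta\in\{0,1\}^3}(-1)^{3-\eta_1-\eta_2-\eta_3}E^{(k)}_{\mathbf j-\eta}$, to which your conjugation argument applies verbatim.
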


\begin{proof}
Since $\Theta$ is a linear measure-preserving bijection, the pullback $U_3$ preserves all $L^p$ norms. The identities for conditional expectations and martingale differences follow from the fact that $\mathfrak R^{(3)}$ is the pullback of $\mathfrak Q^{(2)}$ under $\Theta^{-1}$.
\end{proof}

\begin{prop}[Properties of the twisted nilpotent Haar systems]
\label{prop:twisted_nilpotent_haar}~\\
The three collections $\mathcal B^{(1)}$, $\mathcal B^{(2)}$, and
$\mathcal B^{(3)}$ satisfy the following properties.

\begin{itemize}
    \item[(i)] Each $\mathcal B^{(k)}$ is a complete orthonormal basis of
    $L^2(\mathscr N)$.

    \item[(ii)] Every basis element is supported on its corresponding analytic
    dyadic shard.

    \item[(iii)] If $\Delta_{\mathbf j}^{(1)}$ and $\Delta_{\mathbf j}^{(2)}$
    denote the standard tri-parameter martingale difference operators
    associated with the filtrations generated by
    $\mathfrak Q_{\mathbf j}^{(1)}$ and $\mathfrak Q_{\mathbf j}^{(2)}$,
    respectively, and if
    \[
    \Delta_{\mathbf j}^{(3)}
    :=
    U_3\,\Delta_{\mathbf j}^{(2)}\,U_3^*,
    \]
    then for each $k\in\{1,2,3\}$,
    \begin{align}
    \Delta_{\mathbf j}^{(k)}f
    =
    \sum_{R\in \mathfrak R_{\mathbf j}^{(k)}}
    \ \sum_{\vec\varepsilon}
    \langle f,h_R^{\vec\varepsilon,k}\rangle\,
    h_R^{\vec\varepsilon,k},
    \label{eq:nilpotent_martingale_projection}
    \end{align}
    with convergence in $L^2(\mathscr N)$.
\end{itemize}
\end{prop}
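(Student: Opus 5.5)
The plan is to reduce everything to the rectified product systems $\mathcal H_{\mathrm{std}}^{(1)}$ and $\mathcal H_{\mathrm{std}}^{(2)}$ and then transport through the unitary $U_3$ of Lemma~\ref{lem:nilpotent-unitary}, exactly as in the Euclidean Proposition~\ref{prop:haar_properties} and Proposition~\ref{prop:twisted_martingale}.

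\emph{Types $1$ and $2$.} Here $\mathcal B^{(k)}=\mathcal H_{\mathrm{std}}^{(k)}$ for $k=1,2$. These are tensor products of three one-factor Haar bases: the parabolic-block Haar bases on $\mathbb C^{n_\mu}\times\mathbb R$ (indexed by $\mathcal E_\mu^\sharp$) and the spatial cube Haar basis (indexed by $\mathcal E_\mu$). I would first check that each factor is a complete orthonormal basis of $L^2$ of its factor space. For the parabolic blocks this uses that $\mathscr Q^{(\mu)}_{j}$ is nested and refines by a factor $2$ in the $2n_\mu$ spatial directions and by $2^2$ in the central direction. So I would order each refinement step $j\to j-1$ so that the $2^{2n_\mu+2}$ children generate a $(2^{2n_\mu+2}-1)$-dimensional difference space. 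This count should be reconciled with the index set $\mathcal E_\mu^\sharp$, possibly by viewing the blocks as products of a dyadic cube and a dyadic interval with separately indexed scales. Completeness then follows from $E_j\to\mathrm{Id}$ as $j\to-\infty$ and $E_j\to0$ as $j\to+\infty$ in $L^2$. Tensor products of complete orthonormal bases are complete orthonormal bases of $L^2$ of the product, which gives (i) for $k=1,2$. Statement (ii) for $k=1,2$ is immediate from the tensor form \eqref{eq:type1_haar}, \eqref{eq:type2_haar}.

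\emph{Type $3$.} By \eqref{eq:type3_haar}, $\mathcal B^{(3)}=U_3\mathcal H_{\mathrm{std}}^{(2)}$. Since $U_3$ is unitary (Lemma~\ref{lem:nilpotent-unitary}), orthonormality is preserved. Completeness follows as in Proposition~\ref{prop:haar_properties}: if $\langle f,U_3h\rangle=0$ for all $h$, then $U_3^*f\perp\mathcal H_{\mathrm{std}}^{(2)}$, so $f=0$. For the support, $\mathrm{supp}(h_Q^{\vec\varepsilon,2}\circ\Theta)=\Theta^{-1}(\mathrm{supp}\,h_Q^{\vec\varepsilon,2})\subseteq\Theta^{-1}(Q)=R$.

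\emph{Statement (iii).} For $k=1,2$ this is the standard fact that the tri-parameter martingale difference
\[
\Delta_{\mathbf j}=\prod_{\mu=1}^3\big(E_{\mathbf j+e_\mu}-E_{\mathbf j}\big)
\]
factors as a tensor product of one-parameter differences. Each one-parameter difference is the orthogonal projection onto the span of the Haar functions at that scale, so $\Delta_{\mathbf j}^{(k)}$ is the projection onto the span of $\{h_R^{\vec\varepsilon,k}:R\in\mathfrak R^{(k)}_{\mathbf j}\}$. The sign convention for which direction is finer has to match the indexing of $\mathscr D_{j_\mu}$ by side length $2^{j_\mu}$. For $k=3$, I would conjugate by $U_3$ as in \eqref{unitarykey}, using $\langle U_3^*f,h\rangle=\langle f,U_3h\rangle$ and $\Theta(\mathfrak R^{(3)}_{\mathbf j})=\mathfrak Q^{(2)}_{\mathbf j}$. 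Convergence in $L^2$ holds because each sum is an orthogonal expansion inside a closed subspace.

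I expect the main obstacle to be the first step: making sure the parabolic dyadic blocks form a genuine nested filtration with the right number of children per generation. The central interval length $2^{2j+\kappa}$ halves twice when $j$ drops by one, which does not match the index set $\mathcal E_\mu^\sharp$. I would settle this by treating the spatial and central dyadic parts as separate one-parameter factors, so that the product Haar basis is classical.
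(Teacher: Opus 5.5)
There is a genuine gap. It is the one you flagged yourself, and your proposed way around it does not close it. For reference, your outline is exactly the paper's. For $k=1,2$ the paper simply asserts that $\mathcal H^{(k)}_{\mathrm{std}}$ are ``the standard tensor-product Haar bases on the product filtrations''. For $k=3$ it does the same $U_3$ transport you describe. It never checks the count.

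\textbf{Why the system as defined fails.} A block $Q_\mu=I_\mu\times J_\mu\in\mathscr Q^{(\mu)}_{j_\mu}$ has $2^{2n_\mu}\cdot 4=2^{2n_\mu+2}$ children in $\mathscr Q^{(\mu)}_{j_\mu-1}$. The $2^{2n_\mu+1}-1$ functions $h^{\varepsilon}_{Q_\mu}$, $\varepsilon\in\mathcal E^\sharp_\mu$, are only constant on (children of $I_\mu$)$\times$(halves of $J_\mu$). Let $J_\mu^-$ be the left half of $J_\mu$ and set $g=\chi_{I_\mu}(\mathbf z_\mu)\,h_{J_\mu^-}(t)$. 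Then $g\neq 0$ and $g$ is orthogonal to every $h^{\varepsilon}_{Q'}$:
\begin{itemize}
\item for $Q'=Q_\mu$, because $h^{\varepsilon}_{Q_\mu}(\mathbf z_\mu,\cdot)$ is constant on $J_\mu^-$;
\item for $Q'\supsetneq Q_\mu$, because $h^{\varepsilon}_{Q'}$ is constant on $Q_\mu$ (its isotropic children have central length at least $2^{2j_\mu+\kappa+1}$) while $\int g=0$;
\item for $Q'\subsetneq Q_\mu$, because $Q'$ lies in a parabolic child of $Q_\mu$, on which $g$ is constant, and $h^{\varepsilon}_{Q'}$ has mean zero there;
\item otherwise by disjointness.
\end{itemize}
Tensoring $g$ with any function of the remaining variables gives a nonzero function orthogonal to $\mathcal H^{(k)}_{\mathrm{std}}$, and its $U_3$-image is orthogonal to $\mathcal B^{(3)}$. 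So with $\mathcal E^\sharp_\mu$ and the standard tensor-product Haar functions as defined, (i) fails for all three $k$. Statement (iii) fails by a dimension count: on a single shard, the range of $\Delta^{(k)}_{\mathbf j}$ has dimension $2^{2n_\mu+2}-1$ in each parabolic factor, more than the number of Haar functions attached to that shard.

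\textbf{Why your fix does not work, and what does.} In your first paragraph you had the right count, $2^{2n_\mu+2}-1$ per block. The conclusion to draw is that the index set must be enlarged, not that the blocks should be split into independent parameters. Treating $\mathbf z_\mu$ and $t_\mu$ as separate one-parameter factors does give a complete orthonormal basis, but a different one. Its elements $h^{\epsilon}_I(\mathbf z_\mu)h_J(t_\mu)$ have unrelated $\ell(I)$ and $|J|$. There are $2^{2n_\mu}-1$ of them per pair, not $2^{2n_\mu+1}-1$. They are not indexed by $\mathfrak Q^{(k)}$ and are not supported on analytic shards. Worse, functions with $\ell(I)=2^{j_\mu}$ and $|J|\ge 2^{2j_\mu+\kappa+1}$ lie in the range of the parabolic difference at level $j_\mu$. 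So $\Delta^{(k)}_{\mathbf j}$ is no longer a sum over $R\in\mathfrak R^{(k)}_{\mathbf j}$ of functions supported in $R$, and (ii)--(iii) are lost.

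The repair is to keep the parabolic filtration and replace $\mathcal E^\sharp_\mu$ by an index set of size $2^{2n_\mu+2}-1$. Take $\{h^{\varepsilon}_{Q_\mu}\}$ to be an orthonormal basis of the mean-zero functions on $Q_\mu$ that are constant on its $2^{2n_\mu+2}$ children. For example, use $\phi^{\epsilon}_{I_\mu}(\mathbf z_\mu)\psi^{\eta}_{J_\mu}(t)$ with $(\epsilon,\eta)\neq(0,0)$, where
\begin{itemize}
\item $\phi^0_I=|I|^{-1/2}\chi_I$ and $\phi^\epsilon_I=h^\epsilon_I$ for $\epsilon\in\mathcal E_\mu$;
\item $\psi^0_J,\dots,\psi^3_J$ are $|J|^{-1/2}\chi_J$, $h_J$, $h_{J^-}$, $h_{J^+}$, with $J^\pm$ the halves of $J$.
\end{itemize}
With this choice, your completeness argument ($E_j\to\mathrm{Id}$ and $E_j\to 0$), the tensorization, and the $U_3$ conjugation go through verbatim.

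\textbf{One further correction.} Your displayed formula $\prod_\mu(E_{\mathbf j+e_\mu}-E_{\mathbf j})$, read as a composition of full tri-parameter expectations $E_{\mathbf j}=E^1_{j_1}\otimes E^2_{j_2}\otimes E^3_{j_3}$, is not the tensor product of one-parameter differences. The same slip appears in the definition in Proposition~\ref{prop:twisted_martingale}. Once the shift is taken in the finer direction, the first tensor slot is $(E^1_{j_1-1}-E^1_{j_1})E^1_{j_1}=0$. Use instead
$\Delta_{\mathbf j}=\sum_{S\subseteq\{1,2,3\}}(-1)^{3-|S|}E_{\mathbf j-\sum_{\mu\in S}e_\mu}$, which is the composition of the partial differences acting in one factor at a time.
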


\begin{proof}

We first treat the type ${\rm I}$ and type ${\rm II}$ systems. 

By construction, the families
$\mathfrak R^{(1)}=\mathfrak Q^{(1)}$ and $\mathfrak R^{(2)}=\mathfrak Q^{(2)}$
are exact product dyadic systems in rectified coordinates. Hence
$\mathcal B^{(1)}=\mathcal H_{\mathrm{std}}^{(1)}$ and
$\mathcal B^{(2)}=\mathcal H_{\mathrm{std}}^{(2)}$ are precisely the standard tensor-product Haar bases on the corresponding product filtrations. In particular, both $\mathcal B^{(1)}$ and $\mathcal B^{(2)}$ are complete orthonormal bases of $L^2(\mathscr N)$, every basis element is supported on its corresponding analytic dyadic shard, and the martingale difference expansions in \eqref{eq:nilpotent_martingale_projection} are exactly the standard product Haar expansions for $k=1,2$.

We now turn to the type ${\rm III}$ system. Recall that $\mathfrak R^{(3)}$ is the pullback of $\mathfrak Q^{(2)}$ under the measure-preserving bijection $\Theta^{-1}$, and that
\[
h_R^{\vec\varepsilon,3}
=
U_3\big(h_Q^{\vec\varepsilon,2}\big)
=
h_Q^{\vec\varepsilon,2}\circ \Theta,
\qquad Q=\Theta(R).
\]
Let $R,R'\in \mathfrak R^{(3)}$, and let $Q=\Theta(R)$ and $Q'=\Theta(R')$ belong to $\mathfrak Q^{(2)}$. Since $\Theta$ is measure-preserving, we have
\begin{align*}
\langle h_R^{\vec\varepsilon,3}, h_{R'}^{\vec{\varepsilon'},3}\rangle
&=
\int_{\mathscr N}
h_Q^{\vec\varepsilon,2}(\Theta(\mathbf g))
\overline{h_{Q'}^{\vec{\varepsilon'},2}(\Theta(\mathbf g))}
\,d\mathbf g \\
&=
\int_{\mathscr N}
h_Q^{\vec\varepsilon,2}(\mathbf y)
\overline{h_{Q'}^{\vec{\varepsilon'},2}(\mathbf y)}
\,d\mathbf y \\
&=
\delta_{Q,Q'}\delta_{\vec\varepsilon,\vec{\varepsilon'}}.
\end{align*}
Thus $\mathcal B^{(3)}$ is orthonormal. Since it is the unitary image under $U_3$ of the complete orthonormal basis $\mathcal B^{(2)}$, it is also complete in $L^2(\mathscr N)$. Moreover,
\[
\mathrm{supp}\big(h_R^{\vec\varepsilon,3}\big)
=
\Theta^{-1}\big(\mathrm{supp}(h_Q^{\vec\varepsilon,2})\big)
=
\Theta^{-1}(Q)
=
R,
\]
so each type ${\rm III}$ Haar function is supported on its corresponding analytic dyadic shard.

It remains to verify the martingale projection identity for $k=3$. By Lemma~\ref{lem:nilpotent-unitary},
\[
\Delta_{\mathbf j}^{(3)}=U_3\,\Delta_{\mathbf j}^{(2)}\,U_3^*.
\]
Applying the standard product Haar expansion for $\Delta_{\mathbf j}^{(2)}$ and then conjugating by $U_3$, we obtain
\begin{align*}
\Delta_{\mathbf j}^{(3)}f
&=
U_3\Bigg(
\sum_{Q\in \mathfrak Q_{\mathbf j}^{(2)}}
\sum_{\vec\varepsilon}
\langle U_3^*f,h_Q^{\vec\varepsilon,2}\rangle\,
h_Q^{\vec\varepsilon,2}
\Bigg) \\
&=
\sum_{Q\in \mathfrak Q_{\mathbf j}^{(2)}}
\sum_{\vec\varepsilon}
\langle f,U_3h_Q^{\vec\varepsilon,2}\rangle\,
U_3h_Q^{\vec\varepsilon,2}.
\end{align*}
Since $U_3h_Q^{\vec\varepsilon,2}=h_R^{\vec\varepsilon,3}$ with $R=\Theta^{-1}(Q)$, we have
\begin{align*}
\Delta_{\mathbf j}^{(3)}f
=
\sum_{R\in \mathfrak R_{\mathbf j}^{(3)}}
\sum_{\vec\varepsilon}
\langle f,h_R^{\vec\varepsilon,3}\rangle\,
h_R^{\vec\varepsilon,3}.
\end{align*}
Combining this with the established product expansions for $k=1,2$ proves \eqref{eq:nilpotent_martingale_projection} for all $k\in\{1,2,3\}$, and completes the proof.
\end{proof}

Observe that the three families $\mathcal B^{(1)}$, $\mathcal B^{(2)}$, and $\mathcal B^{(3)}$ are three distinct complete orthonormal bases of $L^2(\mathscr N)$. Accordingly, their union is not a Parseval frame but a tight frame with frame bound $3$.

\begin{thm}[Twisted nilpotent Haar tight frame]
\label{thm:nilpotent_tight_frame}~\\
The union
$\mathcal B^{(1)}\cup \mathcal B^{(2)}\cup \mathcal B^{(3)}$
is a tight frame for $L^2(\mathscr N)$ with frame bound $3$.  Equivalently,
for every $f\in L^2(\mathscr N)$,
\begin{align}
\sum_{k=1}^3
\ \sum_{R\in \mathfrak R^{(k)}}
\ \sum_{\vec\varepsilon}
\big|\langle f,h_R^{\vec\varepsilon,k}\rangle\big|^2
=
3\|f\|_{L^2(\mathscr N)}^2,
\label{eq:nilpotent_frame_energy}
\end{align}
and
\begin{align}
f
=
\frac{1}{3}
\sum_{k=1}^3
\ \sum_{R\in \mathfrak R^{(k)}}
\ \sum_{\vec\varepsilon}
\langle f,h_R^{\vec\varepsilon,k}\rangle\,h_R^{\vec\varepsilon,k}
\label{eq:nilpotent_frame_reconstruction}
\end{align}
with unconditional convergence in $L^2(\mathscr N)$.
\end{thm}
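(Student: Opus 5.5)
The plan is to reduce everything to Proposition~\ref{prop:twisted_nilpotent_haar}(i), which says that each of $\mathcal B^{(1)}$, $\mathcal B^{(2)}$, $\mathcal B^{(3)}$ is a complete orthonormal basis of $L^2(\mathscr N)$. This is the same argument as in Corollary~\ref{thm:haar_frame} for the Euclidean case.

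\textbf{Step 1 (energy identity).} For each fixed $k$, Parseval's identity for the orthonormal basis $\mathcal B^{(k)}$ gives
\[
\sum_{R\in\mathfrak R^{(k)}}\sum_{\vec\varepsilon}|\langle f,h_R^{\vec\varepsilon,k}\rangle|^2=\|f\|_{L^2(\mathscr N)}^2 .
\]
Note that the indexing by $R\in\mathfrak R^{(k)}=\bigcup_{\mathbf j}\mathfrak R^{(k)}_{\mathbf j}$ and $\vec\varepsilon$ enumerates $\mathcal B^{(k)}$ without repetition. This holds because each shard $R$ has a unique scale $\mathbf j$, since its spatial side lengths determine $\mathbf j$. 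Summing the three identities over $k=1,2,3$ gives \eqref{eq:nilpotent_frame_energy}. Hence the union is a tight frame with bound $3$.

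\textbf{Step 2 (reconstruction).} For each $k$, the orthonormal expansion
\[
f=\sum_{R\in\mathfrak R^{(k)}}\sum_{\vec\varepsilon}\langle f,h_R^{\vec\varepsilon,k}\rangle h_R^{\vec\varepsilon,k}
\]
converges unconditionally in $L^2(\mathscr N)$. This is standard for orthonormal bases: the partial sums over any finite set $F$ satisfy
\[
\Big\|f-\sum_F\cdots\Big\|_2^2=\sum_{\notin F}|\langle f,h\rangle|^2,
\]
which tends to zero along the net of finite subsets. Averaging the three expansions with weight $\tfrac13$ yields \eqref{eq:nilpotent_frame_reconstruction}. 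Unconditional convergence of the combined triple sum follows because a finite union of unconditionally convergent series is again unconditionally convergent: one applies the finite-set estimate to each family separately. Alternatively, the frame operator $Sf=\sum\langle f,h\rangle h$ equals $3\,\mathrm{Id}$ by polarization of Step 1, and this gives the same formula.

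\textbf{Main point to check.} There is no real obstacle; the only subtle point is bookkeeping. The union should be read as a multiset or indexed family, because the three bases could share elements. For example, $\mathcal B^{(2)}$ and $\mathcal B^{(3)}$ may coincide on functions whose central Haar factors are invariant under the shear. If such elements were counted only once, the frame bound $3$ would fail. The statement should therefore be read with the index set $\bigsqcup_k\{(R,\vec\varepsilon,k)\}$, as the formulas themselves indicate.
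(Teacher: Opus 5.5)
Your argument is the paper's own proof: Parseval's identity for each of the three complete orthonormal bases from Proposition~\ref{prop:twisted_nilpotent_haar}(i), summed over $k$, then the standard tight-frame reconstruction, and your treatment of unconditional convergence and of the union as an indexed family is slightly more careful than the paper's. One correction to your example: $\mathcal B^{(3)}$ is in fact disjoint from $\mathcal B^{(1)}\cup\mathcal B^{(2)}$, because each of its elements has constant nonzero modulus a.e.\ on $\Theta^{-1}(Q)$, whose central sections are parallelograms rather than rectangles; the actual coincidences are between $\mathcal B^{(1)}$ and $\mathcal B^{(2)}$ at scales with $j_2=j_3$ (for instance $h_{Q_1}^{\varepsilon_1}(\mathbf z_1,t_1)\,h_{I_2}^{\epsilon_2}(\mathbf z_2)\,h_{I_3}^{\epsilon_3}(\mathbf z_3)\,h_J(t_2)$, with $J$ a dyadic interval of length $2^{2j_2+\kappa}$, lies in both), so the indexed-family reading you insist on is indeed necessary.
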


\begin{proof}
By Proposition \ref{prop:twisted_nilpotent_haar}, each $\mathcal B^{(k)}$ is a
complete orthonormal basis of $L^2(\mathscr N)$.  Therefore Parseval's identity
gives
\[
\sum_{R\in \mathfrak R^{(k)}}
\ \sum_{\vec\varepsilon}
\big|\langle f,h_R^{\vec\varepsilon,k}\rangle\big|^2
=
\|f\|_{L^2(\mathscr N)}^2
\]
for each fixed $k$.  Summing over $k=1,2,3$ yields
\eqref{eq:nilpotent_frame_energy}, and the reconstruction formula
\eqref{eq:nilpotent_frame_reconstruction} is the standard formula for a tight
frame with frame bound $3$.
\end{proof}

\subsection{Twisted dyadic square functions}

For $k=1,2,3$, define the dyadic square function by
\begin{align}
S_d^{(k)}(f)(\mathbf g)
:=
\left(
\sum_{\mathbf j\in\mathbb Z^3}
\big|\Delta_{\mathbf j}^{(k)}f(\mathbf g)\big|^2
\right)^{1/2}.
\label{eq:nilpotent_square_function}
\end{align}
Equivalently, by \eqref{eq:nilpotent_martingale_projection},
\begin{align}
S_d^{(k)}(f)(\mathbf g)
=
\left(
\sum_{\mathbf j\in\mathbb Z^3}
\ \sum_{R\in \mathfrak R_{\mathbf j}^{(k)}}
\ \sum_{\vec\varepsilon}
\big|
\langle f,h_R^{\vec\varepsilon,k}\rangle\,
h_R^{\vec\varepsilon,k}(\mathbf g)
\big|^2
\right)^{1/2}.
\label{eq:nilpotent_square_function_coeff}
\end{align}

\begin{remark}
For $k=1,2$, the square functions $S_d^{(k)}$ are the standard dyadic square functions associated with exact product filtrations in rectified coordinates. Thus the usual product Littlewood--Paley theorem applies directly. The only genuinely twisted case is $k=3$, which is handled by conjugation through the measure-preserving shear $\Theta$.
\end{remark}

\begin{thm}[Dyadic Littlewood--Paley estimates on $\mathscr N$]
\label{thm:nilpotent_square_function_transfer}~\\
For every $1<p<\infty$ and every $k\in\{1,2,3\}$,
\begin{align}
\|S_d^{(k)}(f)\|_{L^p(\mathscr N)}
\simeq
\|f\|_{L^p(\mathscr N)}.
\label{eq:nilpotent_lp_equivalence}
\end{align}
The implicit constants depend only on $p$ and the dimensions.
\end{thm}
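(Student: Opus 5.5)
The plan is to reduce everything to the classical multi-parameter dyadic Littlewood--Paley theorem in rectified coordinates, and then to transfer the estimate to the twisted case $k=3$ through the measure-preserving shear $\Theta$, exactly as in Lemma~\ref{lem:dyadic_intertwine} and Theorem~\ref{thm:twisted_dyadic_LP}.

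\textbf{Cases $k=1,2$.} The filtrations generated by $\mathfrak Q^{(1)}_{\mathbf j}$ and $\mathfrak Q^{(2)}_{\mathbf j}$ are genuine tensor products of three one-parameter dyadic filtrations on $\mathbb C^{n_1}\times\mathbb R$, $\mathbb C^{n_2}\times\mathbb R$ (or $\mathbb C^{n_2}$), and $\mathbb C^{n_3}$ (or $\mathbb C^{n_3}\times\mathbb R$). The dyadic structure on each factor is slightly nonstandard: it uses a parabolic block $I_\mu\times J_\mu$ of sizes $2^{j_\mu}$ and $2^{2j_\mu+\kappa}$. I would note that each such factor is still a regular one-parameter dyadic filtration. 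Passing from scale $j_\mu$ to $j_\mu-1$ splits each block into $2^{2n_\mu}\cdot 4$ children of equal measure, so the filtration is doubling with a uniform constant.

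Hence Burkholder's martingale square function theorem applies on each factor. Iterating it in the product setting gives
\[
\|S_d^{(k)}f\|_p\simeq\|f\|_p, \qquad k=1,2.
\]
The iteration goes through the vector-valued (Hilbert-space-valued) Burkholder inequality applied one parameter at a time, in the standard Fefferman--Stein bi-parameter fashion. For three parameters the argument is identical, with constants depending only on $p$ and the dimensions. One could even absorb the parameter ordering: the index $\mathbf j$ ranges over all of $\mathbb Z^3$, so no regime restriction enters here.

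\textbf{Case $k=3$.} By Lemma~\ref{lem:nilpotent-unitary},
\[
\Delta^{(3)}_{\mathbf j}=U_3\Delta^{(2)}_{\mathbf j}U_3^*,
\qquad\text{so}\qquad
\Delta^{(3)}_{\mathbf j}f(\mathbf g)=\big(\Delta^{(2)}_{\mathbf j}(U_3^*f)\big)(\Theta\mathbf g).
\]
Summing squares over $\mathbf j$ gives the pointwise identity
\[
S_d^{(3)}(f)(\mathbf g)=S_d^{(2)}(U_3^*f)(\Theta\mathbf g),
\]
which is the analogue of Lemma~\ref{lem:dyadic_intertwine}. Since $\Theta$ is a linear bijection of determinant one, changing variables yields
\[
\|S_d^{(3)}f\|_p=\|S_d^{(2)}(U_3^*f)\|_p\simeq\|U_3^*f\|_p=\|f\|_p,
\]
where the last equality uses that $U_3^*f=f\circ\Theta^{-1}$ is also an $L^p$ isometry.

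\textbf{Main obstacle.} The only nonformal step is the three-parameter product Littlewood--Paley theorem for the parabolic filtrations in the cases $k=1,2$. I would handle it by first proving the lower bound $\|f\|_p\lesssim\|S_d f\|_p$ by duality from the upper bound, using the $L^2$ orthogonality in Proposition~\ref{prop:twisted_nilpotent_haar}. For the upper bound, I would apply Khintchine's inequality to random signs $\sum\pm\Delta_{\mathbf j}$ together with the iterated one-parameter martingale transform bounds. This avoids any issue with the mixed dimensions, since each factor's martingale transform is bounded on $L^p$ uniformly in the factor dimension structure.
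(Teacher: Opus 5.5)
Your proposal is correct and follows the paper's proof. The cases $k=1,2$ are the classical product Littlewood--Paley theorem for the rectified product filtrations, and $k=3$ is transferred through the pointwise identity $S_d^{(3)}(f)(\mathbf g)=S_d^{(2)}(U_3^*f)(\Theta\mathbf g)$ together with the $L^p$-isometry of $U_3$ and $U_3^*$; your extra sketch of the three-parameter theorem for the parabolic factors (which the paper simply cites) is fine, provided the Khintchine signs are taken in product form $\epsilon^{(1)}_{j_1}\epsilon^{(2)}_{j_2}\epsilon^{(3)}_{j_3}$ so that the transform factors into one-parameter martingale transforms bounded by Burkholder's theorem.
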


\begin{proof}
For $k=1$ and $k=2$, the systems $\mathfrak Q^{(1)}$ and $\mathfrak Q^{(2)}$
are standard product dyadic filtrations on the product spaces
\[
(\mathbb C^{n_1}\times \mathbb R)\times
(\mathbb C^{n_2}\times \mathbb R)\times
\mathbb C^{n_3}
\]
and
\[
(\mathbb C^{n_1}\times \mathbb R)\times
\mathbb C^{n_2}\times
(\mathbb C^{n_3}\times \mathbb R),
\]
respectively.  Hence the classical product Littlewood--Paley theorem yields
\[
\|S_d^{(1)}(f)\|_{L^p(\mathscr N)}
\simeq
\|f\|_{L^p(\mathscr N)},
\qquad
\|S_d^{(2)}(f)\|_{L^p(\mathscr N)}
\simeq
\|f\|_{L^p(\mathscr N)}.
\]

For $k=3$, the martingale difference operators satisfy
\[
\Delta_{\mathbf j}^{(3)}
=
U_3\,\Delta_{\mathbf j}^{(2)}\,U_3^*.
\]
Therefore
\begin{align*}
S_d^{(3)}(f)(\mathbf g)
&=
\left(
\sum_{\mathbf j\in\mathbb Z^3}
\big|
U_3\Delta_{\mathbf j}^{(2)}U_3^*f(\mathbf g)
\big|^2
\right)^{1/2}
\\
&=
\left(
\sum_{\mathbf j\in\mathbb Z^3}
\big|
\Delta_{\mathbf j}^{(2)}(U_3^*f)(\Theta(\mathbf g))
\big|^2
\right)^{1/2}
\\
&=
S_d^{(2)}(U_3^*f)(\Theta(\mathbf g))
\\
&=
U_3\big(S_d^{(2)}(U_3^*f)\big)(\mathbf g).
\end{align*}
Since $U_3$ and $U_3^*$ are isometries on every $L^p(\mathscr N)$,
\begin{align*}
\|S_d^{(3)}(f)\|_{L^p(\mathscr N)}
&=
\|S_d^{(2)}(U_3^*f)\|_{L^p(\mathscr N)}
\\
&\simeq
\|U_3^*f\|_{L^p(\mathscr N)}
=
\|f\|_{L^p(\mathscr N)}.
\end{align*}
This proves \eqref{eq:nilpotent_lp_equivalence}.
\end{proof}

\begin{remark}
The dyadic geometry of $\mathscr N$ splits into three model families. The first two are product-type systems, while the third is obtained as the inverse image of a product-type system under the central shear
\[
\Theta(\mathbf z,t_1,t_2)=(\mathbf z,t_1-t_2,t_2).
\]
These analytic dyadic families remain uniformly comparable to the corresponding quotient tube geometries, and they provide the dyadic setting for the Haar and Littlewood--Paley theory in this paper.
\end{remark}

\vskip 1cm

\noindent
{\bf Acknowledgments}:\   J. Li is supported by Australian Research Council (DP 220100285 and DP 260100485). C.W. Liang is supported by NSTC through grant 111-2115-M-002-010-MY5. B. Wick is supported
 in part by National Science Foundation
DMS awards (No. 2349868) and Australian Research Council (DP 220100285 and DP260201083).
L. Wu is supported by  NNSF of China (No. 12201002). 
Q. Wu is supported by NNSF of China (No. 12171221) and Taishan Scholars Program for Young Experts of Shandong Province (tsqn202507265).


\vskip 1cm


\begin{thebibliography}{99}

\bibitem{CF80}
{\sc S.-Y. A. Chang and R. Fefferman,}
\newblock A continuous version of duality of $H^1$ with BMO on the bidisc.
\newblock {\em Ann. of Math. 112}, 1 (1980), 179--201.

\bibitem{CF85}
{\sc S.-Y. A. Chang and R. Fefferman,}
\newblock Some recent developments in Fourier analysis and $H^p$-theory on product domains.
\newblock {\em Bull. Amer. Math. Soc. 12}, 1 (1985), 1--43.

\bibitem{CCLLO}
{\sc  P. Chen, M. Cowling, M.-Y.Lee, J. Li and A. Ottazzi, }
\newblock Flag Hardy space theory on Heisenberg groups and applications.
\newblock {\em  arXiv:2102.07371\/} (2021).

\bibitem{Feff-S}
{\sc R. Fefferman and E. M. Stein, }
\newblock Singular integrals on product spaces.
\newblock {\em Adv. in Math. 45}, 2 (1982), 117--143.

\bibitem{FLLWW}
{\sc Z. Fu, J. Li, C.W. Liang, W. Wang and Q. Wu,}
\newblock Twisted Multiparameter singular integrals---real variable methods and applications, I.
\newblock {\em arXiv:2603.26119} (2026).

\bibitem{HLLW}
{\sc Y. S. Han,  M. Y. Lee, J. Li and B. Wick,}
\newblock Maximal function, Littlewood--Paley theory, Riesz transform and atomic decomposition in the multi-parameter flag setting.
\newblock {\em Mem. Amer. Math. Soc. 279}, 1373 (2022).

\bibitem{J}
{\sc L. Journ\'e,}
\newblock A covering lemma for product spaces.
\newblock {\em Proc. Amer. Math. Soc. 96}, 4 (1986), 593--598.


\bibitem{KLPW} {\sc A. Kairema, J. Li, C. Pereyra and L.A. Ward,}  \newblock Haar bases on
quasi-metric measure spaces, and dyadic structure theorems for function spaces on
product spaces of homogeneous type. \newblock {\em  J. Funct. Anal. 271}   (2016), no. 7,  1793--1843.


\bibitem{LLWW}
{\sc J. Li, C.-W. Liang, C.J. Wen and Q. Wu,}
\newblock Maximal functions with twisted structures, distribution  inequality and applications.
\newblock {\em arXiv:2604.00345} (2026).

\bibitem{MRS}
{\sc D. M\"uller, F. Ricci and E.M. Stein,}
\newblock Marcinkiewicz multipliers and multi-parameter structure on Heisenberg (-type) groups I.
\newblock {\em Invent. Math. 119}, 1 (1995), 199--233.

\bibitem{NRS}
{\sc A. Nagel, F. Ricci and E. M. Stein,}
\newblock Singular integrals with flag kernels and analysis on quadratic CR manifolds.
\newblock {\em J. Funct. Anal. 181}, 1 (2001), 29--118.

\bibitem{NRSW1}
{\sc A. Nagel, F. Ricci, E. M. Stein and S. Wainger,}
\newblock Singular integrals with flag kernels on homogeneous groups: I.
\newblock {\em Rev. Mat. Iberoam. 28}, 3 (2012), 631--722.

\bibitem{NRSW2}
{\sc A. Nagel, F. Ricci, E. M. Stein and S. Wainger,}
\newblock Algebras of singular integral operators with kernels controlled by multiple norms.
\newblock {\em Mem. Amer. Math. Soc. 256}, 1230 (2018).

\bibitem{P}
{\sc J. Pipher,}
\newblock Journ\'e's covering lemma and its extension to higher dimensions.
\newblock {\em Duke Math. J. 53}, 3 (1986), 683--690.

\bibitem{St98}
{\sc E. M. Stein,}
\newblock Singular integrals: the roles of Calder\'on and Zygmund.
\newblock {\em Notices Amer. Math. Soc. 45}, 9 (1998), 1130--1140.

\bibitem{St}
{\sc R. Strichartz,}
\newblock Self-similarity on nilpotent Lie groups.
\newblock In {\em Generalized convex bodies and generalized envelopes}, Contemp. Math. 140. Amer. Math. Soc., Providence, RI, 1992, pp. 123--157.

\bibitem{Ty}
{\sc J. Tyson,}
\newblock Global conformal Assouad dimension in the Heisenberg group.
\newblock {\em Conf. Geom. Dyn. 12\/} (2008), 32--57.

\bibitem{MR4808333}
{\sc W. Wang and Q.~Y. Wu,} \newblock Flag-like singular integrals and associated Hardy spaces on a kind of nilpotent Lie groups of step two.
\newblock {\em J. Geom. Anal. 34\/}  (2024), no.~12, Paper No. 373, 51 pp.

\end{thebibliography}
\end{document}